\documentclass[a4paper, 11pt]{amsart}
\usepackage{subcaption}

\usepackage{amsmath}
\usepackage{amssymb}
\usepackage{epsfig}
\usepackage{eepic}
\usepackage{latexsym
}

\usepackage{dsfont}
\newtheorem{theorem}{Theorem}[section]
\newtheorem{proposition}[theorem]{Proposition}
\newtheorem{lemma}[theorem]{Lemma}

\newtheorem{definition}[theorem]{Definition}

\newtheorem{assumption}[theorem]{Assumption}

\newtheorem{remark}{Remark}[section]



\usepackage[linesnumbered,commentsnumbered,ruled,vlined]{algorithm2e}



\newcommand{\bbE}{\mathbb{E}}
\newcommand{\bbR}{\mathbb{R}}

\newcommand{\bu}{{\bf u}}
\newcommand{\F}{{\bf F}}
\newcommand{\bI}{{\bf I}}
\newcommand{\bS}{{\bf S}}
\newcommand{\bA}{{\bf A}}

\newcommand{\T}{\mathcal{T}}

\newcommand{\bsve}{{\boldsymbol \varepsilon}}

\def \ve{\varepsilon}
\def\re{{\rm e}}
\def\rg{{\rm g}}
\def\bbf{{\bf f}}

\def\la{{\langle}}
\def\ra{{\rangle}}

\usepackage[top=1.9 cm,bottom=1.9 cm,left=1.3 cm, right=1.3 cm]{geometry}

\def\XXint#1#2#3{{\setbox0=\hbox{$#1{#2#3}{\int}$}
\vcenter{\hbox{$#2#3$}}\kern-.87\wd0}}

\def\XXiint#1#2#3{{\setbox0=\hbox{$#1{#2#3}{\int}$}
\vcenter{\hbox{$#2#3$}}\kern-1.05\wd0}}

\def\XXintt#1#2#3{{\setbox0=\hbox{$#1{#2#3}{\int}$}
\vcenter{\hbox{$#2#3$}}\kern-.72\wd0}}

\def\Xinttt#1{\mathchoice
{\XXinttt\displaystyle\textstyle{#1}}%
{\XXinttt\textstyle\scriptstyle{#1}}%
{\XXinttt\scriptstyle\scriptscriptstyle{#1}}%
{\XXinttt\scriptscriptstyle\scriptscriptstyle{#1}}%
\!\int}
\def\XXinttt#1#2#3{{\setbox0=\hbox{$#1{#2#3}{\int}$}
\vcenter{\hbox{$#2#3$}}\kern-.52\wd0}}

\def\XXintttr#1#2#3{{\setbox0=\hbox{$#1{#2#3}{\int}$}
\vcenter{\hbox{$#2#3$}}\kern-.6\wd0}}

\def\XXintttt#1#2#3{{\setbox0=\hbox{$#1{#2#3}{\int}$}
\vcenter{\hbox{$#2#3$}}\kern-.78\wd0}}

\def\ddashinttt{\Xinttt-}

\begin{document}

\title{Multiscale modelling and analysis of growth of plant tissues}
\author[A. Boudaoud, A. Kiss,  M. Ptashnyk]{{Arezki Boudaoud, Annamaria Kiss, Mariya Ptashnyk }
 \medskip \medskip\\
   {\tiny LadHyX, CNRS, Ecole Polytechnique, IP Paris, Palaiseau, France} \\
   {\tiny RDP, ENS de Lyon,  Claude Bernard University Lyon 1, CNRS, INRAE, Lyon, France} \\
   {\tiny Department of Mathematics, Heriot-Watt University, The Maxwell Institute for Mathematical Sciences, Edinburgh, Scotland, UK}
  }
\date{}

\begin{abstract}
  How morphogenesis depends on cell properties is an active direction of research. Here, we focus on mechanical models of growing plant tissues, where microscopic (sub)cellular structure is taken into account. In order to establish links between microscopic and macroscopic tissue properties, we perform a multiscale analysis of a model of growing plant tissue with subcellular resolution. We use homogenization to rigorously derive the corresponding macroscopic tissue scale model. Tissue scale mechanical properties are computed from  microscopic structural and material properties, taking into account deformation by the growth field. We then consider case studies and numerically compare the detailed microscopic model and the tissue-scale model, both implemented using finite element method. We find that the macroscopic model can be used to efficiently make predictions about several configurations of interest. Our work will help making links between microscopic measurements and macroscopic observations in growing tissues.
\end{abstract}

\maketitle
%
%
%

\section{Introduction}
Modelling plant growth and morphogenesis is an active area of research~\cite{Smithers_2019}. A major difficulty in this area is that growth is `inherently a multiscale process' \cite{Smithers_2019}, bridging subcellular to organ scales. Each plant cell is surrounded by a thin layer of polysaccharides, known as the cell wall, and exerts on this wall a hydrodynamic pressure, termed turgor pressure. Plant cell growth is driven by turgor pressure and restrained by the cell wall. Organ morphogenesis results from specific spatial distributions of growth rates across cells and tissues. As a consequence, the form of an organ is dependent on all processes occurring from sub-wall scale to supra-cellular scale, raising the need for multiscale modelling approaches.

Most previous modelling effort has focused on one scale or one process at a time. For instance, several studies have addressed how cell wall rheology emerges from its composition, microstructure, and/or synthesis~\cite{Bozorg_2014, Dyson_2012, Dyson_2010, Huang_20152go, Oliveri}. These studies adopted approaches from continuum mechanics and used partial differential equations (PDEs) to describe the cell wall as an anisotropic material, with different assumptions on the rheology -- viscous~\cite{Dyson_2012,Dyson_2010}, elastic~\cite{Bozorg_2014, Bozorg_2016, Oliveri, Ptashnyk_Seguin_2016}, or viscoelastoplastic~\cite{Huang_20152go}; growth then corresponds to flow of the viscous material or to remodelling of the elastic material, combined with synthesis of new material. Other studies represented tissues as tessellations by polygons of 2D space or of surfaces embedded in 3D that  describe the position of vertices, relying on high-dimensional systems of ordinary differential equations~\cite{Bozorg_2014, Corson_2009, Hamant_2008, Khadka_2019, Merks_2011}. The authors investigated how tissue shape changes according to cell wall rheology, assuming for instance each edge to be a viscoelastoplastic element, or to how mechanical stress in tissues feeds back on cell wall dynamics. Other studies used continuous, PDE-based approaches for modelling tissue dynamics and investigated how patterns of tissue mechanical properties or of growth potential yield organ morphogenesis~\cite{Hervieux,Vandiver}, see the comprehensive overview in~\cite{Goriely_2017}.

In this context, a systematic derivation of macroscopic (supra-cellular) tissue rheology from (sub)wall rheology is still lacking.  Here we try to address this issue using multiscale modelling and homogenization techniques. There are many results on homogenization of equations of linear elasticity, e.g.~\cite{Allaire_2002, CD, JKO,    oliveira_asymptotic_2009, pinho-da-cruz_asymptotic_2009, SP_1980}, however to our knowledge the multiscale analysis of the two-way coupling between elastic deformation and growth presented in here is novel.

In the derivation of the microscopic model we start from the framework of morphoelasticity~\cite{Goriely_2007, Rodriguez}. Following \cite{Rodriguez}, we consider the multiplicative decomposition of the deformation gradient into elastic and growth part and model elastic deformations of plant cell walls using equations of linearised elasticity. To describe the growth we use Lockhart's law~\cite{Lockhart} that relates plant cell wall growth  with deformation gradient and  accounts for microscopic subwall properties.
Along with modelling growth processes, the multiplicative decomposition approach is also used in multiscale modelling and analysis of plastic deformations, where the  decomposition  corresponds to the elastic and plastic deformations respectively, e.g.~\cite{DGP_2022, MS_2013, Neff_2005}.
Rigorous well-posedness results were recently obtained for a model combining multiplicative decomposition and nonlinear elasticity~\cite{DNS_2022}.


Applying  homogenization  techniques, the two-scale convergence \cite{Allaire92, Nguetseng89} and periodic unfolding \cite{CDDGZ, CDG_book} methods,  we rigorously derive macroscopic equations that describe  elastic deformations and growth at the tissue scale. In particular, we provide explicit formulae for macroscopic elastic properties as a function of microscopic parameters and microscopic structure. An important step in the rigorous derivation of the macroscopic problem is the proof of the strong convergence of the sequence of growth and strain tensors as the small parameter, representing a ratio between the size of the microstructure and the size of the tissue, converges to zero. 
We illustrate our results by solving numerically the corresponding microscopic and macroscopic equations and analyse the degree of agreement between solutions of the macroscopic problem and solutions of the original microscopic model defined at the cell wall scale. An important contribution to the numerical simulation of the macroscopic two-scale problem is the development of a two-scale numerical algorithm that allows an efficient coupling between macroscopic and microscopic properties and processes.  Along with numerical efficiency, the advantage of the derivation of macroscopic tissue level models for plant growth allow us to study different biological settings which are difficult or even impossible to formulate and simulate at the cell-scale level.  

The paper is organized as follows. In section~\ref{section_micro} we formulate and analyse the microscopic model for plant tissue growth. The macroscopic model is derived in sections~\ref{section_deriv} using both formal asymptotic expansion and rigorous derivation applying the two-scale convergence and the periodic unfolding methods. Description of the numerical simulation algorithm and implementation of the numerical methods is given in section~\ref{section_numerics}. In section~\ref{sec:experiments} we present numerical simulation results, followed by a short conclusion and an appendix.

\section{Derivation of microscopic model for plant tissue growth} \label{section_micro}

In our mathematical model for the growth of a plant tissue we consider a microscopic geometry of a plant tissue composed of cells surrounded by cell walls, connected by middle lamella, see specific geometry in Figure~\ref{fig1}a-b. In modelling the dynamics of plant tissue we consider the elastic deformations and growth of these cell walls and middle lamella.

By $\Omega_t \subset \bbR^d$, with $t>0$ and $d=2$~or~$3$,  we shall represent a part of a plant tissue in the current configuration at time $t$ and $\partial \Omega_t$ denotes the external boundary of the tissue. We shall consider the growth and elastic deformation of a plant tissue given by the map $\chi(t, \cdot) : \Omega \to \mathbb R^d$ from the initial (reference)  configuration $\Omega \subset \mathbb R^d$ into deformed (current) configuration $\Omega_t = \chi(t, \Omega)$ of a plant tissue for $t>0$. We consider $\Omega $ to be a bounded Lipschitz convex or a bounded $C^{1,\tilde\gamma}$, with $\tilde \gamma \in (0, 1)$, domain.  

When modelling growth we use the framework of morphoelasticity and  consider the  multiplicative decomposition of the deformation gradient $\F = {\bf I} + \nabla \bu$ into elastic and growth parts  $\F= \F_{e}\F_g$, where $\bu$ is the displacement of the cell walls and middle lamella according to the map $\chi$ and  $\F_\re$ and $\F_\rg$ are elastic and growth deformation gradients, respectively. Then  the elastic strain is given by
\begin{equation}\label{ElasticStrain}
\begin{aligned}
{\bf E}^{\rm el} =\frac 12 (\F^T_{\rm e} \F_{\rm e} - {\bf I})
  = \frac 12 [ (\F_{\re} - {\bf I})^T(\F_{\re} - {\bf I}) +   (\F_{\re} - {\bf I}) + (\F_{\re} - {\bf I})^T ]\\
  = 
  \frac 12 [ (\F_{\re} - {\bf I})^T(\F_{\re} - {\bf I}) + ({\bf I}+ \nabla \bu)\F_\rg^{-1} + (({\bf I}+\nabla \bu) \F_{\rg}^{-1})^T - 2{\bf I}] .
\end{aligned} 
\end{equation}
We model the cell walls and middle lamella as an hyperelastic material and consider the stress tensor in the form $${\boldsymbol{\sigma}}(x,\F_\re)= J_{\rm e}^{-1} \F_\re \partial_{\F_\re} W(x,\F_\re)^T,$$
where $W$ is the strain energy function and $J_{\rm e} = {\rm det} (\F_\re) = {\rm det} (\F \F_g^{-1})$. 
Then the constitutive equation for elastic deformations   is given by
$${\boldsymbol{\sigma}}(x,\F_\re)={\boldsymbol{\sigma}}(x, \nabla \bu, \F_\rg^{-1})= J_{\rm e}^{-1} \F \F_\rg^{-1} \partial_{\F_\re} W(x,\F\F_\rg^{-1})^T. $$
Mechanical equilibrium requires that   
\begin{equation}\label{elasticity_model_cur1}
 -{\rm div}_{\tilde x}\, {\boldsymbol{\sigma}}(x,\nabla \bu, \F_\rg^{-1}) = 0 \quad   \text{ in } \;  \Omega^w_t, \; t>0,
\end{equation}
where $\Omega^w_t$ denotes the domain of cell walls and middle lamellae, joining the walls of individual cells together,  and $\tilde x$ denotes the coordinates in the current configuration.
We complete equations~\eqref{elasticity_model_cur1} with the boundary conditions 
\begin{equation}\label{elasticity_model_cur}
\begin{aligned} 
{\boldsymbol{\sigma}} (x,\nabla \bu, \F_\rg^{-1})  \nu & = -  P \nu  && \text{ on } \;  \Gamma_t,   \; t>0, \\
{\boldsymbol{\sigma}} (x,\nabla \bu, \F_\rg^{-1}) \nu  &=  \bbf  && \text{ on } \; \partial \Omega_t \setminus \Gamma_{D, t} ,\; t>0, \\
   \bu \cdot \nu = 0, \; \;  \Pi_\tau \big({\boldsymbol{\sigma}} (x,\nabla \bu, \F_\rg^{-1}) \nu\big) =0  \quad & \text{ or } \quad  \bu  = 0  && \text{ on } \;  \Gamma_{D, t}, \; t>0, 
\end{aligned} 
\end{equation}
where $P$ is the turgor pressure inside the cells which can vary between cells and across the tissue, $\bbf$ denotes external forces,  $\nu$ is the external normal vector to the boundaries of $\Omega_t^w$ in current configuration,  $\Gamma_t$ denotes the boundary of cells, corresponding to plasma membranes,  $\Gamma_{D, t} \subset \partial\Omega_t$  for $t\geq 0$, and $\Pi_\tau$ denotes  the tangential components of the corresponding vector.

We can rewrite  \eqref{elasticity_model_cur1} and \eqref{elasticity_model_cur}, defined in the current configuration,  in  the reference configuration  to obtain  
\begin{equation}\label{elasticity_model_ref}
\begin{aligned} 
-{\rm div} (J_\rg \, \bS^T(x,\nabla \bu, \F_\rg^{-1}) \, \F_\rg^{-T} ) &= 0 \quad  && \text{ in } \;  \Omega^w, \; t>0, \\
J_\rg  \, \bS^T(x,\nabla \bu, \F_\rg^{-1}) \F_\rg^{-T} N & = -  J \, P\,   \F^{-T} N  && \text{ on } \;  \Gamma,   \; t>0, \\
J_\rg \,  \bS^T(x,\nabla \bu, \F_\rg^{-1}) \F_\rg^{-T}  N  &=   \bbf \,  J\,   |\F^{-T}  N|   && \text{ on } \; \partial \Omega \setminus \Gamma_D,\; t>0, \\
\bu \cdot N = 0, \; \; \Pi_\tau \big(J_\rg \,  \bS^T(x,\nabla \bu, \F_\rg^{-1}) \F_\rg^{-T}  N \big)=0 \quad & \text{ or } \quad  \bu = 0  && \text{ on } \;  \Gamma_{D}, \; t>0, 
\end{aligned} 
\end{equation}
where $N$ denotes the external normal vector  to the boundaries of the reference domain of cell walls  and middle lamella $\Omega^w\subset \mathbb R^d$, and  the nominal elastic stress is given by
$$ \bS^T(x,\nabla \bu, \F_\rg^{-1}) = \partial_{\F_\re} W(x,\F\F_\rg^{-1}).$$
 
To specify the constitutive relation for the stress tensor $\boldsymbol{\sigma}$, or correspondent nominal tensor $\bS$, we assume small elastic strain (small elastic deformations of plant tissues), i.e.
$$
 \boldsymbol{\sigma}(x,\nabla \bu, \F_\rg^{-1}) \approx \bS^T (x,\nabla \bu, \F_\rg^{-1})
\approx{\mathbb E}(x) {\boldsymbol{\ve}}^{\rm el}(\nabla \bu, \F_\rg^{-1}), 
$$
where $\bbE(x) = \partial_{\F_\re} \partial_{\F_\re} W(x,{\bf 1})$ is the elasticity tensor and $\boldsymbol{\ve}^{\rm el}$ is the linearised version of the elastic strain \eqref{ElasticStrain}, which depends on the displacement gradient  and growth tensor
$$
{\boldsymbol{\ve}}^{\rm el}(\nabla \bu, \F_\rg^{-1}) = \frac 12 \big[ \nabla \bu\,   \F_\rg^{-1}  + (\nabla \bu\,  \F_\rg^{-1})^T  + \F_\rg^{-1} + \F_\rg^{-T} - 2 \bI \big] ={\rm sym}(\nabla \bu\,   \F_\rg^{-1}) + {\rm sym}(\F_\rg^{-1}) - \bI  .
$$
We also assume 
$J \F^{-T} \approx J_g \F_\rg^{-T}$ for small elastic strain. 
 Notice that for   $\F_\rg = {\bf I}$ we recover the standard formula for the strain in the case of linear elasticity.  

To complete the model we specify equation for the growth tensor $\F_\rg$
\begin{equation}\label{growth_2}
\begin{aligned} 
& \frac{\partial  \F_\rg}{\partial t}  =   G(x, \nabla \bu, \F_\rg^{-1} ) \, \F_\rg && \text{ in } \; \Omega^w, \; \; t>0,  \\
&\F_\rg(0,x) = {\bf I} && \text{ for } \, x\in  \Omega^w.
\end{aligned} 
\end{equation}
Since there is no consensus on modelling growth~\cite{Goriely_2017,Smithers_2019}, we consider two scenarii and assume that the growth depends on the local average of  stress or of strain  in cell walls and middle lamella (both are compatible with Lockhart's law~\cite{Lockhart}). We also assume that the cell wall and middle lamella expand when the local average of the stress or strain is larger than some threshold value~\cite{Lockhart}. Hence we consider the stress based growth
$$
\tilde G(x, \nabla \bu, \F_\rg^{-1} ) =
\tilde G(x,  \boldsymbol{\sigma}(x,\nabla \bu, \F_\rg^{-1})  ) =
 \eta_\sigma[ {\boldsymbol{\hat{\sigma}}} - \boldsymbol{\tau}_\sigma ]_+ ,
$$
or the elastic strain based growth
$$
\tilde G(x, \nabla \bu, \F_\rg^{-1} ) =
 \tilde G(x,  {\boldsymbol{\ve}}^{\rm el}(\nabla \bu, \F_\rg^{-1}) ) =
 \eta_\ve[\boldsymbol{\hat{\ve}}^{\rm el} - {\boldsymbol\tau}_\ve ]_+ , 
$$ 
and 
\begin{equation}\label{def_G_11}
G(x, \nabla \bu, \F_\rg^{-1} )_{ij} = 
\begin{cases} 
-M & \text{ if } \; \tilde G(x, \nabla \bu, \F_\rg^{-1} )_{ij}  \leq -M , \\
\tilde G(x, \nabla \bu, \F_\rg^{-1} )_{ij} & \text{ if }  \; 
-M < \tilde G(x, \nabla \bu, \F_\rg^{-1} )_{ij} < M , \\
M & \text{ if } \; \tilde G(x, \nabla \bu, \F_\rg^{-1} )_{ij} \geq M, 
\end{cases} 
\quad\text{for  } \; i,j=1, \ldots, d, 
\end{equation}
for some $M>0$. 
Here 
$[v]_+=Q^T[Dv]^+Q$ with $Q$ the rotation that diagonalizes the tensor $v$, and $[Dv]^+$ is a diagonal matrix with the positive parts of the eigenvalues of $v$ on its diagonal. This allows to relate growth only to tensile stress, in the case of stress based growth, or only to elongational strain, in the case of strain based growth. The uniform boundedness assumption on the growth rate $G$ is used in the rigorous analysis of the model and is not restrictive from the biological point of view. 
Notice that in the growth laws we consider piece-wise constant fields
$\boldsymbol{\hat{\sigma}}$ and $\boldsymbol{\hat{\ve}}^{\rm el}$, obtained as an average over each cell of $\boldsymbol{\sigma}$ and $\boldsymbol{\ve}^{\rm el}$ respectively. 
The growth laws introduce two  physical parameters: the extensibility constants $\eta_\sigma$ and $\eta_\ve$ and  threshold matrices ${\boldsymbol\tau}_\sigma$ and ${\boldsymbol\tau}_\ve$, respectively.

\subsection{Formulation of the microscopic model}

We  assume that in a plant tissue  cells are distributed periodically and consider  the parameter~$\delta >0$ that determines the ratio between the size of a cell and the size of the tissue. We  also assume that the size of the cell and the thickness of the cell wall~$h$ are of the same order and much smaller than the size of the tissue, i.e.~$\delta$ is small.
To define  the microscopic structure of the plant tissue given by the cell walls and middle lamella, we consider a `unit cell' $Y$ and $Y_c\subset Y$ represents the  voids filled by biological cells,  with Lipschitz boundary~$\Gamma=\partial Y_c$ and composed of a finite number of subdomains separated from each other and from the edges of $Y$, whereas $Y_w = Y \setminus \overline Y_c$ represents cell walls surrounded by middle lamella. Then the  microscopic geometry of a plant tissue in the reference configuration  is defined as
$$
\Omega_{c}^\delta = \bigcup_{\xi \in \Xi_\delta} \delta\, (\overline Y_c + \xi)   \; \; \text{ and } \; \; \Omega^\delta =  \Omega \setminus \Omega_{c}^\delta,
$$
where   $\Xi_\delta = \{  \xi \in \Xi   :      \delta\, (\overline Y + \xi) \subset \overline \Omega, \, {\rm dist}(\delta\, (\overline Y_c + \xi), \partial \Omega) \geq \kappa \delta\}$ and $\Xi = \{ \xi \in \mathbb R^d  :  \xi = \sum_{j=1}^d k_j b_j, \,   k \in \mathbb Z^d \}$, with $\{b_j\}_{j=1}^d$ being the basis vectors of $Y$, i.e.~$Y = \{y \in \mathbb R^d : y = \sum_{j=1}^d s_j b_j, \, s \in (0,1)^d\}$, and for some fixed $\kappa >0$.  The boundaries of  $\Omega^\delta$ that correspond to cell plasma-membranes  are
$$
\Gamma^\delta = \bigcup_{\xi \in \Xi_\delta} \delta \, (\Gamma + \xi)
\quad \text{ and } \quad  \Gamma^\delta = \partial \Omega^\delta \setminus \partial \Omega.
$$
The parts of the tissues near the boundary $\partial \Omega$, which do not include the complete $\delta \, \overline Y$ are denoted by
 $$\Lambda_\delta = \Omega \setminus \bigcup_{\xi \in \Xi_\delta} \delta \, (\overline Y + \xi)\;  \text{ and } \;
 \Lambda_\delta  = \bigcup_{l=1}^L \Lambda^l_\delta, \; \text{ with } \;   \Lambda^l_\delta \cap \delta \, (Y + \xi) \neq \emptyset  \; \text{ for one } \;  \xi \in  \Xi \; \text{ and } \; L= \mathcal O(1/\delta^{d-1}).
 $$
Then  microscopic  equations for elastic deformations  in the reference domain   read  
\begin{equation}\label{micro_model_ref}
\begin{aligned} 
-{\rm div} (J_\rg^\delta\,  \bS^T_\delta(x,  \nabla \bu^\delta, \F_{\rg, \delta}^{-1})\, \F_{\rg, \delta}^{-T} ) &= 0 \quad  && \text{ in } \; \Omega^\delta, \; t>0, \\
J_\rg^\delta \,  \bS_\delta^T (x,  \nabla \bu^\delta, \F_{\rg, \delta}^{-1})\, \F_{\rg, \delta}^{-T} N & = -   J_\rg^\delta \,  P^\delta(t,x) \F_{\rg, \delta}^{-T} N  && \text{ on } \;  \Gamma^\delta,   \; t>0, \\
J_\rg^\delta\,  \bS_\delta^T (x, \nabla \bu^\delta, \F_{\rg, \delta}^{-1})\,  \F_{\rg, \delta}^{-T}  N  &=  J_\rg^\delta\,  \bbf(t,x) \,   |\F_{\rg, \delta}^{-T}  N|   && \text{ on } \; \partial \Omega \setminus \Gamma_D,\; t>0,  \\
\bu^\delta \cdot N=0, \; \; \Pi_\tau\big(J_\rg^\delta\,  \bS_\delta^T (x, \nabla \bu^\delta, \F_{\rg, \delta}^{-1})\,  \F_{\rg, \delta}^{-T}  N \big) =0 \quad & \text{ or } \quad  \bu^\delta = 0  && \text{ on } \; \Gamma_D,\; t>0, 
\end{aligned} 
\end{equation}
where  $J_\rg^\delta = {\rm det} (\F_{\rg, \delta})$,  $\bS_\delta^T (x,  \nabla \bu^\delta, \F_{\rg, \delta}^{-1}) = \mathbb E^\delta(x) {\boldsymbol\ve}^{\rm el}(\nabla \bu^\delta, \F_{\rg, \delta}^{-1}) =  \mathbb  E^\delta(x) [{\rm sym}(\nabla \bu^\delta \F_{\rg, \delta}^{-1})  + {\rm sym}(\F_{\rg, \delta}^{-1})  -  \bI]$, with $\mathbb E^\delta(x) = \mathbb E(x, x/\delta)$,  and $P^\delta(t,x) = P(t,x, x/\delta)$ for given functions $\mathbb E: \Omega\times Y \to \mathbb R^{4 d}$ and $P: (0,T)\times \Omega\times \Gamma \to \mathbb R$, extended  $Y$-periodically to $\mathbb R^d$ and to $\bigcup_{\xi \in \Xi}(\Gamma + \xi)$ respectively.

The dynamics of the growth tensor is determined by  
\begin{equation}\label{growth_micro_1}
\begin{aligned} 
& \frac{\partial  \F_{\rg, \delta} }{\partial t}  =   G^\delta(x, \nabla \bu^\delta, \F_{\rg, \delta}^{-1}) \, \F_{\rg, \delta} \quad &&   \text{ in } \; \Omega^\delta, \; \; t>0,  \\
&\F_{\rg, \delta}(0,x) = \bI &&   \text{ in } \; \Omega^\delta,
\end{aligned} 
\end{equation}
 where 
$$
\begin{aligned}
\tilde G^\delta(x, \nabla \bu^\delta, \F_{\rg, \delta}^{-1}) &= \eta_{\sigma} \Big[\ddashinttt_{\delta( [x/\delta]_{Y}+  Y_w)\cap \Omega} {\boldsymbol\sigma}(\tilde x, \nabla\bu^\delta, \F_{\rg, \delta}^{-1} )\,  d\tilde x - {\boldsymbol \tau}_\sigma \Big]_{+}  & \qquad \text{  or  } \\
\tilde G^\delta(x, \nabla \bu^\delta, \F_{\rg, \delta}^{-1})& = \eta_{\ve} \Big[\ddashinttt_{\delta( [x/\delta]_{Y}+  Y_w)\cap \Omega} {\boldsymbol\ve}^{\rm el}(\nabla \bu^\delta, \F_{\rg, \delta}^{-1}) \, d\tilde x - {\boldsymbol \tau}_\ve \Big]_{+},
\end{aligned}
$$
with ${\boldsymbol\sigma}(x, \nabla\bu^\delta, \F_{\rg, \delta}^{-1} ) = \bS^T_\delta (x,  \nabla \bu^\delta, \F_{\rg, \delta}^{-1})= \mathbb E^\delta(x) {\boldsymbol\ve}^{\rm el}(\nabla \bu^\delta, \F_{\rg, \delta}^{-1})$, and
\begin{equation}\label{def_G}
G^\delta(x, \nabla \bu^\delta, \F_{\rg, \delta}^{-1})_{ij} = 
\begin{cases}
- M & \text{ if } \;  \qquad \quad \tilde G^\delta(x, \nabla \bu^\delta, \F_{\rg, \delta}^{-1})_{ij} \leq - M,\\
\tilde G^\delta(x, \nabla \bu^\delta, \F_{\rg, \delta}^{-1})_{ij}   & \text{ if } \; - M < \tilde G^\delta(x, \nabla \bu^\delta, \F_{\rg, \delta}^{-1})_{ij} < M ,  \\
M & \text{ if } \; \qquad \quad  \tilde G^\delta(x, \nabla \bu^\delta, \F_{\rg, \delta}^{-1})_{ij} \geq M,
\end{cases} 
\qquad \text{ for }  i,j=1, \ldots, d.
\end{equation}

\subsection{Well-posedness of microscopic model} 

To prove existence of a weak solution of  problem \eqref{micro_model_ref}  and \eqref{growth_micro_1}, 
we consider standard ellipticity assumptions on the elasticity tensor $\mathbb E$ and regularity assumptions on the pressure $P^\delta$ and boundary forces ${\bf f}$. 
For the pressure  inside of the cells $P^\delta$ we shall consider dependence on the microscopic structure in the  form
\begin{equation}\label{form_pressure}
P^\delta(t,x) = P_1(t,x) + \delta P_2(t,x, x/\delta), 
\end{equation}
for some given functions $P_1: (0,T)\times\Omega \to \mathbb R$ and $P_2: (0,T)\times \Omega \times \Gamma \to \mathbb R$. 
\begin{assumption} \label{assumption}
\begin{itemize} 
\item Elasticity tensor $\mathbb E \in C^\gamma(\overline \Omega; L^\infty_{\rm per}(Y))^{4d}$ is positive definite  and bounded, i.e.~$\alpha_1 |\bA|^2 \leq \mathbb E(x, y) \bA \cdot \bA \leq \alpha_2 |\bA|^2$  for $x\in \Omega$, $y \in Y$, symmetric matrices ${\bf A} \in \mathbb R^{d\times d}$, and  positive constants $\alpha_1, \alpha_2$, and has minor and major symmetries, i.e.\ $\mathbb E_{ijkl} = \mathbb E_{klij} = \mathbb E_{jikl} = \mathbb E_{ijlk}$, for $i,j,k,l=1, \ldots, d$, \; for $\gamma\in (0,1)$. 
\item ${\bf f} \in C^\gamma([0,T]\times \partial \Omega)$, $P_1 \in C^\gamma([0,T]; C^1(\overline \Omega))$ and   $P_2 \in C^\gamma([0,T]\times \overline \Omega \times \Gamma)$, where $P_2(t, x, \cdot)$ is $Y$-periodically extended to $\Gamma + \xi$, with $\xi \in \Xi$, for $\gamma \in (0,1)$.
\end{itemize}
\end{assumption}

If $\Gamma_D \neq \emptyset$, consider $\mathcal G$ the symmetry group of $\Gamma_D$, formed of at most one, for $d=2$, or two, for $d=3$, translations and at most one rotation, for $d=3$, that leave $\Gamma_D$ invariant. Let $\mathcal G_\tau$ be the subspace of ${\mathbb R}^d$ spanned by the set of translations in $\mathcal G$ and $\{\boldsymbol{\rho}_1,\boldsymbol{\rho}_2\}$ be the orthonormal basis of the plane perpendicular to the rotation axis. Then, depending on the boundary conditions, we define the following space for solutions of~\eqref{micro_model_ref}
$$
\begin{aligned}
V_\delta  = & \{ \bu  \in H^1(\Omega^\delta)^d \, :   \; \bu = 0  \, \text{ on } \, \Gamma_D  \}  \qquad \qquad \text{ or } \\
V_\delta  = & \{ \bu  \in H^1(\Omega^\delta)^d \, : \;  \bu\cdot N = 0 \, \text{ on } \, \Gamma_D, \;  \int_{\Omega^\delta}  \Pi_{\mathcal G_\tau} (\bu) \, dx =0, \,   \, \int_{\Omega} \boldsymbol{\rho}_1(\nabla \hat\bu - \nabla \hat \bu^T)\boldsymbol{\rho}_2\,  dx=0 \},
\end{aligned}
$$ 
where 
$\Pi_{\mathcal G_\tau}$ is the projection on $\mathcal G_\tau$ and  $\hat \bu$ denotes an extension of $\bu$ from $\Omega^\delta$ into $\Omega$, see e.g.~\cite{OSY}.
If $\Gamma_D = \emptyset$, 
then 
$$V_\delta=\{\bu  \in H^1(\Omega^\delta)^d \; : \int_{\Omega^\delta} \bu \, dx =0, \;  \int_{\Omega} \big(\partial_{x_j} \hat \bu_i - \partial_{x_i} \hat \bu_j\big) dx =0, \;  \text{ for } i\neq j, \; i,j=1, \ldots, d \}. $$

Using  \eqref{form_pressure} for $P^\delta$ and  ${\rm div} (J_\rg^\delta \F_{\rg, \delta}^{-T}) = 0$ in $\Omega^\delta$ we can rewrite equations in \eqref{micro_model_ref} as  
\begin{equation}\label{micro_model_ref_2}
\begin{aligned} 
-{\rm div} \Big(J_\rg^\delta\,  \Big(\bS^T_\delta(x,  \nabla \bu^\delta, \F_{\rg, \delta}^{-1}) + P_1(t,x) {\bf I}\Big)\, \F_{\rg, \delta}^{-T} \Big)    &
= - J_\rg^\delta \F_{\rg, \delta}^{-T}  \nabla P_1(t,x)  \quad  && \text{ in }  \Omega^\delta, \; t>0, \\
J_\rg^\delta \, \Big( \bS_\delta^T (x,  \nabla \bu^\delta, \F_{\rg, \delta}^{-1}) + P_1(t,x) {\bf I} \Big)\, \F_{\rg, \delta}^{-T} N & = -  \delta \,  J_\rg^\delta \,  P_2^\delta(t,x) \F_{\rg, \delta}^{-T} N  && \text{ on }   \Gamma^\delta,  \; t>0,   \\
J_\rg^\delta\,  \bS_\delta^T (x, \nabla \bu^\delta, \F_{\rg, \delta}^{-1}) \,  \F_{\rg, \delta}^{-T}  N  &= J_\rg^\delta\,  \bbf(t,x)    |\F_{\rg, \delta}^{-T}  N|   && \text{ on }   \Gamma_N, \; t>0, \\
\bu^\delta \cdot N =0, \;  J_\rg^\delta\, \Pi_\tau\big( \bS_\delta^T (x, \nabla \bu^\delta, \F_{\rg, \delta}^{-1})\,  \F_{\rg, \delta}^{-T}  N \big) = 0 \quad  & \text{ or } \quad \bu^\delta = 0  && \text{ on }  \Gamma_D,\; t>0, 
\end{aligned} 
\end{equation} 
where $\Gamma_N = \partial \Omega \setminus \Gamma_D$ and $P_2^\delta(t,x) = P_2(t,x, x/\delta)$. 

In the analysis and numerical implementation of   \eqref{growth_micro_1} and \eqref{micro_model_ref_2}  we shall consider weak solutions of the model equations. We shall use the notation $\langle \phi, \psi\rangle_A = \int_A \phi \psi\, dx$, for $\phi \in L^p(A)$, $\psi \in L^q(A)$,  and
$\langle \phi, \psi\rangle_{\partial A} = \int_{\partial A} \phi \psi\, d\gamma$, for $\phi \in L^p(\partial A)$, $\psi \in L^q(\partial A)$, with $1< p, q < \infty$, $1/p+1/q =1$, and a bounded Lipschitz domain $A$.

\begin{definition} 
A weak solution of \eqref{growth_micro_1}  and \eqref{micro_model_ref_2}   are  $\bu^\delta \in L^2(0,T; V_\delta)$ and $\F_{\rg, \delta} \in W^{1, \infty}(0,T;  L^q (\Omega^\delta))^{d\times d}$, for any $q \in (1,\infty)$, with $\F_{\rg, \delta} \in   L^\infty ((0,T)\times \Omega^\delta)^{d\times d}$, satisfying
\begin{equation} \label{var_inequal}
\begin{aligned} 
 \Big\langle  J_\rg^\delta \,  \mathbb E^\delta(x) {\rm sym}(\nabla \bu^\delta \F_{\rg, \delta}^{-1}) + P_1(t,x) {\bf I} ,  \nabla \varphi \F_{\rg, \delta}^{-1} \Big\rangle_{\Omega^\delta} + \delta \, \Big \langle J_{\rg}^\delta \, P_2^\delta(t,x)\, \F_{\rg, \delta}^{-T}   N,  \varphi \Big \rangle_{\Gamma^\delta}\\
   - \Big \langle J_{\rg}^\delta \, P_1(t,x)\, \F_{\rg, \delta}^{-T}   N,  \varphi \Big \rangle_{\partial \Omega}  = \Big
\langle J_\rg^\delta\,  \mathbb E^\delta(x) \big[\bI -{\rm sym} (\F_{\rg, \delta}^{-1})\big],   \nabla \varphi \F_{\rg, \delta}^{-1} \Big\rangle_{\Omega^\delta} 
\\    - \Big
\langle J_\rg^\delta\, \F_{\rg, \delta}^{-T} \nabla P_1(t,x), \varphi \Big\rangle_{\Omega^\delta} 
+ \Big\langle  J_\rg^\delta\, \bbf(t,x) \, |\F_{\rg, \delta}^{-T} N|, \varphi \Big\rangle_{\Gamma_N} 
\end{aligned} 
\end{equation} 
for $\varphi \in L^2(0,T; V_\delta)$ and a.a.~$t \in (0,T)$, and $\F_{\rg, \delta}$ satisfies \eqref{growth_micro_1}  a.e.~in $(0,T)\times\Omega^\delta$.
\end{definition}

First we shall prove a version of the Korn inequality, where the symmetric gradient  includes the growth tensor. 
\begin{lemma}\label{Korn_gener}
For $\bu \in V_\delta$ and   a tensor  $\F \in L^\infty(\Omega)$ such that  on each $\delta (Y + \xi)\cap \Omega$, for $\xi \in \Xi$, $\F$ is constant, ${\rm det}(\F)\geq 1$ and eigenvalues   $\lambda_j(\F) \geq 1$ for $j=1,\ldots, d$,  we have the following estimate
\begin{equation}\label{estim:Korn_gen}
\|\bu \|^2_{L^2(\Omega^\delta)}+ \|\nabla \bu\|^2_{L^2(\Omega^\delta)} \leq  C \|{\rm{sym}}\big(\nabla \bu \F^{-1} \big)\|^2_{L^2(\Omega^\delta)}, 
\end{equation}
with constant  $C = C(\F)$  independent of $\delta$. 
\end{lemma}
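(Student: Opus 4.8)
\emph{Plan.} The proof rests on one algebraic observation. On each periodicity cell $\delta(Y+\xi)\cap\Omega$ the tensor $\F$ equals a fixed matrix $\F_\xi$, so $\F_\xi^{T}\nabla\bu=\nabla(\F_\xi^{T}\bu)$ there and
$$
{\rm sym}\big(\F_\xi^{T}\nabla\bu\big)=\F_\xi^{T}\,{\rm sym}\big(\nabla\bu\,\F_\xi^{-1}\big)\,\F_\xi .
$$
Since $\lambda_j(\F)\ge1$ and $\det\F\ge1$ bound $\|\F^{-1}\|$ in terms of $\|\F\|_{L^\infty(\Omega)}$ and $d$ only, summing this identity over cells reduces \eqref{estim:Korn_gen} to an estimate of $\|\bu\|_{L^2(\Omega^\delta)}^2+\|\nabla\bu\|_{L^2(\Omega^\delta)}^2$ by $\sum_\xi\|{\rm sym}(\F_\xi^{T}\nabla\bu)\|_{L^2(\delta(Y+\xi)\cap\Omega^\delta)}^2$, i.e.\ to a Korn inequality for the cell-wise constant shear $\bu\mapsto\F^{T}\bu$. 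Setting $E={\rm sym}\,\nabla\bu$ and $W={\rm skew}\,\nabla\bu$ one has ${\rm sym}(\F^{T}\nabla\bu)=\mathcal A_\F(E)+\mathcal B_\F(W)$ with $\mathcal A_\F(E)=\tfrac12(\F^{T}E+E\F)$ and $\mathcal B_\F(W)=\tfrac12(\F^{T}W-W\F)$; because the eigenvalues of $X\mapsto\F^{T}X+X\F$ on $\mathbb R^{d\times d}$ are $\lambda_i(\F)+\lambda_j(\F)\ge2$ (counted with multiplicity), the map $\mathcal A_\F$ is an automorphism of the symmetric matrices with $\|\mathcal A_\F^{-1}\|\le C(\F)$, so pointwise $E=\mathcal A_\F^{-1}\big({\rm sym}(\F^{T}\nabla\bu)-\mathcal B_\F(W)\big)$, and the whole matter is to control $W$ in $L^2(\Omega^\delta)$ uniformly in $\delta$.

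\emph{Controlling $W$.} For the skew part I would use the classical Korn inequality on the perforated domain with a $\delta$-independent constant, which follows from the uniformly bounded, symmetric-gradient-preserving extension operator $\mathcal E_\delta\colon H^1(\Omega^\delta)\to H^1(\Omega)$ of \cite{OSY}: for $\bu\in V_\delta$ it gives $\|W\|_{L^2(\Omega^\delta)}\le\|\nabla\bu\|_{L^2(\Omega^\delta)}\le C_0\|E\|_{L^2(\Omega^\delta)}$ with $C_0$ independent of $\delta$. When $\|\F-\bI\|_{L^\infty(\Omega)}$ is small this closes the argument at once, since $\mathcal B_{\bI}=0$ and $\|\mathcal B_\F\|\le\|\F-\bI\|_{L^\infty}$, so the term $\mathcal B_\F(W)$ is absorbed. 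For general $\F$ the kernel of $\bu\mapsto{\rm sym}(\nabla\bu\,\F^{-1})$ is strictly larger than that of $\bu\mapsto E$ — on a cell it is the space of ``$\F$-twisted rigid motions'' $\{x\mapsto A\F x+b:\ A^{T}=-A,\ b\in\mathbb R^{d}\}$ — so one must argue with the twisted operator itself. The plan is then: (i) prove the estimate on the fixed reference cell $Y_w$ for an arbitrary constant $\F$, either by a closed-range argument (the twisted symmetric gradient is bounded with finite-dimensional kernel) or, equivalently, by the change of variables $\bu\mapsto\bu(\F^{-1}\cdot)$, which converts it into the ordinary Korn inequality on the Lipschitz domain $\F\,Y_w$; (ii) note that the resulting constant depends continuously on $\F$ and hence stays bounded over the admissible set $\{\lambda_j\ge1,\ \det\ge1,\ \|\F\|\le\Lambda\}$, because $\F$ and $\F^{-1}$ are then bounded and $\F\,Y_w$ is a bi-Lipschitz image of $Y_w$ with uniform constants; (iii) rescale to the cells $\delta(Y_w+\xi)$ and assemble.

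\emph{Main obstacle.} The difficulty is the assembly with a constant independent of $\delta$, exactly as in the classical perforated-domain Korn inequality. The cell-wise estimate controls $\nabla\bu$ only up to a cell-dependent twisted rotation $A_\xi\F_\xi$; these must be propagated from cell to cell using the continuity of $\bu$ across the (fat) cell-wall necks and then pinned down globally by the integral constraints in $V_\delta$ — for $\Gamma_D=\emptyset$ these are $\int_{\Omega^\delta}\bu\,dx=0$ and $\int_\Omega(\partial_{x_j}\hat\bu_i-\partial_{x_i}\hat\bu_j)\,dx=0$, which do annihilate the twisted rigid motions since $A\mapsto{\rm skew}(A\F)=\tfrac12(A\F+\F^{T}A)$ is injective on skew matrices (same eigenvalue computation as for $\mathcal A_\F$); the cases with $\Gamma_D\ne\emptyset$ are similar, and trivial when $\bu=0$ on $\Gamma_D$. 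The extra twist relative to the classical situation is that $\F$ jumps across cell boundaries, so neighbouring $A_\xi\F_\xi$ are not directly comparable to $A_\xi,A_{\xi'}$: one estimates the differences of $A_\xi\F_\xi$ by trace and Poincaré inequalities on the necks, inserts them into a discrete Poincaré inequality on the cell grid, and recovers the $A_\xi$ from the $\delta$-independent invertibility of $A\mapsto{\rm skew}(A\F_\xi)$. I expect keeping track of these discrete estimates — in particular making sure no inverse power of $\delta$ survives — to be the technical core; the pressure and boundary data are irrelevant, and the final constant is controlled entirely through $\|\F\|_{L^\infty(\Omega)}$, which is finite and, in the applications to \eqref{growth_micro_1}, bounded uniformly in $\delta$.
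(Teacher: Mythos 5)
Your algebraic reduction is sound as far as it goes: the identity ${\rm sym}(\F^{T}\nabla \bu)=\F^{T}\,{\rm sym}(\nabla\bu\,\F^{-1})\,\F$, the splitting ${\rm sym}(\F^{T}\nabla\bu)=\mathcal A_\F(E)+\mathcal B_\F(W)$ with $\mathcal A_\F$ an automorphism of the symmetric matrices, the description of the kernel as the twisted rigid motions $x\mapsto A\F x+b$, and the check that the integral constraints in $V_\delta$ annihilate them are all correct, and the perturbative case $\|\F-\bI\|_{L^\infty}$ small is genuinely closed by absorption. But the lemma is needed for growth tensors far from the identity (in the application $\F=\F_{\rg,\delta}(t)$ grows with time), and there your argument stops exactly where the work begins: bounding $W={\rm skew}\,\nabla\bu$ in $L^2(\Omega^\delta)$ by $\|{\rm sym}(\nabla\bu\,\F^{-1})\|_{L^2(\Omega^\delta)}$ with a $\delta$-independent constant \emph{is} the content of the lemma, and your plan for it --- cell-wise Korn on $\F_\xi Y_w$, propagation of the cell-dependent twisted rotations $A_\xi\F_\xi$ across the wall necks, a discrete Poincar\'e inequality on the cell grid --- is announced but not executed. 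You flag this yourself as ``the technical core''; as written, for general $\F$ the proposal is a programme rather than a proof, and it is precisely in that assembly step that inverse powers of $\delta$ threaten to appear.

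For comparison, the paper's proof avoids any cell-to-cell propagation. It changes variables $x\mapsto\F(\xi)x$ separately on each periodicity cell, which converts $|{\rm sym}(\nabla\bu\,\F^{-1})|^2$ into the \emph{ordinary} squared symmetric gradient of the transformed function on the deformed cell $\delta\F(\xi)(Y_w+\xi)$ at the cost of a factor $\det\F^{-1}(\xi)$; it then extends from the deformed wall region into the deformed hole as in \cite[Lemma~4.1]{OSY} (this is where the constant acquires its dependence on $\F$), and invokes $\lambda_j(\F)\geq1$, $\det\F\geq1$ and the uniform boundedness of $\F$ to conclude that the union $\Omega_F$ of the deformed cells covers $\Omega$, so that the standard Korn inequality on the fixed domain $\Omega$ applies in one stroke, with no discrete assembly. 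Your route is a legitimate alternative in spirit (it is how uniform Korn inequalities on perforated domains are often proved), but to be acceptable it would need the neck/trace estimates and the discrete Poincar\'e argument written out in full, including the uniform-in-$\xi$ invertibility of $A\mapsto{\rm skew}(A\F_\xi)$ that recovers $A_\xi$ from the propagated quantities.
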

\begin{proof}
Assumptions on the microscopic structure of the plant tissues ensure the existence of an extension $\hat \bu$ of $\bu$ from $\Omega^\delta$ to $\Omega$ with 
$$
\begin{aligned}
 \|\hat \bu \|_{H^1(\Omega)} & \leq C \| \bu\|_{H^1(\Omega^\delta)}, \quad &
 \|\hat \bu \|_{L^2(\Omega)} + \|{\rm{sym}}(\nabla \hat \bu) \|_{L^2(\Omega)} & \leq C \big(\| \bu\|_{L^2(\Omega^\delta)} + \| {\rm{sym}} (\nabla \bu )\|_{L^2(\Omega^\delta)}\big), \\
 \|\nabla \hat \bu \|_{L^2(\Omega)} & \leq C \| \nabla \bu\|_{L^2(\Omega^\delta)},  \quad &
 \|{\rm{sym}}(\nabla \hat \bu) \|_{L^2(\Omega)} &\leq C \| {\rm{sym}} (\nabla \bu )\|_{L^2(\Omega^\delta)},
\end{aligned}
$$
 where constant $C$ is independent of $\delta$, see e.g.~\cite{OSY}.  In the following we shall identify $\bu$ with its extension $\hat \bu$. Using that  $\F$  and hence   $\F^{-T}$  are  constant on each $\delta (Y + \xi)\cap \Omega$, for $\xi \in \Xi$, and the properties of $\Omega^\delta$, together with an extension of $\bu$ from $ \delta \F(\xi)Y_w$ into $ \delta \F(\xi)Y$, constructed as in e.g.~\cite[Lemma~4.1]{OSY},  we obtain
$$
\begin{aligned} 
& \int_{\Omega^\delta} |{\rm{sym}}(\nabla \bu \F^{-1})|^2 dx = 
\sum_{\xi \in \Xi_\delta} \int_{\delta (Y_w + \xi)}
|{\rm{sym}}(\nabla \bu \F^{-1})|^2 dx
+ \sum_{l=1}^L\int_{\Lambda_\delta^l} |{\rm{sym}}(\nabla \bu \F^{-1})|^2 dx\\
&\quad  = \sum_{\xi \in \Xi_\delta} {\rm det } (\F^{-1}(\xi))\int_{\delta \F(\xi)(Y_w + \xi)}
|{\rm{sym}}(\nabla \bu ) |^2 dx
+ \sum_{l=1}^L
{\rm det}(\F^{-1}(\xi_l))
\int_{\F(\xi_l)\Lambda_\delta^l} |{\rm{sym}}(\nabla \bu)|^2 dx \\
& \quad \geq  \sum_{\xi \in \Xi_\delta} {\rm det } (\F^{-1}(\xi)) C(\F(\xi))\int_{\delta \F(\xi)(Y + \xi)}
|{\rm{sym}}(\nabla \bu) |^2 dx
+ \sum_{l=1}^L
{\rm det}(\F^{-1}(\xi_l))
\int_{\F(\xi_l)\Lambda_\delta^l} |{\rm{sym}}(\nabla \bu )|^2 dx,
\end{aligned}
$$
where  the dependence of the constant $C$  on $\F(\xi)$, for $\xi \in \Xi_\delta$, arises from the application of the  Korn and Poincar\'e inequalities when constructing an extension from $\F(\xi)Y_w$ into $\F(\xi)Y$, see \cite[Lemma 4.1]{OSY} for more details, and $\xi_l \in \Lambda_\delta^l$ for $l=1, \ldots, L$.
Using the uniform boundedness of $\F$, together with the fact that   ${\rm det}(\F)\geq 1$ in $(0,T)\times \Omega$ and eigenvalues $\lambda_j(\F) \geq 1$, with $j=1,\ldots, d$,
and  applying the Korn inequality, see e.g.~\cite{Ciarlet, Lions_1976, OSY},  yield
$$
\begin{aligned} 
\int_{\Omega^\delta} |{\rm sym} (\nabla \bu \F^{-1})|^2 dx &\geq C \int_{\Omega_F}  
|{\rm sym}(\nabla \bu)  |^2   dx
\geq C \int_{\Omega}  
|{\rm sym} (\nabla \bu)  |^2   dx \\
&\geq C \int_{\Omega} \big(| \bu |^2 + |\nabla \bu |^2   \big)dx 
\geq  C \int_{\Omega^\delta} (\big |\bu |^2  +|\nabla \bu |^2 \big)  dx , 
\end{aligned}
$$
for $\bu \in V_\delta$, where $\Omega_F = \big(\bigcup_{\xi \in \Xi_\delta} \delta \F(\xi)(Y + \xi)\big) \bigcup \big(\bigcup_{l=1}^L \F(\xi_l) \Lambda_\delta^l\big) $. This implies the result stated in the lemma.
\end{proof}
\begin{remark} 
Some results on the generalisation of the  Korn inequality can be found in~\cite{Neff_2002, Pompe_2003}. Notice that in \cite{Neff_2002} $C^1$-regularity of $\F$ is required and in \cite{Pompe_2003} the continuity of $\F$ or $\bu=0$ on $\partial \Omega$ are assumed. In the proof of Lemma~\ref{Korn_gener} we use the fact that $\F$ is uniformly bounded and piece-wise constant and the eigenvalues of $\F$ satisfy   $\lambda_j(\F) \geq 1$, for $j=1,\ldots, d$, ensuring that $\Omega$ is a subdomain of the transformed domain $\Omega_F$. 
\end{remark}

Using the inequality \eqref{estim:Korn_gen} and applying the Banach fixed point theorem we prove the well-posedness result for microscopic model \eqref{growth_micro_1} and \eqref{micro_model_ref_2}. 

\begin{theorem} \label{th:exist}
Under Assumptions~\ref{assumption} there exists a unique  weak solution of   \eqref{growth_micro_1}, \eqref{micro_model_ref_2} satisfying 
\begin{equation} \label{estim_apriori_1}
\|\bu^\delta \|_{L^\infty(0,T; V_\delta)} + \|\F_{\rm g, \delta} \|_{L^\infty((0,T)\times\Omega^\delta)}  +
\|\partial_t \F_{\rm g, \delta} \|_{L^\infty((0,T)\times \Omega^\delta)} + \|\F_{\rm g, \delta} \|_{W^{1,\infty}(0,T; L^q(\Omega^\delta))} \leq C,
\end{equation}
with a constant $C$ independent of $\delta$ and any $q \in (1, \infty)$.
\end{theorem}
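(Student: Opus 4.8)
The plan is to recast the coupled system \eqref{growth_micro_1}, \eqref{micro_model_ref_2} as a fixed-point problem for the growth tensor $\F_{\rg,\delta}$ and apply the Banach fixed point theorem. Fix $q\in(1,\infty)$ and $\delta>0$, and let $\mathcal K$ be the set of $\F\in C([0,T];L^q(\Omega^\delta))^{d\times d}$ with $\F(0,\cdot)=\bI$, with $\F(t,\cdot)$ constant on each $\delta(Y+\xi)\cap\Omega$, with $\det\F\ge 1$ and eigenvalues of $\F$ at least $1$ a.e., and with $\|\F\|_{C([0,T];L^\infty(\Omega^\delta))}\le e^{CMT}$ for a suitable $C$; this is a closed subset of a Banach space. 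To $\F\in\mathcal K$ I associate, first, the displacement $\bu^\delta=\bu^\delta[\F]$ solving the \emph{linear} problem obtained from \eqref{var_inequal} by freezing $\F_{\rg,\delta}=\F$, and then the new growth tensor $\mathcal T\F$ solving the \emph{linear} ODE $\partial_t(\mathcal T\F)=G^\delta(x,\nabla\bu^\delta[\F],\F^{-1})\,\mathcal T\F$ with $(\mathcal T\F)(0,\cdot)=\bI$; a fixed point of $\mathcal T$ is exactly a weak solution in the sense of the Definition.

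For the elliptic step I fix $t$ and $\F\in\mathcal K$. Since $\mathbb E$ has minor symmetries, the bilinear form $\varphi\mapsto\langle J_\rg^\delta\mathbb E^\delta\,{\rm sym}(\nabla\bu\,\F^{-1}),\nabla\varphi\,\F^{-1}\rangle_{\Omega^\delta}$ equals $\langle J_\rg^\delta\mathbb E^\delta\,{\rm sym}(\nabla\bu\,\F^{-1}),{\rm sym}(\nabla\varphi\,\F^{-1})\rangle_{\Omega^\delta}$; using $J_\rg^\delta=\det\F\ge1$, the ellipticity of $\mathbb E$, and Lemma~\ref{Korn_gener} (applicable precisely because $\F\in\mathcal K$), this form is coercive on $V_\delta$ with a constant bounded below uniformly in $\delta$ as long as $\|\F\|_{L^\infty}$ and the eigenvalue bounds are uniform. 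Boundedness of the form and of the right-hand side of \eqref{var_inequal} follows from Assumptions~\ref{assumption}, the standard extension and trace estimates on the perforated domain $\Omega^\delta$ (the factor $\delta$ in front of the $P_2^\delta$-term on $\Gamma^\delta$ compensating the $\delta^{-1}$ in the scaled trace inequality), and $\det\F\ge1$. Lax--Milgram then yields a unique $\bu^\delta(t)\in V_\delta$ with $\|\bu^\delta(t)\|_{V_\delta}\le C$ uniformly in $t$ and $\delta$; H\"older continuity in $t$ of the data and of $\F$ gives $\bu^\delta\in C([0,T];V_\delta)\subset L^\infty(0,T;V_\delta)$.

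For the ODE step, $G^\delta$ is bounded by $M$ entrywise by \eqref{def_G} and is Lipschitz in $(\nabla\bu^\delta,\F^{-1})$ on $\mathcal K$ (the positive-part and truncation operations are Lipschitz, the cell averages are bounded linear maps, and $\boldsymbol\sigma,\boldsymbol\ve^{\rm el}$ are affine in $\nabla\bu^\delta$ and smooth in $\F^{-1}$ away from degeneracy). Hence the linear ODE has a unique solution, constant on each cell because $G^\delta$ is, with $\|\mathcal T\F(t)\|_{L^\infty}\le e^{CMt}$ and $\|\partial_t\mathcal T\F\|_{L^\infty}\le CMe^{CMT}$, all independent of $\delta$; Liouville's formula $\frac{d}{dt}\det(\mathcal T\F)={\rm tr}(G^\delta)\det(\mathcal T\F)$ together with nonnegativity of the diagonal entries of $G^\delta$ (inherited from $[\,\cdot\,]_+$) gives $\det(\mathcal T\F)\ge1$, and the positive-part structure likewise keeps the eigenvalues $\ge1$, so $\mathcal T:\mathcal K\to\mathcal K$. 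Subtracting the elliptic problems for $\F_1,\F_2\in\mathcal K$ and testing with $\bu^\delta[\F_1]-\bu^\delta[\F_2]$ gives, via the coercivity above and the Lipschitz dependence of all $\F$-dependent coefficients, $\|\bu^\delta[\F_1](t)-\bu^\delta[\F_2](t)\|_{V_\delta}\le C\|\F_1(t)-\F_2(t)\|_{L^q(\Omega^\delta)}$; subtracting the two ODEs and using the Lipschitz bound on $G^\delta$ with Gronwall's inequality gives $\|\mathcal T\F_1(t)-\mathcal T\F_2(t)\|_{L^q(\Omega^\delta)}\le C\int_0^t\|\F_1(s)-\F_2(s)\|_{L^q(\Omega^\delta)}\,ds$. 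Iterating, some power $\mathcal T^n$ is a contraction on $C([0,T];L^q(\Omega^\delta))^{d\times d}$, so $\mathcal T$ has a unique fixed point; applying the same Gronwall estimate to two arbitrary weak solutions gives uniqueness.

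Finally, the estimate \eqref{estim_apriori_1} follows without circularity: the bounds on $\F_{\rg,\delta}$ and $\partial_t\F_{\rg,\delta}$ use only $\|G^\delta\|\le CM$ from the truncation in \eqref{def_G}, hence hold independently of $\delta$ and of $\bu^\delta$; inserting $\|\F_{\rg,\delta}\|_{L^\infty}\le C$ into Lemma~\ref{Korn_gener} and Lax--Milgram bounds $\|\bu^\delta\|_{L^\infty(0,T;V_\delta)}$ uniformly in $\delta$, and the $W^{1,\infty}(0,T;L^q(\Omega^\delta))$ bound comes from the pointwise-in-$x$ bounds and $|\Omega^\delta|\le|\Omega|$. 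I expect the main obstacle to be verifying that $\mathcal T$ maps $\mathcal K$ into itself with the size bound and the $\det\ge1$, $\lambda_j\ge1$, piecewise-constant-on-cells constraints all controlled uniformly in $\delta$: this is exactly the hypothesis under which Lemma~\ref{Korn_gener} delivers a $\delta$-uniform coercivity constant, and it rests delicately on the positive-part structure of $\tilde G$. A secondary technical point is the $\delta$-uniform handling of the oscillating boundary integrals over $\Gamma^\delta$ in \eqref{var_inequal} via the scaled trace inequality on $\Omega^\delta$.
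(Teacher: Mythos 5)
Your proposal is correct and follows essentially the same route as the paper: freeze the growth tensor in an admissible class (uniformly bounded, $\det\ge 1$, eigenvalues $\ge 1$, piecewise constant on cells), solve the elliptic problem by Lax--Milgram using the generalized Korn inequality of Lemma~\ref{Korn_gener} and the scaled trace estimate, solve the growth ODE using the truncation bound $M$ and the Liouville formula to preserve the admissible class, and close with the Banach fixed point theorem, with the $\delta$-uniform a priori bounds coming only from the truncation of $G^\delta$ and the Korn/coercivity constants. The only (immaterial) variation is that you obtain contractivity of an iterate $\mathcal T^n$ on the whole of $[0,T]$ via the integral Gronwall bound, whereas the paper contracts on a short time interval and then iterates in time; note also that the Lipschitz estimate for the elliptic step naturally lands in the spatial $L^\infty$ norm of $\F_1-\F_2$ (as in the paper), which for fixed $\delta$ is equivalent to your $L^q$ norm only because the growth tensors are piecewise constant on finitely many cells.
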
 
\begin{proof}  We  shall apply a fixed point argument to show existence of a solution of     \eqref{growth_micro_1}, \eqref{micro_model_ref_2}. Consider
$$
\begin{aligned}
\mathcal W = & \{ \F \in L^\infty ((0,T)\times\Omega^\delta)^{d\times d} :  \F \in W^{1,\infty}(0,T; L^q(\Omega^\delta))^{d\times d}, \;  \|\F\|_{L^\infty((0,T)\times\Omega^\delta)} \leq \hat\kappa, \; {\rm det}(\F) \geq 1 , \; \\ & \lambda_j(\F) \geq 1 \text{ in } [0, T]\times \Omega^\delta,  j=1, \ldots d,
\text{ and piece-wise constant  in each } \delta (Y_w + \xi)\cap \Omega \; \text{ for } \; \xi \in \Xi\},
\end{aligned}
$$ 
with $\hat\kappa \geq  \exp(d MT)$. For
a given  $\tilde \F_{\rg, \delta} \in \mathcal W$,
taking $\bu^\delta$ as a test function in \eqref{var_inequal} and using assumptions on $\mathbb E$, $P_1$, $P_2$, and $\bbf$, together with the fact that $1 \leq  {\rm det} (\tilde \F_{\rg, \delta}) \leq C$, with a constant $C$ independent of $\delta$,  we obtain 
\begin{equation}\label{estim_u_11}
 \|{\rm{sym}}(\nabla \bu^\delta \tilde \F_{\rg, \delta}^{-1})\|^2_{L^\infty(0,T; L^2(\Omega^\delta))} \leq C\big(1 + \varsigma \|\bu^\delta \|^2_{L^\infty(0, T; V_\delta)}\big) ,
\end{equation}
for any fixed $\varsigma >0$. Here we also used the trace estimate 
$$
\delta\|v\|^2_{L^2(\Gamma^\delta)} \leq C \big( 
\|v\|^2_{L^2(\Omega^\delta)} + \delta^2 \|\nabla v\|^2_{L^2(\Omega^\delta)} \big)  \quad  \text{ for } \; v \in H^1(\Omega^\delta),
$$
for some constant $C$ independent of $\delta$, see e.g.~\cite{HJ_91}.
Notice that since $\tilde \F_{\rg, \delta}$ is constant on each $\delta(Y_w + \xi) \cap \Omega$ it can be trivially extended by the constant into each $\delta(Y+ \xi)\cap \Omega$, for $\xi \in \Xi$. Using
Lemma~\ref{Korn_gener} we obtain 
\begin{equation}\label{estim_u_delta} 
 \| \bu^\delta \|_{L^\infty(0, T; V_\delta)}
 \leq C  \|{\rm{sym}}(\nabla \bu^\delta \tilde \F_{\rg, \delta}^{-1})\|_{L^\infty(0,T; L^2(\Omega^\delta))}, 
 \end{equation}
 and then choosing in \eqref{estim_u_11}  sufficiently small $\varsigma>0$ yields
 \begin{equation}\label{estim-u-delta_1}
 \| \bu^\delta \|_{L^\infty(0, T; V_\delta)}
 \leq C, 
 \end{equation}
 where the constant  $C$ does not depend on $\delta$.  Assumptions on $\mathbb E$, $P_1, P_2$, ${\bf f}$ and $ \tilde\F_{\rg, \delta}$, together with the estimate in Lemma~\ref{Korn_gener}, ensure that  
 $$
 \begin{aligned} 
 F(\varphi) =&  
  \Big \langle J_{\rg}^\delta \, P_1(t,x)\, \tilde \F_{\rg, \delta}^{-T}   N,  \varphi \Big \rangle_{\partial \Omega}- \Big\langle P_1(t,x) {\bf I} ,  \nabla \varphi \, \tilde \F_{\rg, \delta}^{-1} \Big\rangle_{\Omega^\delta} - \delta\Big \langle J_{\rg}^\delta \, P_2^\delta(t,x)\, \tilde \F_{\rg, \delta}^{-T}   N,  \varphi \Big \rangle_{\Gamma^\delta} \\
&   + \Big
\langle J_\rg^\delta\,  \mathbb E^\delta(x) \big[\bI -{\rm sym} (\tilde \F_{\rg, \delta}^{-1})\big],   \nabla \varphi \, \tilde \F_{\rg, \delta}^{-1} \Big\rangle_{\Omega^\delta} 
- \Big
\langle J_\rg^\delta\, \tilde \F_{\rg, \delta}^{-T} \nabla P_1(t,x), \varphi \Big\rangle_{\Omega^\delta} 
+ \Big\langle  J_\rg^\delta\, \bbf(t,x) \, |\tilde \F_{\rg, \delta}^{-T} N|, \varphi \Big\rangle_{\Gamma_N} 
\end{aligned} 
 $$
 defines a bounded linear functional on $V_\delta$ and 
 $$
 B(\bu^\delta, \varphi)= \Big\langle  J_\rg^\delta \,  \mathbb E^\delta(x) {\rm sym}(\nabla \bu^\delta \tilde \F_{\rg, \delta}^{-1}),  {\rm sym} (\nabla \varphi \tilde \F_{\rg, \delta}^{-1}) \Big\rangle_{\Omega^\delta}
 $$
 is a coercive bilinear form on $V_\delta\times V_\delta$, for $t \in (0,T)$ and a given  $\tilde \F_{\rg, \delta} \in \mathcal W$, where $J_\rg^\delta = {\rm det} (\tilde \F_{\rg, \delta})$. 
Thus  the Lax-Milgram  theorem, see e.g.~\cite{Evans}, yields existence of a unique solution of problem~\eqref{micro_model_ref_2}  in $L^\infty(0,T;  V_\delta)$ for a given $\tilde \F_{\rg, \delta} \in \mathcal W$ and each fixed $\delta>0$. The boundedness of function $G^\delta$ also ensures existence of a solution of~\eqref{growth_micro_1} with  $\tilde \F_{\rg, \delta}$  instead of $\F_{\rg, \delta}$ in $G^\delta$.

To show existence of a unique solution of the full model \eqref{growth_micro_1} and \eqref{micro_model_ref_2}  we need to show a contraction property of the map $\mathcal K: \mathcal W \to \mathcal W$, where  $\F_{\rg, \delta} = \mathcal K(\tilde \F_{\rg, \delta})$ is a solution of problem  \eqref{micro_model_ref_2} and \eqref{growth_micro_1}, 
with $\tilde \F_{\rg, \delta}$ instead of $\F_{\rg, \delta}$ in equation \eqref{micro_model_ref_2} and in function $G^\delta$ in \eqref{growth_micro_1}. 

From equations for $\F_{\rg, \delta}$, using  properties of  $G^\delta$, we obtain   
$$
{\rm det} (\F_{\rg, \delta}(t,x))= {\rm det }(\F_{\rg, \delta}(0,x)) \, {\rm det }\Big( \exp\Big( \int_0^t G^\delta(x, \nabla \bu^\delta, \tilde \F_{\rg, \delta}^{-1})\,ds\Big)\Big) =\exp\Big( {\rm tr}\Big( \int_0^t G^\delta(x, \nabla \bu^\delta, \tilde \F_{\rg, \delta}^{-1})\, ds\Big)\Big) \geq 1,
$$
for $(t,x) \in [0,T]\times \overline{\Omega^\delta}$, since diagonal elements of $G^\delta$ are nonnegative.   Assumptions on $G^\delta$  imply  that $\F_{\rg, \delta}$ is piece-wise constant in each $\delta (Y_w+\xi)\cap \Omega$  for $\xi \in \Xi$, has eigenvalues greater than or equal to $1$,  and
\begin{equation}\label{bound_F_delta}
\|\F_{\rm g, \delta} \|_{L^\infty((0,T)\times \Omega^\delta)} + \|\partial_t \F_{\rm g, \delta} \|_{L^\infty((0,T)\times\Omega^\delta)}+ \|\F_{\rm g, \delta} \|_{W^{1,\infty}(0,T; L^q(\Omega^\delta))}  \leq C,
\end{equation}
with a constant $C$ independent of $\delta$. 
Since $J_g^\delta={\rm det}(\F_{\rm g, \delta}) \geq 1$ for $(t,x) \in [0,T]\times \overline{\Omega^\delta}$, we  also have
$$
\|\F^{-1}_{\rm g, \delta} \|_{ L^\infty((0,T)\times \Omega^\delta)} + \|\partial_t \F^{-1}_{\rm g, \delta} \|_{L^\infty((0,T)\times\Omega^\delta)} + \|\F^{-1}_{\rm g, \delta} \|_{W^{1,\infty}(0,T; L^q(\Omega^\delta))} \leq C.
$$

 Multiplying  the difference of \eqref{growth_micro_1} for $\F_{\rg, \delta}^1$ and $\F_{\rg, \delta}^2$ by $\F_{\rg, \delta}^1- \F_{\rg, \delta}^2$,  integrating over time variable,  as well as
 using the boundedness of $\tilde \F_{\rm g, \delta}$ and the Lipschitz continuity  and boundedness  of $G^\delta$,
 and  applying the Gronwall inequality imply 
\begin{equation}\label{estim_contr_1}
\begin{aligned}
&\|\F_{\rg, \delta}^1 - \F_{\rg, \delta}^2\|^2_{L^\infty((0,  T)\times \Omega^\delta)} \leq
  T C \big\|G^\delta(x, \nabla \bu^\delta_1, (\tilde \F^1_{\rg, \delta})^{-1}) - G^\delta(x, \nabla \bu^\delta_2, (\tilde \F^2_{\rg, \delta})^{-1})\big\|^2_{L^\infty((0,T)\times \Omega^\delta)}
\\
& \qquad \quad \leq
T \big[ C_\delta   \|{\rm{sym}}(\nabla \bu^\delta_1 (\tilde \F_{\rm g, \delta}^1)^{-1}) - {\rm{sym}}(\nabla \bu^\delta_2 (\tilde \F^2_{\rm g, \delta})^{-1})\|^2_{L^\infty(0,T; L^2(\Omega^\delta))} + C \|\tilde \F_{\rg, \delta}^1 - \tilde \F_{\rg, \delta}^2 \|^2_{L^\infty((0,T)\times\Omega^\delta)}\big],
\end{aligned}
\end{equation}
for $\tilde \F^1_{\rg, \delta}, \tilde \F^2_{\rg, \delta} \in \mathcal W$, where constant  $C$  depends on $\|\tilde \F^j_{\rg, \delta}\|_{L^\infty((0,T)\times \Omega^\delta)}$, for $j=1,2$, which is bounded by $\hat\kappa$, and
$C_\delta$ depends on $\|\tilde \F^j_{\rg, \delta}\|_{L^\infty((0,T)\times \Omega^\delta)}$ and  $\delta$.  
In the derivation of  \eqref{estim_contr_1} we  used the following estimate 
$$
\begin{aligned}
& \big\|G^\delta(x, \nabla \bu^\delta_1, (\tilde \F^1_{\rg, \delta})^{-1}) - G^\delta(x, \nabla \bu^\delta_2, (\tilde \F^2_{\rg, \delta})^{-1})\big\|^2_{ L^\infty(\Omega^\delta)}
\\
& \qquad \qquad \leq 
C \Big\|\ddashinttt_{\delta( [x/\delta]_{Y}+  Y_w)\cap \Omega} \Big| {\boldsymbol\ve}^{\rm el}(\nabla \bu^\delta_1, (\tilde\F_{\rg, \delta}^1)^{-1}) -  {\boldsymbol\ve}^{\rm el}(\nabla \bu^\delta_2, (\tilde \F_{\rg, \delta}^2)^{-1})  \Big|  d\tilde x \Big\|^2_{ L^\infty(\Omega^\delta)}
\\
& \qquad \qquad  \leq C_1 \delta^{-d}\big\| {\rm{sym}}\big(\nabla \bu^\delta_1 (\tilde \F_{\rm g, \delta}^1\big)^{-1})  -{\rm{sym}} \big(\nabla \bu^\delta_2 (\tilde \F^2_{\rm g, \delta})^{-1}\big)  \big\|^2_{ L^2(\Omega^\delta)} + C_2 \big\|\tilde \F_{\rg, \delta}^1 - \tilde \F_{\rg, \delta}^2 \big\|^2_{ L^\infty(\Omega^\delta)},
\end{aligned}
$$
for a.a.~$t \in (0,T)$. 
From  \eqref{micro_model_ref_2}, using the uniform ellipticity of $\bbE$ and estimate \eqref{estim-u-delta_1}, we obtain
\begin{equation} \label{estim_contr_2}
\begin{aligned}
\big\|{\rm{sym}}(\nabla \bu^\delta_1 (\tilde \F_{\rm g, \delta}^1)^{-1})  -{\rm{sym}} (\nabla \bu^\delta_2 (\tilde \F^2_{\rm g, \delta})^{-1})\big\|^2_{L^\infty(0,T; L^2(\Omega^\delta))}
& \leq  C \big\|\tilde \F_{\rg, \delta}^1 - \tilde  \F_{\rg, \delta}^2\big\|^2_{L^\infty((0,T)\times \Omega^\delta)}.
\end{aligned}
\end{equation}
In the derivation of \eqref{estim_contr_2} we used the  following  estimate 
$$
\begin{aligned} 
\big\|\nabla(\bu_1^\delta - \bu_2^\delta)\big[(\tilde \F_{\rm g, \delta}^1)^{-1} - (\tilde \F_{\rm g, \delta}^2)^{-1}\big]\big\|_{L^2(\Omega^\delta)}
\leq  C_{\tilde \varsigma}\big\|(\tilde \F_{\rm g, \delta}^1)^{-1}\big\|^2_{L^\infty(\Omega^\delta)} \big\|(\tilde \F_{\rm g, \delta}^2)^{-1}\big\|^2_{L^\infty(\Omega^\delta)}\big\|\tilde \F_{\rm g, \delta}^1 - \tilde \F_{\rm g, \delta}^2\big\|^2_{L^\infty(\Omega^\delta)}\\  +  \tilde \varsigma \big\|\nabla(\bu_1^\delta - \bu_2^\delta)\big\|^2_{L^2(\Omega^\delta)}
\leq \varsigma\big\|{\rm sym}\big(\nabla(\bu_1^\delta - \bu_2^\delta)(\tilde\F_{\rm g, \delta}^1)^{-1}\big)\big\|^2_{L^2(\Omega^\delta)} + C_1\big\|\tilde \F_{\rm g, \delta}^1 - \tilde \F_{\rm g, \delta}^2
\big\|^2_{L^\infty(\Omega^\delta)}
\\
\leq \varsigma\big\|{\rm sym}\big(\nabla \bu_1^\delta (\tilde\F_{\rm g, \delta}^1)^{-1}\big) - {\rm sym}\big(\nabla \bu_2^\delta(\tilde\F_{\rm g, \delta}^2)^{-1}\big)\big\|^2_{L^2(\Omega^\delta)}
 + C\big(1+\|\nabla \bu_2^\delta\|_{L^2(\Omega^\delta)}^2\big) \big\|\tilde \F_{\rm g, \delta}^1 - \tilde \F_{\rm g, \delta}^2 \big\|^2_{L^\infty(\Omega^\delta)},
\end{aligned}
$$
for any fixed $\varsigma>0$. Here we applied  \eqref{estim:Korn_gen}  with  $(\tilde\F_{\rm g, \delta}^1)^{-1}$ and $\bu_1^\delta(t) - \bu_2^\delta(t) \in V_\delta$.  

Combining estimates \eqref{estim_contr_1} and  \eqref{estim_contr_2} and considering $T$ sufficiently small we obtain that $\mathcal K$ is a contraction.  Then applying the Banach fixed point theorem yields existence of the unique solution for  \eqref{growth_micro_1} and \eqref{micro_model_ref_2}. Since the choice  of $T$ depends only on the parameters in the system and on $\delta$,  and does not depend on the solution, we can iterate over time intervals to obtain the global existence and uniqueness result for the microscopic model \eqref{growth_micro_1} and \eqref{micro_model_ref_2} for any fixed $\delta >0$.
\end{proof}

Next we prove convergence results for sequences $\{\bu^\delta\}$ and $\{\F_{\rg, \delta}\}$ as $\delta \to 0$.
\begin{lemma} \label{lem:conv_stong}
Under Assumptions~\ref{assumption}, for sequence of  solutions   $\{\bu^\delta\}$ and $\{\F_{\rg, \delta}\}$  of microscopic model  \eqref{growth_micro_1} and~\eqref{micro_model_ref_2}, up to a subsequence,  we  have the following convergence results
\begin{equation}\label{convergence_1}
 \begin{aligned} 
 & \bu^\delta \rightharpoonup \bu && \text{ weakly-$\ast$   in } \;  && L^\infty(0,T ; V), \\
  & \nabla \bu^\delta \rightharpoonup \nabla \bu + \nabla_y \bu_1 && \text{ two-scale},  \;&&  \bu_1 \in L^2((0,T)\times \Omega; H^1_{\rm per}(Y)/ \mathbb R), \\
  & \F_{\rg, \delta} \rightharpoonup \F_\rg  && \text{ weakly-$\ast$ in } \; && L^\infty((0, T) \times \Omega),\\
  & \F_{\rg, \delta} \rightharpoonup \F_\rg  && \text{ weakly-$\ast$ in } \; && W^{1,\infty}(0, T; L^q(\Omega)), \quad \text{ for any }  q \in (1, \infty),
   \end{aligned} 
  \end{equation} 
  where we identify $\bu^\delta$ with its extension from $\Omega^\delta$  into $\Omega$ and for $\F_{\rg, \delta}$ consider a trivial (constant) extension from $\delta(Y_w + \xi)$ into $\delta(Y+\xi)$ for $\xi \in \Xi_\delta$. The space $V$ is defined in the same way as $V_\delta$, with $\Omega^\delta$ replaced by $\Omega$.
  
  Assuming additionally  that 
  \begin{equation}\label{bound_bu}
   \|\mathcal T^\delta(\nabla\bu^\delta)\|_{L^\infty((0,T)\times \Omega; L^2(Y_w))} \leq C,
  \end{equation}
  with a constant $C$ independent of $\delta$, we have
  \begin{equation}\label{convergence_2}
 \begin{aligned} 
    \F_{\rg, \delta} & \to \F_\rg  && \text{ strongly in}  && L^2((0,T)\times \Omega), \\
    \int_{Y_w} \mathcal T^\delta ({\rm sym}(\nabla \bu^\delta \F_{\rg, \delta}^{-1})) \, dy
    & \to  \int_{Y_w}  {\rm sym}((\nabla \bu + \nabla_y \bu_1) \F_{\rg}^{-1}) \, dy && \text{ strongly in}  && L^2((0,T)\times \Omega),
    \\
   \int_{Y_w} \mathcal T^\delta \big(\mathbb E^\delta(x){\rm sym}(\nabla \bu^\delta \F_{\rg, \delta}^{-1})\big) \, dy & \to  \int_{Y_w} \mathbb E(\cdot,y) \, {\rm sym}((\nabla \bu + \nabla_y \bu_1) \F_{\rg}^{-1}) \, dy && \text{ strongly in}  && L^2((0,T)\times \Omega),
  \end{aligned} 
  \end{equation} 
  where $\mathcal T^\delta$ is the periodic unfolding operator defined below. 
\end{lemma}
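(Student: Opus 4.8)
The plan is to extract the weak limits in \eqref{convergence_1} from the $\delta$-uniform bounds of Theorem~\ref{th:exist} by compactness, and then to obtain the strong convergences in \eqref{convergence_2} by playing the growth ODE \eqref{growth_micro_1} against the energy identity for the elliptic problem \eqref{micro_model_ref_2}, closing the arising elliptic--ODE coupling by a bootstrap on short time intervals in the spirit of the proof of Theorem~\ref{th:exist}. For the weak limits: the $\delta$-uniform extension estimates recalled in the proof of Lemma~\ref{Korn_gener}, together with \eqref{estim_apriori_1}, give $\|\bu^\delta\|_{L^\infty(0,T;V)}\le C$, so a subsequence converges weakly-$\ast$ in $L^\infty(0,T;V)$; the standard two-scale / periodic-unfolding compactness for bounded $H^1$-sequences then yields $\nabla\bu^\delta\rightharpoonup\nabla\bu+\nabla_y\bu_1$ two-scale with $\bu_1\in L^2((0,T)\times\Omega;H^1_{\rm per}(Y)/\mathbb R)$ (after the trivial extension of $\bu^\delta$ across the cells). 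The bounds on $\F_{\rg,\delta}$ in $L^\infty((0,T)\times\Omega)$ and $W^{1,\infty}(0,T;L^q(\Omega))$ from \eqref{estim_apriori_1}, with the constant extension from $\delta(Y_w+\xi)$ to $\delta(Y+\xi)$, give the two remaining weak-$\ast$ limits; as $\F_{\rg,\delta}$ is cell-wise constant, the two-scale limit of $\mathcal T^\delta(\F_{\rg,\delta})$ is $y$-independent on $Y_w$ and equals $\F_\rg$. Finally, the uniform-in-$\delta$ coercivity furnished by Lemma~\ref{Korn_gener} and the H\"older-in-$t$ dependence of \eqref{var_inequal} on $\F_{\rg,\delta}$ and the data (Assumption~\ref{assumption}) make $\{\bu^\delta\}$ equicontinuous in $t$ into $L^2(\Omega)$; with Rellich's theorem this gives, along a further subsequence, $\bu^\delta\to\bu$ in $C([0,T];L^2(\Omega))$ and hence $\mathcal T^\delta(\bu^\delta)\to\bu$ strongly in $L^2((0,T)\times\Omega\times Y)$.

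\emph{Reducing the strong convergence to that of the strain.} Unfolding \eqref{growth_micro_1}, and using that $\F_{\rg,\delta}$ is constant on each cell, one gets on $\Omega\times Y_w$
\[
\partial_t\,\mathcal T^\delta(\F_{\rg,\delta})=\mathcal T^\delta(G^\delta)\,\mathcal T^\delta(\F_{\rg,\delta}),\qquad \mathcal T^\delta(\F_{\rg,\delta})|_{t=0}=\bI,
\]
where $\mathcal T^\delta(G^\delta)$ is the cut-off \eqref{def_G} of a $1$-Lipschitz function (the matrix positive-part map $[\,\cdot\,]_+$ composed with the affine growth law $\eta_\ve[\,\cdot-\boldsymbol{\tau}_\ve]_+$, resp.\ $\eta_\sigma[\,\cdot-\boldsymbol{\tau}_\sigma]_+$) of the unfolded cell-averaged elastic strain, the latter being an algebraic expression in $\int_{Y_w}\mathcal T^\delta(\nabla\bu^\delta)\,dy$, $\mathcal T^\delta(\F_{\rg,\delta})^{-1}$ (and, in the stress-based case, $\bbE(x,y)$). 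Since $[\,\cdot\,]_+$ and the cut-off \eqref{def_G} are $1$-Lipschitz on symmetric matrices, since $\det\F_{\rg,\delta}\ge 1$ makes $\F_{\rg,\delta}$ and $\F_{\rg,\delta}^{-1}$ uniformly bounded, and since $\F_\rg$ is bounded, one tests the difference between the unfolded ODE and the homogenised growth ODE for $\F_\rg$ (the latter being well-posed on short intervals by a Banach fixed point exactly as in Theorem~\ref{th:exist}) against $\mathcal T^\delta(\F_{\rg,\delta})-\F_\rg$, integrates over $\Omega\times Y_w$ and in time, and applies Gronwall's inequality. This controls $\|\F_{\rg,\delta}-\F_\rg\|_{L^\infty(0,t;L^2(\Omega))}^2$ by $C\int_0^t\big\|\int_{Y_w}\big(\mathcal T^\delta(\nabla\bu^\delta)-\nabla\bu-\nabla_y\bu_1\big)dy\big\|_{L^2(\Omega)}^2\,ds$ plus contributions that vanish with $\delta$ by \eqref{bound_bu} and $\mathcal T^\delta(\bu^\delta)\to\bu$. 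Hence it suffices to show that the cell-averaged displacement gradient $\int_{Y_w}\mathcal T^\delta(\nabla\bu^\delta)\,dy$ converges strongly in $L^2((0,T)\times\Omega)$ to $\int_{Y_w}(\nabla\bu+\nabla_y\bu_1)\,dy$.

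\emph{The elliptic part and closing the coupling.} For a.e.\ fixed $t$, \eqref{micro_model_ref_2} is an elliptic system with the $Y$-periodically oscillating coefficient $\bbE(x,x/\delta)$ modulated by the cell-wise constant $\F_{\rg,\delta}(t)$; testing \eqref{var_inequal} with $\bu^\delta$ gives an energy identity whose data terms pass to the limit using $\bu^\delta\to\bu$ strongly in $L^2$, $\mathcal T^\delta(\F_{\rg,\delta})\to\F_\rg$ strongly, the trace estimate, and Assumption~\ref{assumption}. Together with the uniform positive definiteness of $\bbE$ and the weak two-scale convergence of $\mathcal T^\delta({\rm sym}(\nabla\bu^\delta\F_{\rg,\delta}^{-1}))$, this upgrades the latter to strong convergence in $L^2((0,T)\times\Omega\times Y_w)$ to ${\rm sym}((\nabla\bu+\nabla_y\bu_1)\F_\rg^{-1})$; more precisely, continuous dependence of the homogenised elliptic problem on its (strongly convergent) coefficient gives, uniformly in $t$, $\limsup_\delta\|\mathcal T^\delta(\nabla\bu^\delta)(t)-(\nabla\bu+\nabla_y\bu_1)(t)\|_{L^2(\Omega\times Y)}^2\le C\,\limsup_\delta\|\F_{\rg,\delta}(t)-\F_\rg(t)\|_{L^2(\Omega)}^2$. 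Inserting this into the Gronwall bound of the previous step (and using reverse Fatou, licensed by the $\delta$-uniform bound \eqref{bound_bu}) gives $a\le C\,T_1\,a$ with $a:=\limsup_\delta\|\F_{\rg,\delta}-\F_\rg\|_{L^\infty(0,T_1;L^2(\Omega))}^2$; choosing $T_1$ small (depending only on the data, as at the end of the proof of Theorem~\ref{th:exist}) forces $a=0$, and iterating over $[kT_1,(k+1)T_1]$, each time starting from strongly convergent data, yields $\F_{\rg,\delta}\to\F_\rg$ strongly in $L^2((0,T)\times\Omega)$. The remaining two limits in \eqref{convergence_2} then follow from the strong two-scale convergence of ${\rm sym}(\nabla\bu^\delta\F_{\rg,\delta}^{-1})$, the strong convergence of $\F_{\rg,\delta}^{-1}$, and the continuity of $\bbE(\cdot,y)$.

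I expect the main obstacle to be exactly this two-way coupling: strong convergence of $\F_{\rg,\delta}$ needs the homogenised elliptic operator, whose coefficient in turn involves the limit growth tensor, so the quantitative continuous-dependence estimate $\|\mathcal T^\delta(\nabla\bu^\delta)(t)-(\nabla\bu+\nabla_y\bu_1)(t)\|^2\le C\|\F_{\rg,\delta}(t)-\F_\rg(t)\|^2+o_\delta(1)$, uniform in $t$, must be established with enough care that the short-interval bootstrap closes. Everything else — the weak compactness, the passage to the limit in the $1$-Lipschitz nonlinearity $[\,\cdot\,]_+$, and the matrix-product manipulations — is then routine once the strong $L^2$ convergence of $\F_{\rg,\delta}$ and of the averaged strain is available.
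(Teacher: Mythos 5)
Your treatment of the weak limits \eqref{convergence_1} matches the paper's (uniform bounds from Theorem~\ref{th:exist} plus standard two-scale/unfolding compactness), and the extra claim $\bu^\delta\to\bu$ in $C([0,T];L^2(\Omega))$ is plausible. The strong convergences \eqref{convergence_2}, however, are obtained in the paper by a different and, I believe, essentially unavoidable route: one proves \emph{compactness} of $\{\mathcal T^\delta(\F_{\rg,\delta})\}$ and of the averaged strain directly via the Fr\'echet--Kolmogorov/Simon criterion, by testing the difference of \eqref{micro_model_ref_2} at $x$ and at a shifted point $x+\hat h$ with the difference of the corresponding solutions (localized by a cutoff) and feeding the resulting translation estimate into a Gronwall argument for the shifted growth ODE; the limit objects are identified only \emph{afterwards}. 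Your plan instead compares $\mathcal T^\delta(\F_{\rg,\delta})$ and $\mathcal T^\delta(\nabla\bu^\delta)$ directly with the solution $(\bu,\bu_1,\F_\rg)$ of the macroscopic problem. This is circular as written: the characterization of $(\bu,\bu_1)$ as the solution of the homogenized elliptic problem with coefficient $\F_\rg$, and of $\F_\rg$ as the solution of the homogenized growth ODE, is precisely what one cannot assert before the strong convergence of $\F_{\rg,\delta}$ is known, because identifying the two-scale limit of the products $\nabla\bu^\delta\F_{\rg,\delta}^{-1}$ and of the nonlinearity $G^\delta$ requires that strong convergence (this is why the paper's Remark~3.1(ii) presents the energy-identity route only conditionally, ``by assuming first the strong convergence of $\F_{\rg,\delta}$''). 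One could in principle recast your argument as a direct comparison with an independently constructed solution of the macroscopic system, but the well-posedness of that system is itself only established in the paper under the additional hypothesis \eqref{bound_macro}, and you do not supply it.

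Even granting the limit problem, the pivotal inequality $\limsup_\delta\|\mathcal T^\delta(\nabla\bu^\delta)(t)-(\nabla\bu+\nabla_y\bu_1)(t)\|^2_{L^2(\Omega\times Y)}\le C\,\limsup_\delta\|\F_{\rg,\delta}(t)-\F_\rg(t)\|^2_{L^2(\Omega)}$, uniformly in $t$, is asserted in one sentence but is the hardest step of the whole lemma. It packages together (a) a corrector (strong two-scale convergence of the gradient) statement for the frozen-coefficient oscillating problem, with a rate uniform in $t$ for merely $L^\infty$, time-dependent coefficients, and (b) a continuous-dependence estimate in which the coefficient perturbation is measured only in $L^2(\Omega)$; the latter requires pairing $\F_{\rg,\delta}^{-1}-\F_\rg^{-1}$ against $\nabla\bu^\delta$ in dual mixed norms, which is exactly the role of \eqref{bound_bu} and of the $L^2(\Omega_{3h};L^\infty(Y_w))$ norms in the paper's translation estimates --- note that the analogous constant $C_\delta\sim\delta^{-d}$ in \eqref{estim_contr_1} of Theorem~\ref{th:exist} is \emph{not} $\delta$-uniform, so the contraction machinery of that theorem cannot simply be recycled ``in the same spirit''. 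Finally, the closing step $a\le CT_1a$ with $a=\limsup_\delta\sup_{t\le T_1}\|\F_{\rg,\delta}-\F_\rg\|^2$ interchanges $\limsup_\delta$ with $\sup_t$; reverse Fatou handles the time integral but not the supremum, and making the estimates hold along one subsequence uniformly in $t$ amounts to proving equicontinuity in $t$ --- which returns you to a compactness argument of the Fr\'echet--Kolmogorov/Aubin--Lions type, i.e.\ to the paper's route.
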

\begin{proof}
The first four convergences follow directly from  estimates \eqref{estim_apriori_1} and compactness results for the weak-$\ast$  and two-scale convergences,    see~e.g.~\cite{Allaire92, Nguetseng89} or Appendix for the definition and properties of the two-scale convergence.

To show the strong convergence of $\F_{\rg, \delta}$ we shall use the periodic unfolding operator $\mathcal T^\delta: L^p((0,T)\times \Omega^\delta) \to L^p((0,T)\times \Omega\times Y_w)$, with $1\leq p \leq \infty$, defined as 
\begin{equation}\label{unfolding_Yw}
\mathcal T^\delta (v)(t,x,y) = \begin{cases} v(t,\delta[x/\delta]_Y+ \delta y) \quad &  \text{ for a.e. }  \; (x,y) \in (\Omega\setminus \Lambda_\delta) \times Y_w,\, t \in (0,T),\\
0 & \text{ for a.e. } \; (x,y) \in \Lambda_\delta\times Y_w, \; t \in (0,T),
\end{cases} 
\end{equation}
where  $[x/\delta]_Y$ denotes the unique integer combination, such that  $x/\delta - [x/\delta]_Y$ belongs to $Y_w$ for $x \in \Omega^\delta \setminus \Lambda_\delta$,  see  e.g.~\cite{CDDGZ, CDG_book} and Appendix for more details. 

Applying the periodic unfolding operator $\mathcal T^\delta $ to  \eqref{growth_micro_1} yields 
\begin{equation} \label{unfold_growth}
\partial_t \mathcal T^\delta (\F_{\rg, \delta})= 
 G^\delta(x,\mathcal T^\delta(\nabla \bu^\delta), \mathcal T^\delta ( \F_{\rg, \delta}^{-1}) ) \, \mathcal T^\delta(\F_{\rg, \delta}) \quad \text{ in } (0,T)\times \Omega \times Y_w, 
\end{equation}
where 
$$
\tilde G^\delta(x,\mathcal T^\delta(\nabla \bu^\delta), \mathcal T^\delta ( \F_{\rg, \delta}^{-1}))  =  \eta_\sigma \Big[ \ddashinttt_{Y_w} \boldsymbol\sigma(x,y, \mathcal T^\delta(\nabla \bu^\delta), \mathcal T^\delta ( \F_{\rg, \delta}^{-1} )) \, dy - \boldsymbol{\tau}_\sigma \Big]_{+}, 
$$
or 
$$
\tilde G^\delta(x,\mathcal T^\delta(\nabla \bu^\delta), \mathcal T^\delta ( \F_{\rg, \delta}^{-1}))  = \eta_\varepsilon \Big[ \ddashinttt_{Y_w} \boldsymbol{\varepsilon}^{\rm el}(\mathcal T^\delta(\nabla \bu^\delta), \mathcal T^\delta ( \F_{\rg, \delta}^{-1}) ) \, dy - \boldsymbol{\tau}_\varepsilon \Big]_{+},
$$
and $G^\delta$ is defined in terms of $\tilde G^\delta$ as in \eqref{def_G}.  
Notice that $G^\delta$ is independent of $y$ and hence $\mathcal T^\delta (\F_{\rg, \delta})$ depends on $t\in (0,T)$ and $x \in \Omega$ and is constant in $y\in Y_w$.

We shall show that $\{ \mathcal T^\delta ( \F_{\rg, \delta})\} $ converges strongly in $L^2((0,T)\times \Omega \times Y_w)$ by applying the Fr\'echet-Kolmogorov compactness theorem, \cite{Necas}, and its modification proposed by Simon, \cite{Simon_1986}. 
The uniform boundedness of  $G^\delta$ implies
\begin{equation}\label{estim_bound_FFt}
\|\mathcal T^\delta (\F_{\rg, \delta}) \|_{L^\infty((0,T)\times \Omega\times Y_w)} + \|\partial_t \mathcal T^\delta (\F_{\rg, \delta}) \|_{L^\infty((0,T)\times \Omega\times Y_w)} \leq C,
\end{equation}
where the constant $C$ is independent of $\delta$. 

 Denote  $\Omega_h = \{ x \in \Omega :  {\rm dist} (\partial \Omega, x) > h\}$ for  $h>0$.
Considering \eqref{growth_micro_1} for $x+\hat h$ and $x$, with    $\hat h=\delta \xi$, for $\xi \in \Xi$, and $|\hat h| \leq h$, applying the  unfolding operator $\mathcal T^\delta$,  multiplying the resulting equations by  $\mathcal T^\delta\big(\F_{\rg, \delta}(x+\hat h)\big) - \mathcal T^\delta\big(\F_{\rg, \delta}(x)\big)$,  integrating over $(0,t)$, taking supremum over $Y_w$, and then  integrating over $\Omega_{3h}$, we obtain    \begin{equation}\label{strong_estim_1}
\begin{aligned}
&\big\| \mathcal T^\delta (\F_{\rg, \delta}(\cdot +\hat h)) -
 \mathcal T^\delta (\F_{\rg, \delta})\big\|^2_{L^2(\Omega_{3h}; L^\infty(Y_w))}
 \leq C \int_0^t \Big(\big\| \mathcal T^\delta (\F_{\rg, \delta}(\cdot +\hat h)) - 
 \mathcal T^\delta (\F_{\rg, \delta})\big\|^2_{L^2(\Omega_{3h}; L^\infty(Y_w))}  \\
 & \; +  \Big\|\ddashinttt_{Y_w} \Big[\boldsymbol \sigma\big(\cdot+\hat h, y, \mathcal T^{\delta}(\nabla \bu^{\delta}(\cdot+\hat h)), \mathcal T^{\delta}( \F^{-1}_{\rg, \delta}(\cdot+\hat h))\big) - \boldsymbol \sigma\big(\cdot, y,\mathcal T^{\delta}(\nabla \bu^{\delta}), \mathcal T^{\delta}( \F^{-1}_{\rg, \delta})\big) \Big] dy \Big\|^2_{L^2(\Omega_{3h})} \Big)ds,
\end{aligned}
\end{equation}
or with $\boldsymbol{\varepsilon}^{\rm el}$ instead of $\boldsymbol {\sigma}$ if we consider the second case for the growth rate. Notice that for the ease of notations in the formulas here and below we often do not write explicitly the dependence of $\F_{\rg, \delta}$,  $\bu^\delta$, $P_1$ and $P_2$ on $t \in [0,T]$.

To estimate the second term on the right-hand site of \eqref{strong_estim_1},  we consider equations \eqref{micro_model_ref_2} for $x$ and $x+\hat h$ and take $(\bu^{\delta}(t,x+\hat h) -  \bu^{\delta}(t,x))\rho_h^2(x)$ as a test function, where $\rho_h \in C^1_0(\Omega)$ with  $\rho_h (x)= 1$ in $\Omega_{3h}$ and $\rho_h(x) =0$ in $\Omega \setminus \Omega_{2h}$.
Then applying the periodic unfolding operator $\mathcal T^\delta$ and using   \eqref{bound_bu}, together with the uniform boundedness of $\F_{\rg, \delta}$ and assumptions on $\mathbb E$, $P_1$, and $P_2$, yield
\begin{equation}\label{estim_string_bu_11}
\begin{aligned} 
&\Big\langle  \mathcal T^{\delta}(J_\rg^\delta \F_{\rg, \delta}^{-1}\mathbb E^\delta(x+\hat h)) {\rm sym}(\mathcal T^{\delta}(\nabla \bu^\delta \F_{\rg, \delta}^{-1}(x+\hat h)))
 - \mathcal T^{\delta}(J_\rg^\delta  \F_{\rg, \delta}^{-1} \mathbb E^\delta(x)) {\rm sym}(\mathcal T^{\delta}(\nabla \bu^\delta \F_{\rg, \delta}^{-1}(x))) \\
& \quad + \mathcal T^{\delta} (J_\rg^\delta \F_{\rg, \delta}^{-1} P_1(x+\hat h)) - \mathcal T^{\delta} (J_\rg^\delta \F_{\rg, \delta}^{-1}P_1(x)), \big[\mathcal T^{\delta} (\nabla \bu^{\delta}(x+\hat h) ) -  \mathcal T^\delta(\nabla \bu^{\delta}(x)) \big]\mathcal T^{\delta} (\rho_h^2) \Big\rangle_{\Omega_{2h}\times Y_w}\\
 & \quad  + \Big \langle \mathcal T^{\delta}(J_{\rg}^\delta P_2^\delta\F_{\rg, \delta}^{-T}   N(x+\hat h))- \mathcal T^{\delta}(J_{\rg}^\delta P_2^\delta  \F_{\rg, \delta}^{-T}N(x)),  \big[\mathcal T^{\delta}(\bu^\delta(x+\hat h)) - \mathcal T^\delta(\bu^\delta(x))\big] \mathcal T^{\delta} (\rho_h^2)\Big \rangle_{\Omega_{2h} \times \Gamma}   \\
 & = 
- \Big
\langle  \mathcal T^{\delta}(J_\rg^\delta \F_{\rg, \delta}^{-T}\nabla P_1(x+\hat h))- \mathcal T^{\delta}(J_\rg^\delta \F_{\rg, \delta}^{-T} \nabla P_1(x)), \big[\mathcal T^{\delta}(\bu^\delta(x+\hat h)) - \mathcal T^\delta(\bu^\delta(x))\big] \mathcal T^{\delta} (\rho_h^2) \Big\rangle_{\Omega_{2h}\times Y_w} \\
&\quad + \Big
\langle \mathcal T^{\delta}( J_\rg^\delta \F_{\rg, \delta}^{-1} \mathbb E^\delta (x+\hat h)) \big[\bI -{\rm sym} (\mathcal T^{\delta}(\F_{\rg, \delta}^{-1}(x+\hat h)))\big]
\\ &\qquad \quad  -
\mathcal T^{\delta}( J_\rg^\delta \F_{\rg, \delta}^{-1} \mathbb E^\delta(x) )\big[\bI -{\rm sym} (\mathcal T^{\delta}(\F_{\rg, \delta}^{-1}(x)))\big], \big[ \mathcal  T^{\delta}( \nabla \bu^\delta(x+\hat h) )- \mathcal  T^{\delta}(\nabla\bu^\delta(x))\big] \mathcal T^{\delta} (\rho_h^2) \Big\rangle_{\Omega_{2h}\times Y_w}
\\
& \quad + \kappa(h),
\end{aligned} 
\end{equation}
where $\kappa(h) \to 0$ as $h\to 0$. 
Here we used  that $|\hat h |\leq h$,  that $\rho_h(x)=0$ and $\rho_h(x\pm \hat h) = 0$ for $x \in \Lambda_\delta$,   and the following  estimate
\begin{equation}\label{estim_rhoh_deriv}
\begin{aligned} 
&\big\|
\mathcal T^{\delta} \big( \bu^{\delta}(\cdot+\hat h) -  \bu^{\delta}\big) \mathcal T^{\delta} (\rho_h \nabla \rho_h) \big\|_{L^2(\Omega_{2h}\times Y_w)}\\
& \qquad  \leq C \frac {|\hat h|} {h} \big\|  \mathcal T^{\delta}(\nabla\bu^\delta)\big\|_{L^2((\Omega_{2h} \setminus \Omega_{3h})\times Y_w)}  \leq C |\Omega_{2h} \setminus \Omega_{3h}|^{1/ 2} \| \mathcal T^{\delta}(\nabla\bu^\delta)\|_{L^\infty((0,T)\times \Omega; L^2(Y_w))}  \leq \kappa(h),
\end{aligned} 
\end{equation}
for a.a.~$t \in (0,T)$.
Then using \eqref{bound_bu}, together with the uniform boundedness of $\F_{\rg, \delta}$,  the  assumptions on $\mathbb E$, $P_1$, and $P_2$,  and  that  integrals over $(\Omega_{2h} \setminus \Omega_{3h}) \times Y_w$ can be estimated by $|\Omega_{2h} \setminus \Omega_{3h}|$, from \eqref{estim_string_bu_11}  we obtain
\begin{equation} \label{estim_diff_elast}
\begin{aligned}
 \big\|{\rm sym}\big(\mathcal T^{\delta} (\nabla \bu^\delta \F^{-1}_{\rg, \delta}(\cdot+\hat h)) - 
\mathcal T^{\delta} (\nabla \bu^\delta \F^{-1}_{\rg, \delta})\big)\big\|_{L^2(\Omega_{3h}\times Y_w)}^2 & \leq \kappa(h) \\
  +  & C \big\|  \mathcal T^{\delta}(\F_{\rg, \delta} (\cdot+\hat h)) -  \mathcal T^{\delta}(\F_{\rg, \delta}) \big \|^2_{L^2(\Omega_{3h}; L^\infty(Y_w))},
\end{aligned}
\end{equation}
for a.a.~$t \in (0,T)$, where $\kappa(h) \to 0$ as $h\to 0$ and $|\hat h|\leq h$.  Similar as in the proof of Theorem~\ref{th:exist}, 
in the derivation of  \eqref{estim_diff_elast} we used  the following estimate 
$$
\begin{aligned}
& \big\|\mathcal T^{\delta} (\nabla \bu^{\delta}(\cdot+\hat h) -  \nabla \bu^{\delta})\, \mathcal T^{\delta}(\rho_h)\big[\mathcal T^{\delta} (\F^{-1}_{\rg, \delta}(\cdot +\hat h)) -  \mathcal T^{\delta}(\F^{-1}_{\rg, \delta}) \big]\big\|_{L^1(\Omega_{2h}; L^2(Y_w))} \\
& \quad \leq  \tilde\varsigma 
\|\nabla (\bu^{\delta}(\cdot +\hat h) -  \bu^\delta) \rho_h \|^2_{L^2(\Omega^\delta) }
 + C_{\tilde \varsigma} \big \|\mathcal T^{\delta} (\F^{-1}_{\rg, \delta}(\cdot +\hat h) ) -  \mathcal T^{\delta}(\F^{-1}_{\rg, \delta})\big \|^2_{L^2(\Omega_{2h}; L^\infty(Y_w))}
\\
& \quad \leq \varsigma \big\|{\rm {sym}}\big(\nabla (\bu^{\delta}(\cdot +\hat h) - \bu^\delta) \F^{-1}_{\rg, \delta}\big) \rho_h\big\|^2_{L^2(\Omega^\delta)}  + C_{\tilde \varsigma} \big \|\mathcal T^{\delta} (\F_{\rg, \delta}(\cdot +\hat h)) -  \mathcal T^{\delta}  (\F_{\rg, \delta})\big \|^2_{L^2(\Omega_{2h}; L^\infty(Y_w))} + \kappa(h)\\
&  \quad \leq \varsigma \big\|{\rm {sym}}\big(\nabla \bu^{\delta}(\cdot +\hat h) \F^{-1}_{\rg, \delta}(\cdot + \hat h)- \nabla \bu^\delta \F^{-1}_{\rg, \delta}\big) \rho_h\big\|^2_{L^2(\Omega^\delta)}  + \kappa(h) 
\\
& \quad \qquad + \varsigma \big\|{\rm {sym}}\big(\nabla \bu^{\delta}(\cdot +\hat h)\big[\F^{-1}_{\rg, \delta} (\cdot +\hat h)- \F^{-1}_{\rg, \delta}\big]\big)  \rho_h\big\|^2_{L^2(\Omega^\delta)}
 +  C \big \|\mathcal T^{\delta} (\F_{\rg, \delta}(\cdot +\hat h) ) -  \mathcal T^{\delta}  (\F_{\rg, \delta})\big \|^2_{L^2(\Omega_{2h}; L^\infty(Y_w))} \\
& \quad  \leq \varsigma  \big\|{\rm {sym}}\big(\mathcal T^\delta(\nabla \bu^{\delta} \F^{-1}_{\rg, \delta}(\cdot +\hat h))- \mathcal T^\delta(\nabla \bu^\delta \F^{-1}_{\rg, \delta})\big) \big\|^2_{L^2(\Omega_{2h}\times Y_w)}
 \\ & \quad \qquad
 + C  \big \|\mathcal T^\delta(\F_{\rg, \delta} (\cdot +\hat h))- \mathcal T^\delta(\F_{\rg, \delta})  \big\|^2_{L^2(\Omega_{2h}; L^\infty(Y_w))} + \kappa(h),
\end{aligned}
$$
for any fixed $\varsigma >0$, a.a.~$t \in (0,T)$,  and $\kappa(h) \to 0$ as $h \to 0$.
In the derivation of the last estimate  we used an estimate similar to \eqref{estim_rhoh_deriv},  the properties of the unfolding operator and estimate \eqref{estim:Korn_gen} applied  to $(\bu^{\delta}(\cdot +\hat h) -  \bu^\delta) \rho_h$ and $\F^{-1}_{\rg, \delta}$. Notice that  $(\bu^{\delta}(\cdot +\hat h) -  \bu^\delta) \rho_h =0 $ on $\partial \Omega$.

Using \eqref{estim_diff_elast} in \eqref{strong_estim_1}, together with the regularity of $\mathbb E$ with respect to the first variable, and applying the Gr\"onwal inequality yields
\begin{equation}\label{diff_growth_2}
\big\|  \mathcal T^{\delta}(\F_{\rg, \delta} (\cdot, \cdot+\hat h)) -  \mathcal T^{\delta}(\F_{\rg, \delta}) \big\|^2_{L^\infty(0,T; L^2(\Omega_{3h}; L^\infty(Y_w)))} \leq \kappa(h),
\end{equation}
and, using \eqref{estim_diff_elast},  also 
\begin{equation} \label{estim_diff_elast_2}
\begin{aligned}
\big\|{\rm sym}\big(\mathcal T^{\delta} (\nabla \bu^\delta \F^{-1}_{\rg, \delta}(\cdot, \cdot+\hat h)) - 
\mathcal T^{\delta} (\nabla \bu^\delta \F^{-1}_{\rg, \delta})\big) \big\|_{L^\infty(0,T; L^2(\Omega_{3h}\times Y_w))}^2 \leq \kappa(h),
\end{aligned}
\end{equation}
where $\kappa(h) \to 0$ as  $h \to 0$ and $|\hat h| \leq h$. 
In a similar way, using  the uniform boundedness of $\partial_t \F_{\rg, \delta}$, and hence also of $\partial_t \mathcal T^{\delta}(\F_{\rg, \delta})$,   and uniform continuity, with respect to the time variable, of $P_1$, $\nabla P_1$, $P_2$, and ${\bf f}$,  we  obtain
\begin{equation} \label{estim_diff_elast_3}
\begin{aligned}
\big\|{\rm sym}\big(\mathcal T^{\delta} (\nabla \bu^\delta \F^{-1}_{\rg, \delta})(\cdot+\tilde\tau, \cdot, \cdot ) -
\mathcal T^{\delta} (\nabla \bu^\delta \F^{-1}_{\rg, \delta})\big)\big\|_{L^2((0,T-\tilde \tau)\times\Omega\times Y_w)}^2 \leq \tilde \kappa(\tilde \tau), \quad \text{ with } \; \tilde \kappa(\tilde \tau) \to 0 \; \text{ as } \; \tilde \tau \to 0.
\end{aligned}
\end{equation}
From the definition of the periodic unfolding operator, 
for any $\tilde h \in \mathbb R^d$, with $|\tilde h| < h$, we have 
\begin{equation*} 
\begin{aligned} 
& \big\| \mathcal T^\delta (\F_{\rg, \delta})(\cdot, \cdot +\tilde  h, \cdot) -
 \mathcal T^\delta (\F_{\rg, \delta}) \big\|^2_{L^2(\Omega_{3h, T}\times Y_w)}
  \leq |Y| \sum_{k \in \{0, 1\}^d}
\big \| \F_{\rg, \delta}(\cdot, \cdot +\delta (k_b +[\tilde h/\delta]_Y)) -
 \F_{\rg, \delta}\big\|^2_{L^2(\Omega^\delta_{3h, T})}
 \\
 & \leq \sum_{k \in \{0, 1\}^d}
\big \| \mathcal T^\delta(\F_{\rg, \delta}(\cdot, \cdot +\hat h)) -
 \mathcal T^\delta(\F_{\rg, \delta})\big\|^2_{L^2(\Omega_{3h, T}\times Y_w)}
 \leq \sum_{k \in \{0, 1\}^d}
 \big\| \mathcal T^\delta(\F_{\rg, \delta}(\cdot, \cdot +\hat h)) -
 \mathcal T^\delta(\F_{\rg, \delta})\big\|^2_{L^2(\Omega_{3h, T}; L^\infty (Y_w))},
\end{aligned} 
\end{equation*} 
where  $\hat h = \delta (k_b +[\tilde h/\delta]_Y)$, with $k_b= \sum_{j=1}^d k_j b_j$  and $|\hat h|\leq h$  for sufficiently small~$\delta$, and $\Omega_{3h, T} = (0,T)\times \Omega_{3h}$.
Then for any $h>0$ there exists $\delta_0$, such that $|\hat h| = |\delta (k_b +[\tilde h/\delta]_Y)| \leq h$ for all $\delta \leq \delta_0$.
Considering \eqref{diff_growth_2} for such $\hat h$ yields
\begin{equation} \label{estim_Kolg}
\big\| \mathcal T^\delta (\F_{\rg, \delta})(\cdot, \cdot +\tilde  h, \cdot) -
 \mathcal T^\delta (\F_{\rg, \delta}) \big\|^2_{L^2((0,T)\times\Omega_{3h}\times Y_w)} \leq  \kappa(h),
\end{equation} 
where $\kappa(h) \to 0$ as $ h \to 0$, and   hence as $|\tilde h| \to 0$, 
for all $\delta\leq \delta_0$. For $\delta > \delta_0$ we have a finite number of members  of the sequence  and for each of them the continuity of the $L^2$-norm of  $L^2$-functions, see e.g.~\cite{Alt_FA},   ensures \eqref{estim_Kolg} for an appropriate $\tilde h$, with $|\tilde h| < h$. Then  from the finite number of such $\tilde h$ considering the smallest one 
 implies  the property \eqref{estim_Kolg} for all $\delta >0$. 
Similarly we obtain 
\begin{equation} \label{estim_diff_elast_4}
\begin{aligned}
\big\|{\rm sym}\big(\mathcal T^{\delta} (\nabla \bu^\delta \F^{-1}_{\rg, \delta})(\cdot, \cdot+\tilde h,\cdot) -
\mathcal T^{\delta} (\nabla \bu^\delta \F^{-1}_{\rg, \delta})\big)\big\|_{L^2((0,T)\times\Omega_{3h}\times Y_w)}^2 \leq \kappa(h),
\end{aligned}
\end{equation}
for  $\tilde h \in \mathbb R^d$, with  $|\tilde h|<h$, and all $\delta>0$, where $\kappa(h) \to 0$ as $h \to 0$.
Thus  using the uniform boundedness of $\F_{\rg, \delta}$, assumption \eqref{bound_bu}, and the fact that $\mathcal T^\delta(\F_{\rg, \delta})$ is constant in $y\in Y_w$, and applying  the   Fr\'echet-Kolmogorov and Simon compactness theorems, see~\cite{Simon_1986}, we obtain  the strong convergence of $\mathcal T^\delta(\F_{\rg, \delta})$  to $\tilde \F_\rg$ in $L^2((0,T)\times \Omega \times Y_w)$  and of 
$\int_{Y_w} {\rm sym}\big(\mathcal T^{\delta} (\nabla \bu^\delta \F^{-1}_{\rg, \delta}) ) dy$ in $L^2((0,T)\times \Omega)$. 
The properties of $\mathcal T^\delta$ and  that $\tilde \F_\rg$ and  $\mathcal T^{\delta} (\F_{\rg, \delta})$ are constant in~$y\in Y_w$ imply
$$
\begin{aligned} 
& \int_{\Omega_T} \F_{\rg} \psi \, dx dt = 
\lim\limits_{\delta \to 0 } \int_{\Omega_T} \F_{\rg, \delta} \psi \, dx dt 
= \frac 1{|Y|} \lim\limits_{\delta \to 0 } \int_{\Omega_T} \int_{Y} \mathcal T^\delta(\F_{\rg, \delta}) \mathcal T^\delta( \psi) \, dy dx dt \\
& = 
\frac 1{|Y_w|} \lim\limits_{\delta \to 0 } \int_{\Omega_T} \int_{Y_w} \mathcal T^\delta(\F_{\rg, \delta}) \mathcal T^\delta( \psi) \, dy dx dt
= \frac 1{|Y_w|}  \int_{\Omega_T} \int_{Y_w} \tilde \F_{\rg} \mathcal \psi \, dy dx dt = \int_{\Omega_T}  \tilde \F_{\rg} \mathcal \psi \,  dx dt,
\end{aligned} 
$$
for all $\psi \in L^2((0,T)\times \Omega)$. Hence $\tilde \F_\rg = \F_\rg$ in $(0,T)\times \Omega$. 
Similarly we obtain  
$$
\begin{aligned} 
&\big\|\F_{\rg, \delta} - \F_{\rg}\big\|^2_{L^2((0,T)\times \Omega)}\leq
|Y|^{-1}\big\| \mathcal T^\delta(\F_{\rg, \delta})-
\mathcal T^\delta (\F_\rg) \big\|^2_{L^2((0,T)\times \Omega \times Y)} + C |\Lambda_\delta|\\
&\leq  |Y_w|^{-1}
\big\| \mathcal T^\delta(\F_{\rg, \delta})-
\F_\rg \big\|^2_{L^2((0,T)\times \Omega \times Y_w)}
+ 
|Y|^{-1} \big\| \mathcal T^\delta(\F_{\rg})-
\F_\rg \big\|^2_{L^2((0,T)\times \Omega \times Y)} + C|\Lambda_\delta|
\to 0, \; \text{ as }   \delta \to 0, 
\end{aligned} 
$$
which ensures  the strong convergence of $\F_{\rg, \delta}$ to $\F_\rg$ in $L^2((0,T)\times \Omega)$.
Strong convergence of $\F_{\rg, \delta}$, together with the uniform boundedness of $\F_{\rg, \delta}$ and ${\rm det}(\F_{\rg, \delta})\geq 1$,  implies also strong convergence of  $\F_{\rg, \delta}^{-1}$ in $L^2((0,T)\times \Omega)$.  Then using the equivalence between two-scale convergence of a sequence and weak convergence of the corresponding unfolded sequence we obtain  $\int_{Y_w} {\rm sym}\big(\mathcal T^{\delta} (\nabla \bu^\delta \F^{-1}_{\rg, \delta}) ) dy \to \int_{Y_w} {\rm sym} ((\nabla \bu + \nabla_y \bu_1) \F_{\rg}^{-1}) dy$ strongly in $L^2((0,T)\times \Omega)$.

Since the elasticity tensor $\mathbb E$ is independent of the time variable,  estimate \eqref{estim_diff_elast_3} implies
\begin{equation} \label{estim_diff_elast_stress_1}
\begin{aligned}
\big\|\mathcal T^{\delta}\big(\mathbb E^\delta {\rm sym}(\nabla \bu^\delta \F^{-1}_{\rg, \delta})\big)(\cdot+\tilde \tau, \cdot, \cdot ) -
\mathcal T^{\delta} \big(\mathbb E^\delta {\rm sym}(\nabla \bu^\delta \F^{-1}_{\rg, \delta})\big)\big\|_{L^2((0,T-\tilde \tau)\times\Omega\times Y_w)}^2 \leq \tilde \kappa(\tilde \tau),
\end{aligned}
\end{equation}
 with $\tilde \kappa(\tilde \tau) \to 0$ as $\tilde \tau \to 0$. From \eqref{estim_diff_elast_4}, together with the regularity of $\mathbb E$ with respect to the first variable and $Y$-periodicity with respect to the second variable, we obtain
$$
\begin{aligned}
&\big \|\mathcal T^{\delta}(\mathbb E^\delta)(\cdot+\tilde h,\cdot){\rm sym}\big(\mathcal T^{\delta} (\nabla \bu^\delta \F^{-1}_{\rg, \delta})\big)(\cdot, \cdot+\tilde h,\cdot) -
\mathcal T^{\delta}(\mathbb E^\delta) {\rm sym}\big(\mathcal T^{\delta} (\nabla \bu^\delta \F^{-1}_{\rg, \delta})\big)\big\|_{L^2((0,T)\times\Omega_{3h}\times Y_w)}^2 \\
& \leq \big \|\mathbb E\big\|_{L^\infty(\Omega\times Y_w)}^2\, \kappa(h)
+ \sum\limits_{ k \in \{ 0, 1\}^d}\big\|\mathbb E(\cdot+\hat h, \cdot) - \mathbb E\big\|_{L^\infty(\Omega_{3h}\times Y_w)}^2 \big\|{\rm sym}\big( \nabla \bu^\delta \F^{-1}_{\rg, \delta}\big)\big\|_{L^2((0,T)\times\Omega^\delta)}^2 \leq \kappa_1(h),
\end{aligned}
$$
where $\hat h = \delta (k_b + [\tilde h/\delta]_Y)$ for  $\tilde h \in \mathbb R^d$ with $|\tilde h |< h$ and $\kappa_1(h)\to 0$ as $h \to 0$. Then using the two-scale convergence of $\nabla \bu^\delta$ to $\nabla \bu + \nabla_y \bu_1$, the  strong convergence of  $\F^{-1}_{\rg, \delta}$,   and the strong two-scale convergence of   $\mathbb E^\delta$ to $\mathbb E$, together with the   Fr\'echet-Kolmogorov compactness theorem,  yields the last convergence in \eqref{convergence_2}.
\end{proof}

\section{Derivation of macroscopic equations for microscopic model~\eqref{growth_micro_1} and~\eqref{micro_model_ref_2}} \label{section_deriv}

We shall use both the formal asymptotic expansion and  two-scale convergence methods to derive macroscopic equations for~\eqref{growth_micro_1},\eqref{micro_model_ref_2}. 

\subsection{Formal asymptotic expansion} First we present the formal derivation of the macroscopic equations using the asymptotic expansion of $\bu^\delta$ and $\F_{\rg, \delta}^{-1}$ in powers of $\delta$. The  convergence results  \eqref{convergence_1} ensure  that the limit functions (zero-order terms)  $\bu=\bu(t,x)$  and $\F_{\rg} = \F_{\rg}(t,x)$  are independent of the microscopic variable $y \in Y_w$. 
Hence the formal asymptotic expansion ansatz reads
 \begin{equation}\label{ansatz_1} 
 \begin{aligned}
 \bu^\delta(t,x) & = \bu(t,x) + \delta \bu_1(t,x,x/\delta) + \delta^2 \bu_2(t,x,x/\delta) + \ldots, \\
  \F^{-1}_{\rg, \delta}(t,x) & = \F^{-1}_{\rg}(t,x) + \delta \F^{-1}_{\rg, 1}(t,x,x/\delta) + \delta^2 \F^{-1}_{\rg, 2}(t,x,x/\delta) + \ldots, 
 \end{aligned}
 \end{equation}
 for $(t,x)\in (0,T)\times \Omega^\delta$  and  $\bu_j(t,x, \cdot)$ and $ \F^{-1}_{\rg, j}(t,x, \cdot)$ are $Y$-periodic, for $j=1,2, \ldots$.
 Substituting \eqref{ansatz_1} into microscopic equations~\eqref{micro_model_ref_2}  yields 
 \begin{equation}\label{expan_1}
\begin{aligned} 
 -({\rm div}_x + \frac 1 \delta {\rm div}_y)\Big( J_{\rg}^\delta \Big[
 \mathbb E(x,y)\,  \bsve^{\rm el}\big(   \nabla_x \bu + \nabla_y \bu_1 + \delta(\nabla_x \bu_1 + \nabla_y \bu_2) & \ldots , \F_{\rg}^{-1} + \delta \F_{\rg, 1}^{-1}\ldots \big) \\
 + P_1(t,x){\bf I} \Big]\big( \F_{\rg}^{-T} + \delta \F_{\rg, 1}^{-T} \ldots \big) 
  \Big)
 & =  - J_{\rg}^\delta\, \big(\F_{\rg}^{-T} + \delta \F_{\rg,1}^{-T}  \ldots\big) \nabla_x P_1(t,x)
  \end{aligned} 
\end{equation} 
in $(0,T)\times \Omega \times Y_w$, where $J_{\rg}^\delta = J_{\rg}^\delta \big( (\F_{\rg}^{-1} + \delta \F_{\rg, 1}^{-1}  \ldots)^{-1} \big)$.
%
For the boundary conditions 
on $(0,T)\times \Omega \times \Gamma$ we have 
 \begin{equation}\label{expan_boun1}
\begin{aligned} 
 & J_{\rg}^\delta \Big[ \bbE(x,y) \, \bsve^{\rm el}\big( \nabla_x \bu + \nabla_y \bu_1 + \delta(\nabla_x \bu_1 + \nabla_y \bu_2) \ldots, \F_{\rg}^{-1} + \delta \F_{\rg, 1}^{-1}\ldots \big)\\
 & \qquad \qquad + P_1(t,x) {\bf I} \Big]   \big(\F_{\rg}^{-T} + \delta \F_{\rg,1}^{-T} \ldots\big) N   =  - \delta J_{\rg}^\delta  P_2(t,x,y) \,  ( \F_{\rg}^{-T} + \delta \F_{\rg, 1}^{-T} \ldots)  N 
 \end{aligned} 
\end{equation}
and on  $(0,T)\times \Gamma_N\times Y_w$ we obtain
 \begin{equation}\label{expan_boun2}
\begin{aligned} 
  J_{\rg}^\delta \Big[\bbE(x,y) \, \bsve^{\rm el}\big(  \nabla_x \bu + \nabla_y \bu_1 + \delta(\nabla_x \bu_1 + \nabla_y \bu_2) \ldots, \F_{\rg}^{-1} + \delta \F_{\rg, 1}^{-1}\ldots \big) + P_1(t,x) {\bf I} \Big]  \big(\F_{\rg}^{-T} + \delta \F_{\rg,1}^{-T} \ldots\big) N
 \\ = J_{\rg}^\delta  \Big[  P_1(t,x)  \big(\F_{\rg}^{-T} + \delta \F_{\rg,1}^{-T} \ldots\big) N  +   \bbf(t,x) | ( \F_{\rg}^{-T} + \delta \F_{\rg, 1}^{-T} \ldots) N|  \Big]  .
 \end{aligned} 
\end{equation} 
For the growth equation it holds 
 \begin{equation}\label{growth_formal_11} 
 \partial_t ( \F_{\rg}^{-1} + \delta \F_{\rg, 1}^{-1}\ldots)^{-1} =  G^\delta(x, \nabla_x \bu + \nabla_y \bu_1 + \delta(\nabla_x \bu_1 + \nabla_y \bu_2)  \ldots, \F_{\rg}^{-1} + \delta \F_{\rg, 1}^{-1} \ldots) ( \F_{\rg}^{-1} + \delta \F_{\rg, 1}^{-1}\ldots)^{-1} . 
\end{equation}  

Now we shall consider terms for different powers of $\delta$.   
For $O(\delta^{-1})$  in equation \eqref{expan_1}  and   $O(1)$  in boundary condition \eqref{expan_boun1}   we obtain 
 \begin{equation} \label{delta-1_orderBVP}
 \begin{aligned} 
- {\rm div}_y\Big[  J_{\rg}
 \Big(\mathbb E(x,  y) \bsve^{\rm el}(  \nabla_x \bu + \nabla_y \bu_1, \F_{\rg}^{-1}) + P_1(t,x){\bf I} \Big)\F_{\rg}^{-T}  \Big] &= 0 \; \; && \text{ in } (0,T)\times \Omega\times Y_w,
\\
 J_{\rg} \Big(\bbE(x, y) \bsve^{\rm el}\big( \nabla_x \bu + \nabla_y \bu_1, \F_{\rg}^{-1}\big)   + P_1(t,x){\bf I} \Big)\F_{\rg}^{-T} N & = 0 &&  \text{ on } (0,T)\times \Omega\times \Gamma, \\
 \bu_1 & &&  \: Y-\text{periodic},
 \end{aligned} 
\end{equation}  
where $J_{\rg} = J_\rg(\F_{\rg})$. 
This is an elliptic  problem for $\bu_1$ in $Y_w$ for  given $\bu$ and $\F_{\rg}$. 

Terms of $O(1)$ in equations   \eqref{expan_1} are 
 \begin{equation}\label{eq_u2}
\begin{aligned} 
 &- {\rm div}_x \Big[  J_{\rg}  \Big(
 \mathbb E(x,y)\, \bsve^{\rm el}(\nabla_x \bu  +  \nabla_y  \bu_1, \F_{\rg}^{-1})  + P_1(t,x) {\bf I} \Big) \F_{\rg}^{-T} 
\Big]  
\\
& - {\rm div}_y \Big[ J_{\rg}^\prime \cdot \F_{\rg,1}^{-1} \, 
 \Big(\mathbb E(x,y)\,  \bsve^{\rm el}(\nabla_x \bu  +  \nabla_y  \bu_1, \F_{\rg}^{-1}) + P_1(t,x) {\bf I} \Big)  \F_{\rg}^{-T} \Big]
 \\
 & - {\rm div}_y \Big[  J_{\rg} 
 \Big(\mathbb E(x,y)\,  \bsve^{\rm el}(\nabla_x \bu  +  \nabla_y  \bu_1, \F_{\rg}^{-1}) + P_1(t,x) {\bf I} \Big) \F_{\rg,1}^{-T}  \Big]\\
 & - {\rm div}_y \Big[  J_{\rg} \, 
 \mathbb E(x,y)\,  \partial_2  \bsve^{\rm el}(\nabla_x \bu +  \nabla_y  \bu_1, \F_{\rg}^{-1}) \F_{\rg,1}^{-1}     \F_{\rg}^{-T} \Big]\\
 &  - {\rm div}_y \Big[  J_{\rg} \, 
 \mathbb E(x,y)\, \partial_1 \bsve^{\rm el}(\nabla_x \bu  +  \nabla_y  \bu_1, \F_{\rg}^{-1})  (\nabla_x \bu_1 + \nabla_y \bu_2)  \F_{\rg}^{-T} \Big]=  - J_{\rg}   \F_\rg^{-T} \nabla_x P_1(t,x) \quad \text{ in } \Omega \times Y_w
  \end{aligned} 
\end{equation} 
and $O(\delta)$-terms in boundary condition \eqref{expan_boun1}   are 
 \begin{equation}\label{bc_u2}
\begin{aligned} 
 J_{\rg}^\prime \cdot \F_{\rg,1}^{-1} \, \Big(\mathbb E(x,y)\, \bsve^{\rm el}( \nabla_x \bu +  \nabla_y \bu_1, \F_{\rg}^{-1})  + P_1(t,x) {\bf I} \Big) \F_{\rg}^{-T}  N \\
+ J_{\rg} \,  \Big(\mathbb E(x,y)\,  \bsve^{\rm el}(\nabla_x \bu +  \nabla_y \bu_1, \F_{\rg}^{-1}) + P_1(t,x) {\bf I} \Big) \F_{\rg,1}^{-T}\, N \\ 
+ J_{\rg} \,  \mathbb E(x,y)\, \partial_1  \bsve^{\rm el}(\nabla_x \bu +  \nabla_y \bu_1, \F_{\rg}^{-1})  (\nabla_x \bu_1 + \nabla_y \bu_2)   \F_{\rg}^{-T}   N \\
+ J_{\rg} \,  \mathbb E(x,y)\,\partial_2  \bsve^{\rm el}(\nabla_x \bu +  \nabla_y \bu_1, \F_{\rg}^{-1}) \F_{\rg, 1}^{-1} \F_{\rg}^{-T}    N  & = - J_{\rg}\,  P_2(t,x, y)  \F_{\rg}^{-T}  N \quad \text{ on } \Omega\times \Gamma,
 \end{aligned} 
\end{equation} 
where $ J_{\rg}^\prime \cdot \F_{\rg,1}^{-1} $ denotes the sum of the components of  the  Hadamard product of two matrices $J_{\rg}^\prime$ and $\F_{\rg,1}^{-1}$.

Equations \eqref{eq_u2}, with boundary condition \eqref{bc_u2} and $Y$-periodicity of $\bu_2$, define an elliptic problem for $\bu_2$ in $Y_w$ for given $\bu$, $\bu_1$, $\F^{-1}_{\rg}$, and $\F^{-1}_{\rg,1}$. Applying the Fredholm alternative to ensure existence of a solution $\bu_2$ of \eqref{eq_u2} and \eqref{bc_u2}, see e.g.~\cite{Evans}, and using $Y$-periodicity of $\mathbb E$, $ \F_{\rg,1}^{-1}$, and  $\bu_j$, for $j=1,2$,   imply macroscopic equations for $\bu$:
 \begin{equation}\label{eq_u0_2}
\begin{aligned} 
 -{\rm div}_x\int_{Y_w} J_{\rg} \, \,\Big(
 \mathbb E(x, y) \bsve^{\rm el}(\nabla_x \bu  +  \nabla_y  \bu_1, \F_{\rg}^{-1}) + P_1(t,x) {\bf I} \Big) \, \F_{\rg}^{-T}
   dy   \\ =  - |Y_w| J_{\rg} \, \F_{\rg}^{-T} \nabla_x P_1(t,x)
   -  \int_\Gamma J_{\rg} \,  P_2(t, x, y)  \F_{\rg}^{-T} N d\gamma_y & \qquad \text{ in } \; (0,T) \times \Omega.
  \end{aligned} 
\end{equation} 
As next we need to determine $\bu_1$.  Using the expression for $\bsve^{\rm el}$
and transformation of  problem  \eqref{delta-1_orderBVP} to be defined on  $Y_{w,\rg} = \F_\rg Y_w$ and 
$\Gamma_\rg =   \F_\rg  \Gamma$ for~$(t,x)\in (0,T)\times \Omega$,  
we  obtain 
\begin{equation} 
 \begin{aligned} 
 {\rm div}_y  
\Big( \mathbb E(x, \F_\rg^{-1} y) \big[  {\rm sym} (\nabla_y \bu_1) + {\rm sym}(\nabla_x \bu  \F_\rg^{-1})  + {\rm sym}( \F_\rg^{-1})  - {\bf I}\big] 
+ P_1(t,x) {\bf I}\Big) &= 0  && \text{ in } Y_{w,\rg}, \\
  \Big( \bbE(x,  \F_\rg^{-1}y)   \big[{\rm sym}(\nabla_y \bu_1)  + {\rm sym}(\nabla_x \bu \F_\rg^{-1} )+ {\rm sym}( \F_\rg^{-1}) -{\bf I}\big]  + P_1(t,x)  {\bf I} \Big) \nu &= 0   && \text{ on }  \Gamma_\rg, \\
  \bu_1 & && Y_\rg - \text{ periodic}.
 \end{aligned} 
\end{equation} 
This is a linear elliptic problem for $\bu_1$ which due to assumptions on $\mathbb E$ has, up to constant in $y$,  a unique solution.  
Thus we can consider $\bu_1$ in the form 
\begin{equation}\label{ansatz_u1}
\bu_1(t, x,y) = \sum_{i,j=1}^d \Big[ (\nabla_x \bu  \F_\rg^{-1})_{ij}(t,x)  +  (\F^{-1}_{\rg}(t,x)- {\bf I})_{ij} \Big] {\bf w}^{ij}(t,x,y) + P_1(t, x) {\bf v}(t,x,y) + \bar \bu_1(t,x),
\end{equation}
where  ${\bf w}^{ij}$ and ${\bf v}$ are solutions of the `unit cell' problems
\begin{equation} \label{unit_cell_11}
 \begin{aligned} 
 {\rm div}_y\big(  
 \mathbb E(x, \F_\rg^{-1} y) \big[ {\rm sym}(\nabla_y {\bf w}^{ij})  +  {\bf b}_{ij}\big] \big) & = 0 
 \; \; && \text{ in }  Y_{w,\rg}, \\
  \mathbb E(x, \F_\rg^{-1} y) \big[ {\rm sym}(\nabla_y {\bf w}^{ij})  +  {\bf b}_{ij} \big] \nu & = 0 
&&  \text{ on } \; \; \Gamma_{\rg}, \quad 
   {\bf w}^{ij}  \quad    Y_{\rg}-\text{ periodic},  
 \end{aligned} 
\end{equation} 
for $i,j=1, \ldots, d$, 
 where ${\bf b}_{ij} = \frac 12  ({\bf e}_i \otimes {\bf e}_j + {\bf e}_j \otimes {\bf e}_i)$ and $\{ {\bf e}_j\}_{j=1}^d$  is the standard basis in $\mathbb R^d$, 
 and 
 \begin{equation} \label{cell_prob_pressure} 
 \begin{aligned} 
 {\rm div}_y\Big( 
 \mathbb  E(x,\F_\rg^{-1} y)\, 
 {\rm sym}(\nabla_y {\bf v}) +  {\bf I} \Big) & = 0 && \text{ in } Y_{w, \rg}, \\
  \big(  \mathbb E(x, \F_\rg^{-1} y)\, 
  {\rm sym}(\nabla_y {\bf v} ) +  {\bf I} \big) \nu & = 0 && \text{ on } \Gamma_{\rg} , \quad 
  {\bf v}  \quad   Y_{\rg}-\text{ periodic}.  
\end{aligned} 
\end{equation} 
Transforming back to the fixed domain  $Y_w$ gives
\begin{equation} \label{unit_cell_12}
 \begin{aligned} 
 {\rm div}_y\Big(  J_\rg
 \mathbb E(x,y) \big({\rm sym}(\nabla_y {\bf w}^{ij} \F_\rg^{-1}) +  {\bf b}_{ij}\big) \F_\rg^{-T} \Big) & = 0 
 \; \; && \text{ in } \; \;  Y_{w}, \\
 J_\rg \mathbb E(x,y) \big( {\rm sym}(\nabla_y {\bf w}^{ij}  \F_\rg^{-1}) +  {\bf b}_{ij} \big) \F_\rg^{-T} N & = 0 \; \; 
 && \text{ on } \; \; \Gamma, \quad {\bf w}^{ij} \; \; \; Y -\text{periodic},
 \end{aligned} 
\end{equation}  
and 
\begin{equation} \label{unit_cell_22}
 \begin{aligned} 
 {\rm div}_y\Big(  J_\rg
 \big(\mathbb E(x,y) {\rm sym}(\nabla_y {\bf v}\F_\rg^{-1}) + {\bf I} \big) \F_\rg^{-T}\Big)& = 0
 \; \; && \text{ in } \; \;  Y_{w}, \\
 J_\rg \big( \mathbb E(x,y)  {\rm sym}(\nabla_y {\bf v}  \F_\rg^{-1}) +  {\bf I} \big) \F_\rg^{-T} N & = 0 \; \; 
 && \text{ on } \; \; \Gamma, \quad {\bf v} \; \; \; Y -\text{periodic}.
 \end{aligned} 
\end{equation}  
Using the solutions of the `unit cells' problems, the macroscopic (homogenized) elasticity tensor $\mathbb E_{\rm hom}$ is defined as
\begin{equation}\label{maacro_E}
\begin{aligned} 
\mathbb E_{{\rm hom}, {ijkl}} (t, x) & =  \frac 1 { |Y_\rg|} \int_{Y_{w, \rg}}  \Big(\mathbb E_{ijkl}(x,\F_\rg^{-1} y) + \Big[\mathbb E(x,\F_\rg^{-1} y)\,  {\rm sym}(\nabla_y{\bf w}^{ij})\Big]_{kl}  \Big) dy
\\
&=  \frac 1 { |Y|}\int_{Y_{w}}  \Big(\mathbb E_{ijkl}(x, y) + \Big[\mathbb E(x,y) \, {\rm sym}(\nabla_y {\bf w}^{ij} \F_\rg^{-1}) \Big]_{kl}  \Big) dy \quad \text{ for } \;  (t,x) \in (0,T)\times \Omega,
\end{aligned} 
\end{equation}
 and matrix $K_{\rm hom}$ is given by 
\begin{equation}\label{maacro_K}
 K_{\rm hom}(t, x) = \frac 1 { |Y_\rg|} \int_{Y_{w, \rg}}
 \mathbb E(x,\F_\rg^{-1} y) \, {\rm{sym}}(\nabla_y {\bf v}) \, dy = \frac 1 { |Y|}\int_{Y_{w}}
 \mathbb E(x,y)\,  {\rm{sym}}(\nabla_y {\bf v} \F_\rg^{-1}) \, dy
\quad \text{ in  }  (0,T)\times \Omega. 
\end{equation}

Using the structure of $\bu_1$ in \eqref{eq_u0_2} and in $O(1)$-terms in \eqref{expan_boun2}, together with the expressions for $\mathbb E_{\rm hom}$ and $K_{\rm hom}$, yields the macroscopic problem  \eqref{macro_main} formulated below.  Considering $O(1)$-terms in \eqref{growth_formal_11} and using again the structure of $\bu_1$ imply macroscopic equations \eqref{macro_growth_eq} for the growth tensor $\F_\rg$.

\subsection{Rigorous derivation of macroscopic model}  Now, using the two-scale convergence  method, see e.g.~\cite{Allaire92, CD, Nguetseng89},  and  the properties of the periodic unfolding operator, see e.g.~\cite{CDG_book},  we rigorously derive the macroscopic model for the microscopic problem~\eqref{growth_micro_1} and~\eqref{micro_model_ref_2}.  

\begin{theorem} 
Under assumptions in Lemma~\ref{lem:conv_stong}, up to a subsequence, solutions  $\{ \bu^\delta\}$ and $\{\F_{g, \delta}\}$  of the microscopic problem  \eqref{growth_micro_1} and~\eqref{micro_model_ref_2} converge, as $\delta \to 0$, to solution $\bu \in L^\infty(0,T; V)$ and $\F_\rg \in W^{1,\infty}(0,T; L^q(\Omega))\cap L^\infty((0,T)\times \Omega)$, for any $q \in (1, \infty)$, of the macroscopic problem
\begin{equation}\label{macro_main}
\begin{aligned}
{\rm div}_x 
\Big(J_\rg \, {\boldsymbol{\sigma}_{\rm hom}}\, \F_{\rg}^{-T}   \Big)
& =    J_\rg   \F_{\rg}^{-T}  \frac 1{ |Y|} \int_{\Gamma}   P_2(t,x, y) N  d\gamma_y     \; && \;\text{ in } \, (0,T)\times \Omega, \\
 J_\rg\,  \boldsymbol{\sigma}_{\rm hom} \, \F_{\rg}^{-T} N  
   & =  \Big(1- \frac{ |Y_w|}{|Y|} \Big) J_\rg P_1(t,x) \F_{\rg}^{-T} N +   J_\rg \,   | \F_{\rg}^{-T} N|\,\bbf(t,x)  \;   && \; \text{ on } \,(0,T)\times \Gamma_N, \\
\bu \cdot N =0, \; \; J_\rg \Pi_\tau\big(\boldsymbol{\sigma}_{\rm hom} \F_{\rg}^{-T}  N\big)& = \Big(1- \frac{ |Y_w|}{|Y|} \Big) J_\rg P_1(t,x) \Pi_\tau(\F_{\rg}^{-T} N), \; \;  \text{ or } \; \;    \bu = 0 && \; \text{ on } \,(0,T)\times \Gamma_D,  
\end{aligned} 
\end{equation} 
and 
\begin{equation}\label{macro_growth_eq}
\begin{aligned}
\partial_t \F_\rg & = G(x,  \nabla \bu, \F_{\rg}^{-1} ) \, \F_\rg  \qquad && \text{ in } (0,T)\times \Omega,  \\
\F_{\rg}(0) & = {\bf I} && \text{ in } \Omega, 
\end{aligned}
\end{equation}
with 
 \begin{eqnarray}
\tilde G(x,  \nabla \bu, \F_\rg^{-1}) &=& 
 \eta_\sigma[(|Y|/|Y_w|) {\boldsymbol{\sigma}}_{\rm hom} (t,x,\nabla \bu, \F_\rg^{-1})- {\boldsymbol\tau}_\sigma ]_+   \label{G_stress} \qquad \text{ or }  \\
 \tilde G(x,  \nabla \bu, \F_\rg^{-1}) &=&
 \eta_\ve[(|Y|/|Y_w|) {\boldsymbol{\ve}}_{\rm hom} (t,x,\nabla \bu, \F_\rg^{-1}) - {\boldsymbol\tau}_\ve ]_+ , \label{G_strain}
\end{eqnarray}
where 
\begin{eqnarray}
{\boldsymbol{\sigma}_{\rm hom}}(t,x, \nabla \bu, \F_{\rg}^{-1})& =& \mathbb E_{\rm hom}(t,x)  \, {\boldsymbol \ve}^{\rm el}(\nabla \bu, \F_\rg^{-1} ) +  K_{\rm hom}(t,x) P_1(t,x)  \label{sigma_hom}
\\
{\boldsymbol{\ve}_{\rm hom}}(t,x,\nabla \bu, \F_\rg^{-1})
 &= & \frac{|Y_w|}{|Y|} {\boldsymbol \ve}^{\rm el}(\nabla \bu, \F_\rg^{-1} )
+ \sum_{i,j=1}^d  {\boldsymbol \ve}^{\rm el}(\nabla \bu, \F_\rg^{-1} )_{ij}  \frac 1 { |Y|}\int_{Y_{w}}  {\rm sym}\big(\nabla_y {\bf w}^{ij} \F_\rg^{-1}\big) dy \nonumber
\\ &&  +  P_1(t,x) \frac 1 { |Y|}\int_{Y_{w}} {\rm sym}( \nabla_y {\bf v}\F_\rg^{-1} ) dy , \label{strain_hom}
\end{eqnarray}
with ${\boldsymbol \ve}^{\rm el} (\nabla \bu, \F_\rg^{-1} )= {\rm{sym}}(\nabla \bu  \, \F_\rg^{-1}) +  {\rm{sym}}(\F_\rg^{-1} ) -   {\bf I}$, and $G$ is defined by $\tilde G$ as in \eqref{def_G_11}.
The 
macroscopic  tensors $\mathbb E_{\rm hom}$ and $K_{\rm hom}$ are defined by
\eqref{maacro_E} and \eqref{maacro_K}, where  
${\bf w}^{ij}$ and ${\bf v}$ are solutions of the `unit cell' problems \eqref{unit_cell_12} and \eqref{unit_cell_22}.
 The space $V$ is defined in the same way as $V_\delta$ with $\Omega^\delta$ replaced by $\Omega$.

 Assuming, additionally, 
\begin{equation} \label{bound_macro}
\|\nabla \bu \|_{L^\infty((0,T)\times \Omega)} \leq C, 
\end{equation}
solution of problem \eqref{macro_main}, \eqref{macro_growth_eq} is unique and the whole sequence of solutions of microscopic model  \eqref{growth_micro_1} and~\eqref{micro_model_ref_2} converges to the solution of the macroscopic problem.  
\end{theorem}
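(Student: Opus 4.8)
The plan is to pass to the two-scale limit, $\delta\to0$, in the weak formulation \eqref{var_inequal} and in the growth equation \eqref{growth_micro_1}, using the convergences collected in Lemma~\ref{lem:conv_stong}, and then to split the resulting two-scale identity into the macroscopic system \eqref{macro_main}, \eqref{macro_growth_eq} and the `unit cell' problems \eqref{unit_cell_12}, \eqref{unit_cell_22}. First, the a priori bounds \eqref{estim_apriori_1} together with Lemma~\ref{lem:conv_stong} provide, up to a subsequence, the weak-$\ast$ limit $\bu$ of $\bu^\delta$ in $L^\infty(0,T;V)$, the two-scale limit $\nabla\bu+\nabla_y\bu_1$ of $\nabla\bu^\delta$, the weak-$\ast$ limit $\F_\rg$ of $\F_{\rg,\delta}$ in $W^{1,\infty}(0,T;L^q(\Omega))\cap L^\infty$ (the closed conditions $\det\F_\rg\ge1$, $\lambda_j(\F_\rg)\ge1$ passing to the limit), and, thanks to assumption \eqref{bound_bu}, the strong $L^2((0,T)\times\Omega)$ convergence of $\F_{\rg,\delta}$ and $\F_{\rg,\delta}^{-1}$ together with the strong convergences of the unfolded strain and stress cell-averages in \eqref{convergence_2}. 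The constraints defining $V$ pass to the limit by weak convergence.

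Next I would test \eqref{var_inequal} with $\varphi^\delta(t,x)=\varphi(t,x)+\delta\,\psi(t,x,x/\delta)$, where $\varphi\in C^\infty([0,T]\times\overline\Omega)^d$ respects the constraints of $V$ and $\psi\in C^\infty_0((0,T)\times\Omega;C^\infty_{\rm per}(Y))^d$, apply the unfolding operator $\mathcal T^\delta$ to every volume and surface term, and pass to the limit using the weak two-scale convergence of $\mathcal T^\delta(\nabla\bu^\delta)$, the strong convergence of $\mathcal T^\delta(\F_{\rg,\delta}^{-1})$ and $\mathcal T^\delta(J_\rg^\delta)$, the strong two-scale convergence of $\mathbb E^\delta$, and the unfolding of the integrals over $\Gamma^\delta$ and $\partial\Omega$ (the trace/unfolding estimates are those already employed in the proof of Lemma~\ref{lem:conv_stong}); the $\delta$-scaled term $\delta\langle J_\rg^\delta P_2^\delta\F_{\rg,\delta}^{-T}N,\varphi^\delta\rangle_{\Gamma^\delta}$ then becomes $\tfrac1{|Y|}\int_\Omega\int_\Gamma J_\rg P_2(t,x,y)(\F_\rg^{-T}N)\cdot\varphi\,d\gamma_y\,dx$, while the integrals on $\partial\Omega$ keep their macroscopic form. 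Setting $\varphi=0$ isolates the weak form of the $y$-cell problem \eqref{delta-1_orderBVP}, which by ellipticity and the symmetries of $\mathbb E$ has, up to an additive function of $(t,x)$, the unique solution given by the representation \eqref{ansatz_u1} with correctors ${\bf w}^{ij},{\bf v}$ solving \eqref{unit_cell_12}, \eqref{unit_cell_22}; substituting \eqref{ansatz_u1} into the limit identity, setting $\psi=0$, and computing the $Y_w$-integrals recovers \eqref{macro_main} with $\boldsymbol{\sigma}_{\rm hom}$ as in \eqref{sigma_hom} and the homogenised tensors $\mathbb E_{\rm hom}$, $K_{\rm hom}$ of \eqref{maacro_E}, \eqref{maacro_K}.

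For the growth tensor I would pass to the limit in the unfolded equation \eqref{unfold_growth}: the cell-average $\ddashinttt_{Y_w}\mathcal T^\delta(\boldsymbol\sigma)\,dy$ (resp. of $\boldsymbol\varepsilon^{\rm el}$) converges strongly in $L^2((0,T)\times\Omega)$ by \eqref{convergence_2}, and substituting \eqref{ansatz_u1} identifies its limit with $(|Y|/|Y_w|)\boldsymbol{\sigma}_{\rm hom}$ (resp. $(|Y|/|Y_w|)\boldsymbol\varepsilon_{\rm hom}$), matching \eqref{G_stress}, \eqref{G_strain}, \eqref{strain_hom}. Since the eigenvalue positive-part map $[\,\cdot\,]_+$ and the truncation in \eqref{def_G} are Lipschitz, strong convergence is preserved; combining this with $\F_{\rg,\delta}\to\F_\rg$ strongly and the uniform bound on $\partial_t\F_{\rg,\delta}$, integrating the ODE in time and using dominated convergence shows that $\F_\rg$ satisfies \eqref{macro_growth_eq}. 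For uniqueness under the additional bound \eqref{bound_macro}: with $\nabla\bu\in L^\infty((0,T)\times\Omega)$ the map $\tilde\F_\rg\mapsto\F_\rg$ defined through \eqref{macro_main}, \eqref{macro_growth_eq} can be analysed exactly as the map $\mathcal K$ in the proof of Theorem~\ref{th:exist}, now using the $\delta$-free generalised Korn inequality on $\Omega$ (the $\delta\to0$ analogue of Lemma~\ref{Korn_gener}, valid since $\lambda_j(\F_\rg)\ge1$) in place of \eqref{estim:Korn_gen}; it is a contraction on a short time interval, giving uniqueness locally and then globally by iteration, and uniqueness of the limit forces the whole sequence $\{\bu^\delta,\F_{\rg,\delta}\}$ to converge.

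The main obstacle is the limit passage in the growth law: because $G^\delta$ is evaluated at a cell average of a product of the only weakly (two-scale) convergent $\nabla\bu^\delta$ with the strongly convergent $\F_{\rg,\delta}^{-1}$, and because $\F_{\rg,\delta}$ feeds back into the elasticity equation, the limit cannot be closed without the strong convergence of the unfolded strain in \eqref{convergence_2} (which rests on \eqref{bound_bu}); exploiting that strong convergence, together with checking that the nonsmooth operations $[\,\cdot\,]_+$ and the truncation in \eqref{def_G} commute with strong limits, and propagating the $\delta$-uniform Korn estimate to the homogenised problem, is the crux of the argument.
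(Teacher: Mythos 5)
Your proposal is correct and follows essentially the same route as the paper: oscillating test functions $\psi+\delta\psi_1$ in \eqref{var_inequal}, two-scale limit passage using the strong convergences of Lemma~\ref{lem:conv_stong}, separation into the cell problems \eqref{unit_cell_12}--\eqref{unit_cell_22} and the macroscopic system via the corrector ansatz \eqref{ansatz_u1}, and the limit in the growth law via the strong convergence of the unfolded cell averages in \eqref{convergence_2}. The only cosmetic difference is that you phrase uniqueness under \eqref{bound_macro} as a contraction argument modelled on Theorem~\ref{th:exist}, whereas the paper writes the equivalent difference estimate \eqref{uniq_estim} plus Gr\"onwall directly, both resting on the same macroscopic Korn inequality \eqref{Korn_macro}.
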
 

\begin{proof} 
Considering $\varphi(t,x) = \psi(t,x) + \delta \psi_1(t,x,x/\delta)$, with $\psi \in C^1([0,T]; C^1(\Omega)\cap V)$ and 
$\psi_1 \in C^1_0((0,T)\times \Omega; C^1_{\rm per} (Y))$, 
as a test function in \eqref{var_inequal}, taking the two-scale limit as $\delta \to 0$, and using the two-scale convergence of $\nabla \bu^\delta$ and the strong convergence of $\F_{\rg, \delta}^{-1}$, ensured by the boundedness of $\F_{\rg, \delta}^{-1}$ and the strong convergence of $\F_{\rg, \delta}$, see Lemma~\ref{lem:conv_stong},  we obtain 
\begin{equation} \label{limit_twoscale_1}
 \begin{aligned} 
 &  \int_{\Omega_T}\frac 1{|Y|} \int_{Y_w}  J_{\rg}\,  \Big[\mathbb E(x,y) \big({\rm sym}(\nabla \bu \, \F_{\rg}^{-1} + \nabla_y \bu_1\F_{\rg}^{-1}) + {\rm sym}(\F_{\rg}^{-1}) - {\bf I}\big) \\
& \qquad \qquad   + P_1(t,x) {\bf I}\Big]  \, (\nabla \psi + \nabla_y \psi_1)\, \F_{\rg}^{-1}\, dy dxdt
  = -
  \int_{\Omega_T} \frac 1 { |Y|} \int_{Y_w} J_{\rg}  \F_{\rg}^{-T} \nabla P_1(t,x)  \psi \, dy dx dt
  \\
& \quad   +
  \int_{(\partial \Omega)_T}  J_{\rg}  P_1(t,x) \F_{\rg}^{-T} N  \psi\,  d\gamma dt + \int_{\Gamma_{N, T}}  J_{\rg}  {\bf f}(t,x) |\F_{\rg}^{-T} N|   \psi\,  d\gamma dt
  \\
& \quad     - \int_{\Omega_T} \frac 1{|Y|} \int_{\Gamma} J_{\rg} \, P_2(t,x, y) \F_{\rg}^{-T} N \,\psi \,  d\gamma_y dx dt,
 \end{aligned}
\end{equation}
where $\Omega_T = (0,T)\times \Omega$, $(\partial \Omega)_T = (0,T)\times \partial \Omega$, and $\Gamma_{N,T} = (0,T)\times \Gamma_N$. 
    Considering now $\psi =0$ implies the problem for~$\bu_1$:
\begin{equation} \label{prob_u1_two-scale}
\begin{aligned}
\int_{\Omega_T} \frac 1{|Y|} \int_{Y_w} J_{\rg}\,\Big( \mathbb E(x, y) \big[  {\rm sym} (\nabla_y \bu_1 \,\F_\rg^{-1}) + {\rm{sym}}(\nabla \bu  \,\F_\rg^{-1})  + {\rm{sym}}( \F_\rg^{-1})  - {\bf I}\big] &
\\
+ P_1(t,x) {\bf I}\Big)    \nabla_y \psi_1 \F_{\rg}^{-1} \, & dydx dt  = 0.
\end{aligned}
\end{equation}
From \eqref{prob_u1_two-scale}, considering for $\bu_1$ the ansatz \eqref{ansatz_u1} we obtain the `unit cell' problems \eqref{unit_cell_11} and \eqref{cell_prob_pressure}  and the formulas for the macroscopic coefficient $\mathbb E_{\rm hom}$ and $K_{\rm hom}$ in \eqref{maacro_E} and \eqref{maacro_K}, respectively. 

Consider $\psi_1 = 0$  in \eqref{limit_twoscale_1} yields
\begin{equation}\label{prob_u_two-scale} 
 \begin{aligned} 
  \int_{\Omega_T}  J_{\rg}\, & \Big[ \frac 1{|Y|} \int_{Y_w}\mathbb E(x,y) \big({\rm sym}(\nabla \bu \,\F_{\rg}^{-1} + \nabla_y \bu_1\F_{\rg}^{-1}) + {\rm sym}(\F_{\rg}^{-1}) - {\bf I}\big) dy + \frac{ |Y_w|}{|Y|} P_1(t,x) {\bf I}\Big]   \nabla \psi \F_{\rg}^{-1} dxdt \\
  = & -
  \int_{\Omega_T} \frac {|Y_w|} { |Y|} J_{\rg} \F_{\rg}^{-T}  \nabla P_1(t,x) \psi\, dxdt +
  \int_{(\partial \Omega)_T} J_{\rg}  P_1(t,x)  \F_{\rg}^{-T} N  \psi \, d\gamma dt \\
 &  + \int_{\Gamma_{N,T}} J_{\rg} {\bf f}(t,x) |\F_{\rg}^{-T} N|  \psi \, d\gamma dt
  - \int_{\Omega_T}  J_{\rg} \F_{\rg}^{-T} \frac 1{|Y|} \int_{\Gamma}  P_2(t,x, y) N  \,  d\gamma_y \,\psi\,  dxdt.
 \end{aligned}
\end{equation}
Using the structure of $\bu_1$ and expressions for $\mathbb E_{\rm hom}$ and $K_{\rm hom}$, we obtain 
\begin{equation}\label{macro_weak_222}
\begin{aligned}
  \int_{\Omega_T}  J_\rg \, \Big[ \mathbb E_{\rm hom} (t, x)  \big[{\rm{sym}}(\nabla \bu \,\F_\rg^{-1} ) + {\rm{sym}} \big(\F_\rg^{-1}\big) -  {\bf I} \big]  + 
\Big(K_{\rm hom}(t,x) + \frac{|Y_w|}{|Y|} {\bf I} - {\bf I}\Big) P_1(t,x) \Big]\, \nabla \psi \, \F_{\rg}^{-1}  \, dxdt
\\
 =  \int_{\Omega_T}  J_{\rg}  \Big( 1 - \frac{ |Y_w|}{|Y|} \Big) \F_{\rg}^{-T}  \nabla P_1(t,x) \, \psi \,  dx dt
 + \int_{\Gamma_{N,T}} J_\rg \,  | \F_{\rg}^{-T} N|\,\bbf(t,x)\,    \psi \, d\gamma dt
\\
-   \int_{\Omega_T}  J_\rg  \, \F_{\rg}^{-T}  \frac 1{ |Y|} \int_{\Gamma}   P_2(t,x, y) N d\gamma_y \,  \psi \, dx dt,
\end{aligned}
\end{equation}
which is the macroscopic  problem \eqref{macro_main} for $\bu$ in the weak form. 
Here we used   ${\rm div} (J_{\rg} \F_{\rg}^{-T})  = 0$ in $(0,T)\times\Omega$ to rewrite the boundary term involving $P_1$ as an integral over $\Omega$ of $\nabla P_1$. Using the same calculations, the first integral on the right hand side  in \eqref{macro_weak_222}, combined with the last two terms on the left hand side,  can be rewritten as the integral over $(0,T)\times \partial \Omega$.

To pass to the limit in the equation \eqref{growth_micro_1}, we first determine the weak limit of $\int_{\delta ( [x/\delta]_Y + Y_w)} {\boldsymbol\sigma}(\tilde x, \nabla\bu^\delta, \F^{-1}_{\rg, \delta} )  d \tilde x$ and  $\int_{\delta ( [x/\delta]_Y + Y_w)} {\boldsymbol\ve}^{\rm el}(\nabla\bu^\delta, \F^{-1}_{\rg, \delta} )  d \tilde x$.  Using the two-scale convergence of $\nabla \bu^\delta$ and strong convergence of $\F_{\rm g, \delta}^{-1}$, together with the relation between the two-scale convergence of a sequence and weak convergence of the unfolded sequence, yields
$$
\begin{aligned} 
 \lim\limits_{\delta \to 0} \int_0^T\int_{\Omega}\frac 1 { |Y_w|}\int_{\delta ( [x/\delta]_Y + Y_w)} {\boldsymbol\sigma}(\tilde x, \nabla\bu^\delta, \F^{-1}_{\rg, \delta} )  d \tilde x \, \phi \,dx dt  =
\lim\limits_{\delta \to 0}\int_0^T\int_{\Omega}\frac 1 { |Y_w|} \int_{Y_w}\mathcal T^\delta \big({\boldsymbol\sigma}(x, \nabla\bu^\delta, \F^{-1}_{\rg, \delta} ) \big) dy \,\phi\, dx dt \\
 = 
\int_0^T\int_{\Omega}\frac 1 { |Y_w|} \int_{Y_w} \mathbb E(x,y) \big[{\rm sym}(\nabla \bu \, \F_\rg^{-1}) + {\rm sym} (\nabla_y \bu_1 \F_\rg^{-1})  +  {\rm sym} (\F_\rg^{-1}) -  {\bf I} \big]  dy\, \phi\, dx dt \\
 = 
 \int_0^T\int_{\Omega} \frac{ |Y|}{|Y_w|} {\boldsymbol \sigma}_{\rm hom}(t,x, \nabla \bu, \F_\rg^{-1}) \, \phi \, dx dt,
\end{aligned} 
$$
for  $\phi \in C_0((0,T)\times \Omega)$ and ${\boldsymbol \sigma}_{\rm hom}(t, x, \nabla \bu, \F_\rg^{-1})$ given by \eqref{sigma_hom}. 
Similarly we obtain the weak convergence for the strain 
$$
 \lim\limits_{\delta \to 0} \int_0^T\int_{\Omega}\frac 1 { |Y_w|}\int_{\delta ( [x/\delta]_Y + Y_w)} {\boldsymbol\ve}^{\rm el}(\nabla\bu^\delta, \F^{-1}_{\rg, \delta} )  d \tilde x \,  \phi \, dx dt = \int_0^T\int_{\Omega} \frac{ |Y|}{|Y_w|}  {\boldsymbol\ve}_{\rm hom} (t,x,\nabla \bu, \F_{\rg}^{-1})\, \phi \, dx dt,
 $$
where  the macroscopic strain ${\boldsymbol\ve}_{\rm hom} (t,x,\nabla \bu, \F_{\rg}^{-1})$ is defined by \eqref{strain_hom}.
Then 
 the continuity of $G$ and the strong convergence of $\F_{\rg, \delta}$, $\int_{Y_w} \mathcal T^\delta({\rm sym}(\nabla \bu^\delta \F_{\rg, \delta}^{-1} )) dy$, and   $\int_{Y_w} \mathcal T^\delta(\mathbb E^\delta(x)  {\rm sym}(\nabla \bu^\delta \F_{\rg, \delta}^{-1} )) dy$   yield macroscopic equation~\eqref{macro_growth_eq}.

Considering a piece-wise constant approximation of $\F_g$ by $\F_{\rg, j}= {\rm const}$ on $\Omega_j$, with $\Omega = {\rm Int}(\cup_{j=1}^n \overline \Omega_j)$, $\Omega_i \cap \Omega_j = \emptyset$, for $i\neq j$,   $J_{\rg, j} ={\rm det} (\F_{\rg, j}) \geq 1$ and eigenvalues $\lambda_k (\F_{\rg, j}) \geq 1$, for $k=1, \ldots, d$ and $j=1, \ldots, n$, and using the Korn inequality for $\bu(t) \in V$,  we can write
\begin{equation} \label{Korn_macro}
\begin{aligned} 
\int_\Omega |{\rm sym}(\nabla \bu \F_{\rg}^{-1})|^2 dx & =
\lim\limits_{n\to \infty} \sum_{j=1}^n \int_{\Omega_j}  |{\rm sym}(\nabla \bu \F_{\rg,j}^{-1})|^2 dx 
= \lim\limits_{n\to \infty} \sum_{j=1}^n \int_{\F_{\rg,j}\Omega_j}  J_{\rg,j}^{-1} |{\rm sym}(\nabla \bu )|^2 dx \\
&\geq C
 \lim\limits_{n\to \infty} \sum_{j=1}^n \int_{\Omega_j}  |{\rm sym}(\nabla \bu )|^2 dx = C \int_\Omega  |{\rm sym}(\nabla \bu )|^2 dx  \geq C \| \bu \|^2_{V}. 
\end{aligned} 
\end{equation}

Then using \eqref{bound_macro}, in a similar way as in the proof of Lemma~\ref{lem:conv_stong}, for two solutions  $\bu_1, \F_{\rg,1}$ and $\bu_2, \F_{\rg,2}$ of~\eqref{macro_main},\eqref{macro_growth_eq}  we  obtain  
\begin{equation}\label{uniq_estim} 
\|{\rm sym}(\nabla \bu_1 \F_{\rg,1}^{-1}) - {\rm sym}(\nabla \bu_2 \F_{\rg,2}^{-1}) \|^2_{L^2(\Omega)} \leq  C\|\F_{\rg,1}- \F_{\rg,2}\|^2_{L^2(\Omega)}, 
\end{equation} 
for a.a.~$t \in (0,T)$. 
Considering then the equation for $\F_{\rg,1} - \F_{\rg,2}$ and using 
\eqref{uniq_estim} yields $\|\F_{\rg,1}(t)- \F_{\rg,2}(t)\|_{L^2(\Omega)}=0$ for a.a.~$t \in (0,T)$, and hence $\F_{\rg,1} = \F_{\rg,2}$ a.e.~in $(0,T)\times \Omega$. 
Using this in \eqref{uniq_estim} implies 
${\rm sym}(\nabla (\bu_1 - \bu_2) \F_{\rg}^{-1}) =0$ a.e.~in $(0,T)\times \Omega$ and hence, due to \eqref{Korn_macro}, also $\bu_1= \bu_2$ a.e.~in $(0,T)\times \Omega$ and  the uniqueness of  solution of~\eqref{macro_main},\eqref{macro_growth_eq}.  Thus we have that the whole sequence of solutions of the microscopic problem converges to the solution of the macroscopic equations. 
\end{proof} 

\begin{remark}
(i) Using  assumptions on the domain $\Omega$ and regularity of $\mathbb E$, $P_1$, $P_2$ and ${\bf f}$,  estimates \eqref{bound_bu} and~\eqref{bound_macro}, assumed to be true in the rigorous derivation of the macroscopic model,  may possibly be shown considering  approaches similar to~\cite{Nirenberg_59, Nirenberg_64, Mazya_2014, Kenig_2013, Shen_2017}.

(ii) In the derivation of the strong convergence of $\F_{\rg, \delta}$
and $\int_{Y_w} \mathcal T^\delta({\rm sym}(\nabla \bu^\delta \F_{\rg, \delta}^{-1} )) dy$ we assumed the uniform in $\delta$ boundedness \eqref{bound_bu} of the $L^2(Y_w)$-norm of $\mathcal T^\delta (\nabla \bu^\delta)$ in $(0,T)\times \Omega$. 

It is also possible to derive macroscopic equations \eqref{limit_twoscale_1} for $\bu$, by  assuming first the  strong convergence of $\F_{\rg, \delta}$ and then by showing the strong two-scale convergence of ${\rm sym}(\nabla \bu^\delta \F_{\rg, \delta}^{-1})$ deduce the strong convergence of $\F_{\rg, \delta}$. 
To show the strong two-scale convergence of ${\rm sym}(\nabla \bu^\delta \F_{\rg, \delta}^{-1})$, we  consider  $\nabla \bu^\delta$ as a test function in \eqref{var_inequal} and take the limit as $\delta \to 0$. Then  using the lower semi-continuity of the norm, together with the positivity of $J_\rg$ and properties  of $\mathbb E$, yields 
$$
\begin{aligned} 
\frac 1 { |Y|}\Big\langle  J_\rg & \,  \mathbb E(x,y) {\rm sym}((\nabla \bu + \nabla_y \bu_1) \F_{\rg}^{-1}),  {\rm sym}((\nabla \bu + \nabla_y \bu_1) \F_{\rg}^{-1})
\Big\rangle_{\Omega_T, Y_w} \\
& \leq  \lim\limits_{\delta \to 0}\Big\langle  J_\rg^\delta \,  \mathbb E^\delta(x) {\rm sym}(\nabla \bu^\delta \F_{\rg, \delta}^{-1}),  {\rm sym}(\nabla \bu^\delta \F_{\rg, \delta}^{-1}) \Big
\rangle_{\Omega^\delta_T}
\\
& =-\frac 1 { |Y|} \Big\langle  J_\rg\, P_1(t, x) {\bf I} ,  (\nabla \bu + \nabla_y \bu_1) \F_{\rg}^{-1} \Big\rangle_{\Omega_T, Y_w} - \frac 1 { |Y|}  \Big \langle J_{\rg} \, P_2(t,x,y)\, \F_{\rg}^{-T}   N,  \bu \Big \rangle_{\Omega_T, \Gamma} \\
& \quad + \Big \langle J_{\rg} \, P_1(t,x)\, \F_{\rg}^{-T}   N,  \bu \Big \rangle_{(\partial \Omega)_T}  + \frac 1{ |Y|}\Big
\langle J_\rg\,  \mathbb E(x,y) \big[\bI -{\rm sym} (\F_{\rg}^{-1})\big],   (\nabla \bu + \nabla_y \bu_1)  \F_{\rg}^{-1} \Big\rangle_{\Omega_T, Y_w} 
\\
& \quad - \frac 1 {|Y|}\Big
\langle J_\rg\, \F_{\rg}^{-T} \nabla P_1(t,x), \bu \Big\rangle_{\Omega_T, Y_w}
+ \Big\langle  J_\rg\, \bbf(x) \, |\F_{\rg}^{-T} N|, \bu \Big\rangle_{\Gamma_{N,T}}
 \\
 & = \frac 1 { |Y|}\Big\langle  J_\rg \,  \mathbb E(x,y) {\rm sym}((\nabla \bu + \nabla_y \bu_1) \F_{\rg}^{-1}),  {\rm sym}((\nabla \bu + \nabla_y \bu_1) \F_{\rg}^{-1})
\Big\rangle_{\Omega_T, Y_w},
\end{aligned} 
$$
where the last equality follows from \eqref{limit_twoscale_1}  by considering  $\psi = \bu$ and $\psi_1 = \bu_1$. 
This, together with the two-scale convergence of $  \mathbb E^\delta(x) {\rm sym}(\nabla \bu^\delta \F_{\rg, \delta}^{-1})$ and the strong convergence of $J_\rg^\delta$, ensured by the strong convergence of $\F_{\rg, \delta}$, implies the corresponding strong two-scale convergence of ${\rm sym}(\nabla \bu^\delta \F_{\rg, \delta}^{-1})$.  Using now this result in equation \eqref{growth_micro_1} yields the  strong convergence of $\F_{\rg, \delta}$.
\end{remark} 
 
\section{Numerical simulations of microscopic and macroscopic two-scale problems} \label{section_numerics}

To numerically solve the microscopic problem \eqref{micro_model_ref_2}  and \eqref{growth_micro_1} we consider a rectangular tissue of hexagonal cells with geometrical parameters -- wall length $l_1$, $l_2$, wall thickness $w$ and angle $\theta$ -- as shown in the unit cell scheme of Figure~\ref{fig1}a-b. The tissue is composed of  $N_x$ cells along the first axis and $N_x/2$ cell layers along the second axis (see Figure \ref{fig2}d). The bottom cell layer is cut horizontally at mid-height, and we assume no normal displacement and no tangential stress at the cut walls. As long as there is no symmetry breaking, this setting amounts to simulating a square tissue made of $N_x(N_x-1)$ cells. The parameters related to the microscopic mechanical properties of the cell wall are its Young modulus $E$ and its Poisson ratio $\nu$. Regarding growth, we denote for the numerical simulations simply by $\eta$ the extensibility $\eta_\sigma$ or $\eta_\varepsilon$, and simply by ${\boldsymbol\tau}$ the yield threshold tensor ${\boldsymbol\tau}_\sigma$ or ${\boldsymbol\tau}_\varepsilon$ in case of stress based and strain based growth respectively. Furthermore, in all our simulations we consider an isotropic threshold tensor ${\boldsymbol\tau}=\tau{\boldsymbol I}$  with parameter~$\tau$.

For the space-dependent turgor pressure $P_1(x)$ a piece-wise constant approximation  $P^\delta$ of $P_1$ is defined as in~\eqref{form_pressure}, 
 with an appropriate function $P_2$, where $P_2(x,\cdot)$   is   $Y$- periodic.
In the case of a linear function $P_1(x)= \alpha x_1 + \beta$ we have $P^\delta(x) = \beta$ for $x \in [0,\delta)$ and $P^\delta(x) = \beta + \alpha \xi_1\delta$ for $x \in \delta(Y+\xi)$ with $\xi\in \Xi^\delta$ and $P_2(x,y)= - \alpha \hat P(y)$, where $\hat P(y)= y_1$ on $Y$ and $Y$-periodically extended to $\mathbb R^d$.  Notice that for constant $P_1$ we have  $P_2 = 0$. 

In order to solve the microscopic problem at each time-step, we consider its variational formulation (\ref{var_inequal}) and  implement it using the Finite Element Method and the open-source finite element software FreeFEM~\cite{freefem}. The vector components of the displacement fields ${\bf u}$ and test function fields $\varphi$ are represented by P$1$ finite elements, and we use the {\tt sparsesolver} solver of FreeFEM. The mesh is built so that a fixed number of triangle edges is imposed per unit length of domain boundary (in most of the simulations we use 25 triangle edges per cell wall length $l_2$).

The numerical simulations of the macroscopic two-scale problem are based on an algorithm that we schematized in Figure~\ref{fig2}b and detailed in Algorithm \ref{algo_coupled}. Namely, at each time-step, we solve the macroscopic problem (Algorithm \ref{algo_macro}) in order to compute tissue level growth and displacement. To compute the required tissue level material properties, we locally solve the corresponding unit cell problems (Algorithm~\ref{algo_UC}). In doing so, we use two meshes (Figure~\ref{fig2}a) --  a fine mesh $\mathfrak{m}$ for the numerical resolution of the macroscopic problem, and a coarse mesh $\mathfrak{M}$ for the calculation of the  homogenized (macroscopic) properties $\mathbb E_{\rm hom}$ and  $K_{\rm hom}$. For instance for the simulations presented in Figures~\ref{fig2} and~\ref{fig3}, we consider a tissue of width $7$ and  use  $10$ edges per unit length for the fine mesh $\mathfrak{m}$, corresponding to $70$ triangle edges along the width of the tissue. In contrast, the coarse mesh $\mathfrak{M}$ for the same tissue has edge length of~$1$, corresponding to~$7$ triangle edges along the width (see Figure~\ref{fig2}a).

To determine  $\mathbb E_{\rm hom}$ and  $K_{\rm hom}$ for given $\F_{\rg}(t, \hat x)$, where $\hat x$ are the nodal points of the coarse mesh,  we compute numerically  solutions of the `unit cell' problems  \eqref{unit_cell_12} and  \eqref{unit_cell_22},  written  in the weak form as
\begin{eqnarray} 
\int_{Y_{w}} J_\rg  \mathbb E(x, y) \Big[ {\rm sym}(\nabla_y {\bf w}^{ij} \F_\rg^{-1}) +  {\bf b}_{ij} \Big]\nabla_y \phi \, \F_\rg^{-1} dy &=&0 
\qquad \text{ for } \; \; \phi \in H^1_{\rm{per}}(Y), \quad \; i,j=1, \ldots, d, \label{UC_weak} \\
  \int_{Y_{w}}J_{\rg} \Big(  
 \mathbb  E(x,y) {\rm sym}(\nabla_y {\bf v}  \F_\rg^{-1}) +   {\bf I} \Big) \nabla_y \phi \, \F_\rg^{-1} dy& =& 0  \qquad \text{ for } \; \; \phi \in H^1_{\rm{per}}(Y). \label{UCw_weak} 
\end{eqnarray}

We call the solutions ${\bf w}^{ij}$ and ${\bf v}$ of the `unit cell' problems the elementary deformations (see Figure~\ref{fig1}b), as they form the building blocks for the effective material properties of the tissue. If we denote an arbitrary displacement field  by $\bf w$, a generalised gradient and its product with the elastic tensor  by
$$
d[{\bf w}](t,x,y) =  {\rm sym}( \nabla_y {\bf w}(t,x,y)\F_\rg^{-1}(t,x) ) \quad
\text{ and }  \quad 
\mathbb Ed[{\bf w}](t,x,y) = \mathbb E(x,y) {\rm sym}( \nabla_y {\bf w}(t,x,y)\F_\rg^{-1}(t,x)),
$$
and  the mean over the  `unit cell' centered on the node with coordinates $x$ by
$$
\overline{d[{\bf w}]}(t,x) = \frac 1 { |Y|}\int_{Y_{w}} d[{\bf w}](t,x,y)dy \quad 
\text{ and } \quad 
\overline{\mathbb Ed[{\bf w}]}(t,x) = \frac 1 { |Y|}\int_{Y_{w}} \mathbb Ed[{\bf w}](t,x,y)dy,
$$
then the effective material properties \eqref{maacro_E} and \eqref{maacro_K} are computed as
\begin{equation}
\begin{aligned}
& \mathbb E_{{\rm hom}, {ijkl}}
=  \overline{ E_{ijkl}} + (\overline{\mathbb Ed[{\bf w}^{ij}]})_{kl}, \qquad K_{\rm hom} = \overline{\mathbb Ed[{\bf v}]}.
 \end{aligned} 
\end{equation}
On the same nodal points we also compute the components of the vector field
\begin{equation}\label{P2} 
    {\bf P_2}(t,x) = \frac 1{ |Y|} \int_{\Gamma}   P_2(t,x,y) N d\gamma_y.
\end{equation}

Then we interpolate the values for $\mathbb E_{\rm hom}$,  $K_{\rm hom}$ and ${\bf P_2}$ to obtain the corresponding tensor and vector fields defined in the whole domain $\Omega$, so that these can be used in the numerical simulations of the macroscopic equation~\eqref{macro_weak_222}. With $\bu$ the solution of the macroscopic problem at time $t$, we compute the macroscopic strain $\boldsymbol{\ve}_{\rm hom}$ and stress  $\boldsymbol{\sigma}_{\rm hom}$ tensor fields,
\begin{equation}
\label{epssigma_hom}
\begin{aligned}
\boldsymbol{\ve}_{\rm hom}(t,x, \nabla \bu, \F_\rg^{-1}) & =
\overline{\bf 1}(t,x) \boldsymbol{\ve}^{\rm el}(\nabla \bu, \F_\rg^{-1} )
+\sum_{i,j=1}^d\overline{d[{\bf w}^{ij}]} (t,x)\boldsymbol{\ve}^{\rm el}(\nabla \bu, \F_\rg^{-1} )_{ij} + \overline{d[{\bf v}]} (t,x)P_1(t,x)  ,
 \\
 {\boldsymbol{\sigma}_{\rm hom}}(t,x, \nabla \bu, \F_\rg^{-1}) &=\mathbb E_{\rm hom}(t,x)  \boldsymbol{\ve}^{\rm el}(\nabla \bu, \F_\rg )  +  K_{\rm hom}(t,x) P_1(t,x),
 \end{aligned} 
\end{equation}
and the corresponding growth rate at this time step according to the strain or stress hypothesis \eqref{G_strain} or \eqref{G_stress}. Finally we use the Euler method to compute the growth tensor $\F_{\rg}$ for the next time-step according to equation~\eqref{macro_growth_eq}.

\begin{algorithm}[H]\label{algo_coupled}
	\SetKwInOut{Input}{Inputs}
	\SetKwInOut{Output}{Output}
	\Input{reference tissue geometry,\\
	    \ $P_1$, $E$, $\nu$, $\eta$, $\tau$ functions defined on the tissue domain\\
	    time step $dt$ and maximal time $t_{max}$}
	\Output{$\F_g(t)$ field for all time points $t$ in $[0,t_{max}]$,\\
	macroscopic displacement field ${\bf u}(t)$ for all time points $t$ in $[0,t_{max}]$.}
	\BlankLine
	Construct fine mesh $\mathfrak{m}$ and coarse mesh $\mathfrak{M}$ of the tissue;\\
	Initialize $t=0$;\\
	Initialize $\F_g= {\bf I}$ on $\mathfrak{M}$;\\
	Evaluate the functions $E$, $\nu$ on each element of the coarse mesh $\mathfrak{M}$\\
	Define FE fields $P_1$, $\eta$ and $\tau$ on the fine mesh $\mathfrak{m}$\\
	\While{$t\leq t_{max}$}
	{
	    \For{all elements of the coarse mesh $\mathfrak{M}$}
	    {
	    Solve the `unit cell' problems  (Algorithm \ref{algo_UC});
		}
		Reconstruct FE fields of the effective properties $\mathbb E_{\rm hom}$, $K_{\rm hom}$ and ${\bf P_2}$ on the coarse mesh $\mathfrak{M}$;\\
		Interpolate and get the FE fields $E_{\rm hom}$, $K_{\rm hom}$ and  ${\bf P_2}$ on the fine mesh $\mathfrak{m}$;\\
		Solve macroscopic problem on $\mathfrak{m}$ (Algorithm \ref{algo_macro});\\
		Compute strain and stress field on $\mathfrak{m}$;\\
	    Compute growth rate depending on strain or stress;\\
	    Compute new growth tensor $\F_g$ on $\mathfrak{m}$;\\
	    Project $\F_g$ on the coarse mesh $\mathfrak{M}$;\\
		Set $t=t+dt$;
	}
	\caption{Coupled simulation}
\end{algorithm}   

\begin{algorithm}[H]\label{algo_UC}
	\SetKwInOut{Input}{Inputs}
	\SetKwInOut{Output}{Output}
	\Input{`unit cell' geometry ($l_1$, $l_2$, $\theta$, $w$),\\
	    cell wall properties ($E$, $\nu$),\\
	    $P_2$, \\
	    growth tensor $\F_g$}
	\Output{effective properties $\mathbb E_{\rm hom}$, $K_{\rm hom}$ and ${\bf P_2}$}
\BlankLine
    Construct `unit cell' mesh;\\
    Solve the `unit cell' problems \eqref{UC_weak} and \eqref{UCw_weak};\\
    Compute the effective properties $\mathbb E_{\rm hom}$ \eqref{maacro_E}, $K_{\rm hom}$ \eqref{maacro_K} and ${\bf P_2}$ \eqref{P2}.
	\caption{`Unit cell' problems}
\end{algorithm}   

\begin{algorithm}[H]\label{algo_macro}
	\SetKwInOut{Input}{Inputs}
	\SetKwInOut{Output}{Output}
	\Input{fine mesh $\mathfrak{m}$ of the tissue,\\
	    pressure field $P_1$ and growth tensor field $\F_g$ on $\mathfrak{m}$\\
	    homogenized property fields $\mathbb E_{\rm hom}$, $K_{\rm hom}$ and ${\bf P_2}$ on $\mathfrak{m}$
	    }
	\Output{macroscopic displacement field ${\bf u}$ on $\mathfrak{m}$}
\BlankLine
	 Solve macroscopic problem \eqref{macro_weak_222}.
	\caption{Macroscopic problem}
\end{algorithm}   

For the `unit cell' problems we implement periodic boundary conditions on $\partial Y$ and the corresponding Neumann boundary conditions on $\Gamma$.
For the macroscopic problem zero  normal displacement   and  no tangential force are imposed on the lower boundary of the tissue, with zero-force conditions on all other boundaries of $\Omega$. The weak formulations of the `unit cell' problems and the macroscopic problem were implemented using FreeFEM. The vector components of the displacement fields are represented by P2 finite elements for the `unit cell' problems, while they are P1 elements for the macroscopic problem. We use the {\tt sparsesolver} solver of FreeFEM to solve both problems.

While the building blocks Algorithms~\ref{algo_UC} and \ref{algo_macro} are  implemented in FreeFEM, the orchestrating Algorithm~\ref{algo_coupled} is implemented in Python. In particular, at each time step we solve a relatively high number of `unit cell' problems that are independent of each other, and we use the {\tt multiprocessing} module of Python to solve them in parallel.

\section{Numerical simulation results}
\label{sec:experiments}

In this section we first present the results of the `unit cell' problems, which allow us to compute the effective material properties, and in particular we analyse how the microscopic geometrical parameters as well as growth affect the tissue level properties.
Then we validate the multiscale, coupled simulation algorithm on homogeneous isotropic tissues and homogeneous pressure comparing its outcome to the simulation of the microscopic model of the same setup.
We also validate the multiscale approach on more general configurations. Namely, we consider gradient fields over the tissue of all the microscopic parameters one by one and, whenever possible, compare the tissue deformation computed using the coupled simulation to the one obtained by simulating the same setup using the microscopic model.

\subsection{The `unit cell' problems and the macroscopic (homogenized) material properties}

\begin{figure}
\begin{center}
\includegraphics[width=0.9\columnwidth]{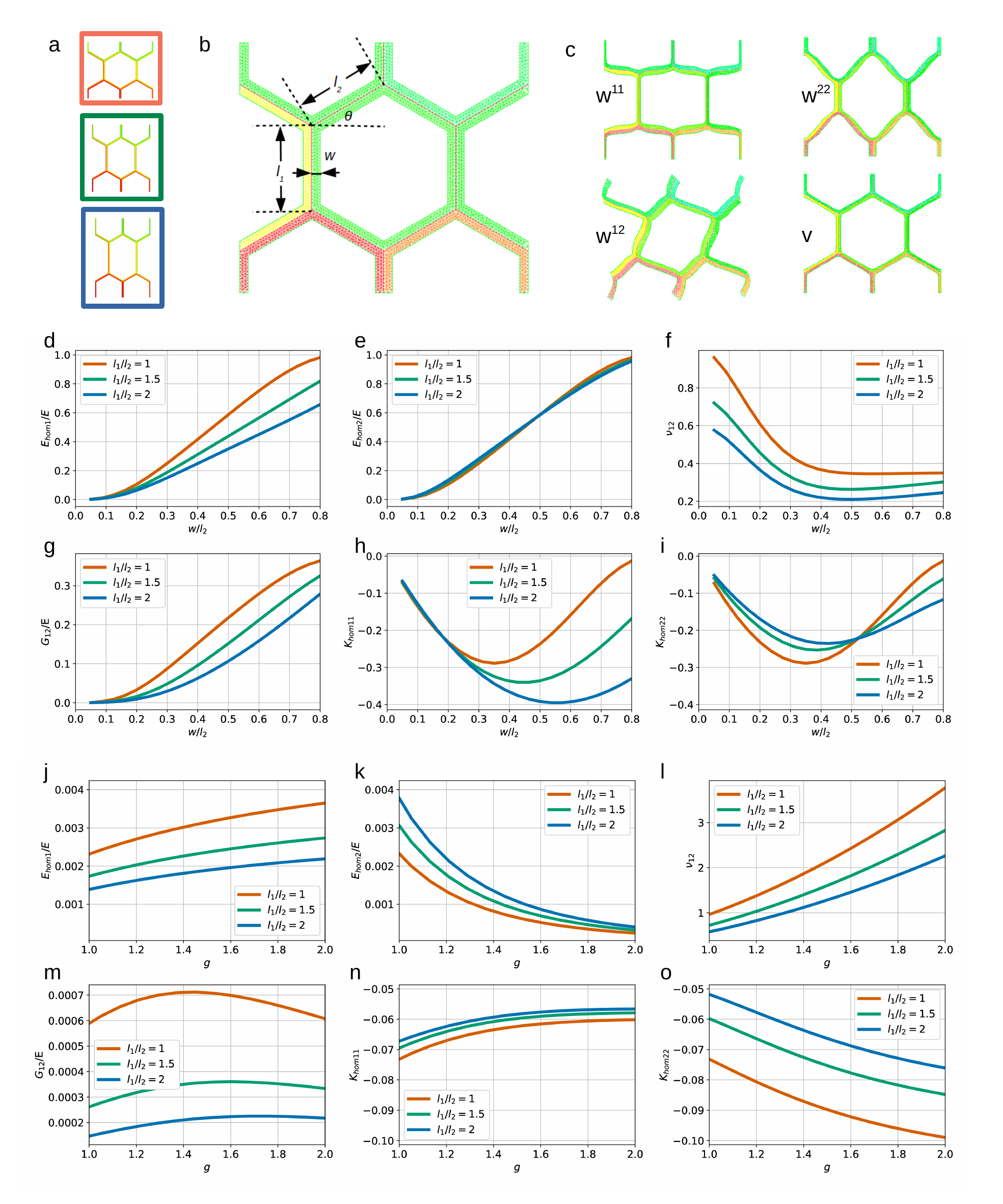}
\end{center}
\caption{\small
{\bf Unit cell problem.} a. `Unit cell' examples: isotropic cell shape $l_1/l_2=1$ (red box), elongated cell shape $l_1/l_2=1.5$ (green box) and $l_1/l_2=2$ (blue box) with $w/l_2=0.05$ and $\theta=30$ for all. The color of the box corresponds to the color of lines in other graphs. b. Geometry of the `unit cell' and definition of the geometrical parameters $l_1$, $l_2$, $w$ and $\theta$;
c. Deformed `unit cell' geometries corresponding to the four elementary solutions of the `unit cell' problems (parameters $E=1$, $\nu=0.35$, $l_1=1$, $l_2=1$, $w=0.1$, $\theta=30$, $\F_g= {\bf I}$);
{\bf d-i. Dependence of homogenized material properties on the relative wall thickness (no growth).} All moduli are normalized by the cell wall Young modulus $E$ and are shown as a function of relative wall thickness $w/l_2$.
d. Modulus along axis 1, $E_{hom1}$; e. Modulus along axis 2, $E_{hom2}$; f. First Poisson's ration $\nu_{12}$; g. Shear modulus, $G_{12}$; 
h-i. Diagonal elements of the effective material property $K_{\rm hom}$. The off-diagonal elements vanish.
{\bf j-o. Dependence of homogenized material properties on the degree of anisotropic growth.}  We consider here only growth along axis 1, with corresponding eigenvalue $g$. All moduli are normalized by the cell wall Young modulus $E$ and are shown as a function of $g$. The relative wall thickness is set  to $w/l_2=0.05$.
j. Modulus along axis 1, $E_{hom1}$; k. Modulus along axis 2, $E_{hom2}$; l. First Poisson's ration $\nu_{12}$; m. Shear modulus, $G_{12}$; 
n-o. Diagonal elements of the effective material property $K_{\rm hom}$. The off-diagonal elements vanish.
}
\label{fig1}
\end{figure}

In the case of non-growing cellular solids, previous studies have computed numerically the homogenized elasticity tensor and compared their results to asymptotic expansions and experimental results, see e.g.~\cite{Malek_2015}. We validated our results by quantitatively comparing the dependence of the effective material properties embedded in $\mathbb E_{\rm hom}$ with the results presented in~\cite{Malek_2015}. Qualitatively, a `unit cell' in the form of a regular hexagon yields an isotropic homogenized medium, see Figure~\ref{fig1}d-e. Elongating the unit cell in the $x_2$-direction only reduces the homogenized modulus in the $x_1$-direction. This is in agreement with the measurements on epidermal onion peels realized using a microextensometer setup \cite{Majda_2022}. The macroscopic (homogenized) Poisson's ratio $\nu_{12}$ is comparatively large for thin walls and is reduced by elongating the unit cell in the $x_2$-direction, see Figure~\ref{fig1}f. The shear modulus turns out to be particularly low for thin walls, and is lower for elongated cells than for regular hexagons, see Figure~\ref{fig1}g. All homogenized properties converge to the microscopic properties when the empty space (cell inside) vanishes, i.e.~$w/l_2\to 1$.

In our multiscale analysis we  introduced a tensorial material property, $K_{\rm hom}$, that accounts for the contribution of cell pressure to the macroscopic (homogenized) stress tensor. For the present choice of coordinate system (which corresponds to symmetry axes of the unit cell), $K_{\rm hom}$ is diagonal; the two diagonal elements of $K_{\rm hom}$ are shown in Figure~\ref{fig1}h-i as  functions of relative thickness. A regular hexagonal unit cell yields an isotropic $K_{\rm hom}$, while an elongated hexagon an anisotropic $K_{\rm hom}$, where the anisotropy increases with wall thickness (as it can be seen from the ratio $K_{\rm hom11}/K_{\rm hom22}$).

In case of isotropic growth, neither $\mathbb E_{\rm hom}$ nor  $K_{\rm hom}$ depends on the degree of the growth. In contrast, when growth is anisotropic, the effective material properties change with growth anisotropy. In order to illustrate this, we show in Figure~\ref{fig1}j-o the macroscopic  (homogenized) material properties as a function of the degree of growth~$g$, when there is only growth along axis 1, for a fixed relative wall thickness. The macroscopic (homogenized) Young's modulus in growth direction increases with $g$, see Figure~\ref{fig1}j, while the perpendicular modulus decreases, see Figure~\ref{fig1}k. Inversely, the absolute value of $K_{\rm hom}$ in growth direction decreases with $g$, see Figure~\ref{fig1}n, while the absolute value of the other element increases, see Figure~\ref{fig1}o. The macroscopic (homogenized) Poisson's ratio $\nu_{12}$ increases with $g$. Finally, the macroscopic (homogenized) shear modulus is non-monotonic.

\subsection{Validation and sensitivity analysis for tissues with homogeneous material properties}\label{section_validation}
\begin{figure}
\begin{center}
\includegraphics[width=0.88\columnwidth]{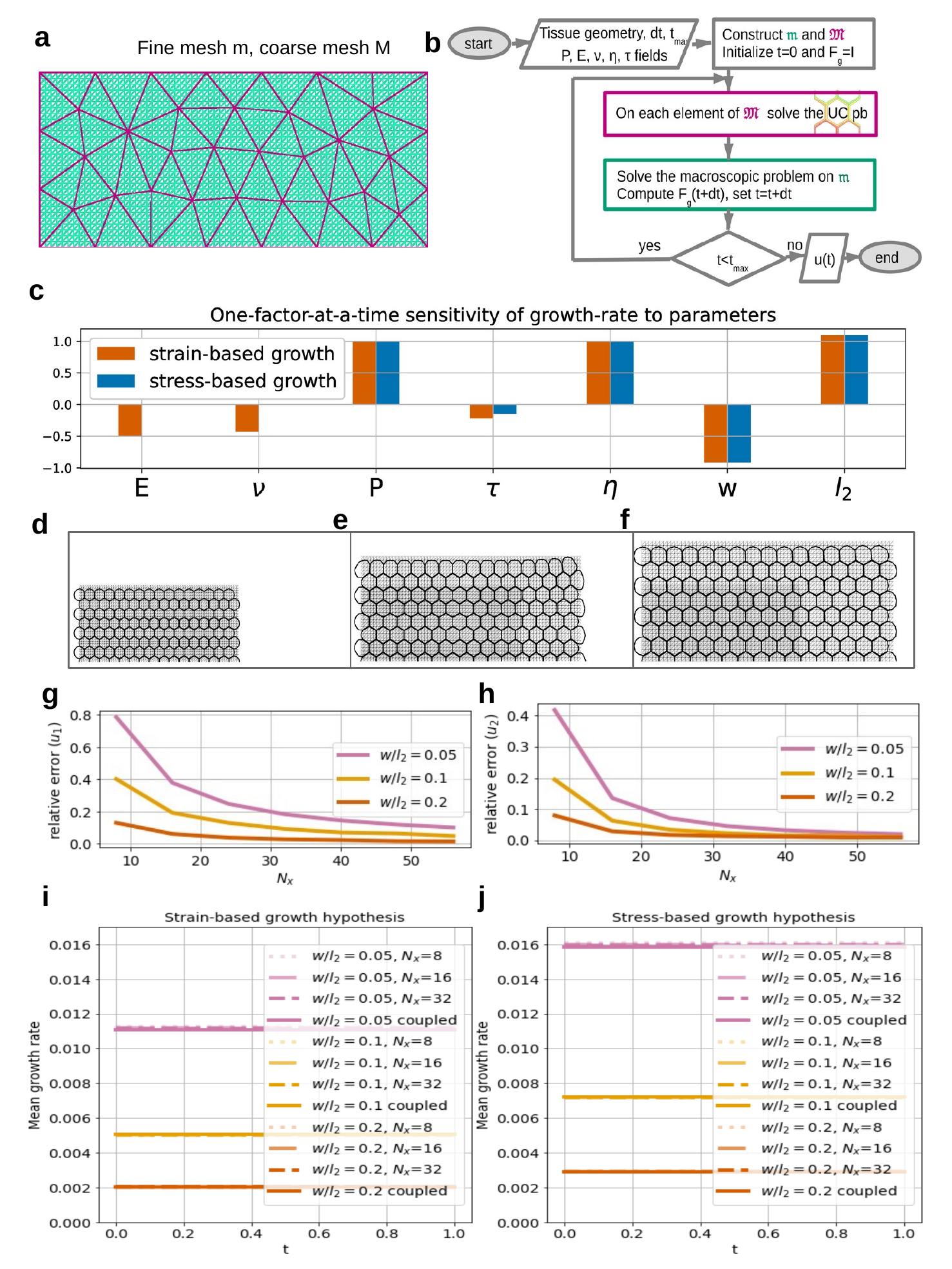}
\end{center}
\caption{\small {\bf The coupled simulation and its validation.} a. The coupled simulation involves in parallel a fine mesh $\mathfrak{m}$ and a coarse mesh $\mathfrak{M}$ of the tissue; b. Simplified diagram of the coupled simulation algorithm, emphasising the usage of the two meshes; c. One-factor-at-a-time sensitivity of growth rate to model parameters around their reference values ($E=1$, $\nu=0.3$, $l_1=l_2=1$, $w=0.05$, $\theta=30$, $\eta=1$, $\tau=0$), for both stress-based and strain-based growth; d-f. Superimposition of the cellular (cell number, $N_x=16$) and continuous representations of tissue at $t=0$  (e) and at $t=60$ using either strain-based (e) or stress-based (f) growth-law -- the smaller shaded area in e-f indicates the initial tissue geometry; g-h. Relative error (difference between coupled simulations and full microscopic model) of the displacement components $u_1$ and $u_2$ as a function of the cell number $N_x$; i-j. Mean growth rate as a function of time, with strain-based (i) or stress-based (j) growth law. The coupled simulation is compared with the microscopic simulation for several values of cell number, $N_x$, and of wall thickness to length ratio, $w/l_2$ -- all but $w/l_2$ parameters are the reference parameters. }
\label{fig2}
\end{figure}

In order to further validate the multiscale method and the macroscopic model derived from the microscopic description of the growth and elastic deformations, we first consider on the one hand a tissue of  $N_x^2/2$, with $N_x=16$, identical regular hexagonal cells with microscopic geometrical parameters the cell wall length $4\delta l_1$, $4\delta l_2$, the cell wall thickness $4\delta w$, and the angle $\theta$, where $\delta = 1/N_x = 1/16$,   $l_1=l_2=1$,  $w=0.05$ and  $\theta=\pi/3$, and on the other hand a continuous tissue of the same size. The cell wall material properties, $E=1$ and $\nu=0.3$, and the pressure, $P_1=0.001$, are homogeneous over the tissue. In Figure~\ref{fig2}d-f, we superimposed the initial states of the microscopic cellular structure and of the continuous tissue, and we visually find that the continuous tissue evolved with the macroscopic  model corresponds well to the cellular tissue evolved with the microscopic model at $t=60$, both for the strain- and the stress-based growth law, see Figure~\ref{fig2}e-f.

We varied the number of cells by scaling their size while keeping tissue dimensions unchanged, and considered three values of relative wall thickness, $w/l_2=0.05$, $0.1$ and $0.2$. We analysed the relative error given by  the relative difference  of solutions obtained from the coupled macroscopic simulations and microscopic simulations, see~\eqref{error} in Appendix. Figures~\ref{fig2}g-h show the relative error  of the displacement components $u_1$ and $u_2$ at the first time step as a function of the number of cells $N_x$. This relative error is higher for thinner walls and it decays with the number of cells.
Accordingly, the values of the growth rate are in agreement between microscopic and coupled macroscopic simulations, see Figures~\ref{fig2}i-j.
The larger error for a tissue of cells with thin walls compared to the error for a tissue
of cells with thick walls for the same number of cells relates to the fact that  the tissue with thick cell walls has smaller voids and is closer in the approximation to the homogeneous tissue than the tissue with thin cell walls.
The mean growth appears constant in time and space,  with thinner walls growing faster. This analysis verifies that the coupled simulation scheme enables the efficient computation of the tissue-scale behavior for a given microscopic cellular geometry and cell wall material properties, and provides a good approximation to the microscopic description of the problem.

In order to better understand the effects of microscopic parameters on the macroscopic behavior we performed a one-factor-at-a-time sensitivity analysis around reference values. We assumed the tissue to be homogeneous and we computed the sensitivity of growth rate with respect to each parameter, as defined by equation~\eqref{sensitivity} in Appendix, see Figure~\ref{fig2}c. As could be expected, growth rate increases with pressure, $P_1$, and extensibility, $\eta$, while it decreases with yield threshold, $\tau$. Growth rate is less sensitive to yield threshold then to pressure and extensibility. Concerning microscopic geometric parameters (cell size, $l_2$, and wall thickness, $w$) growth is promoted by thinner walls if cell size is constant or by larger cells if wall thickness is constant.

Finally, we note a fundamental difference between the two growth models: while the strain-based growth rate decreases with cell wall Young modulus $E$ and Poisson ratio $\nu$, these microscopic material properties have no effect on the stress-based growth.

\subsection{Validation and predictions based on the dynamics of tissues  with heterogeneous  material properties}

\begin{figure}
\begin{center}
\includegraphics[width=0.9\columnwidth]{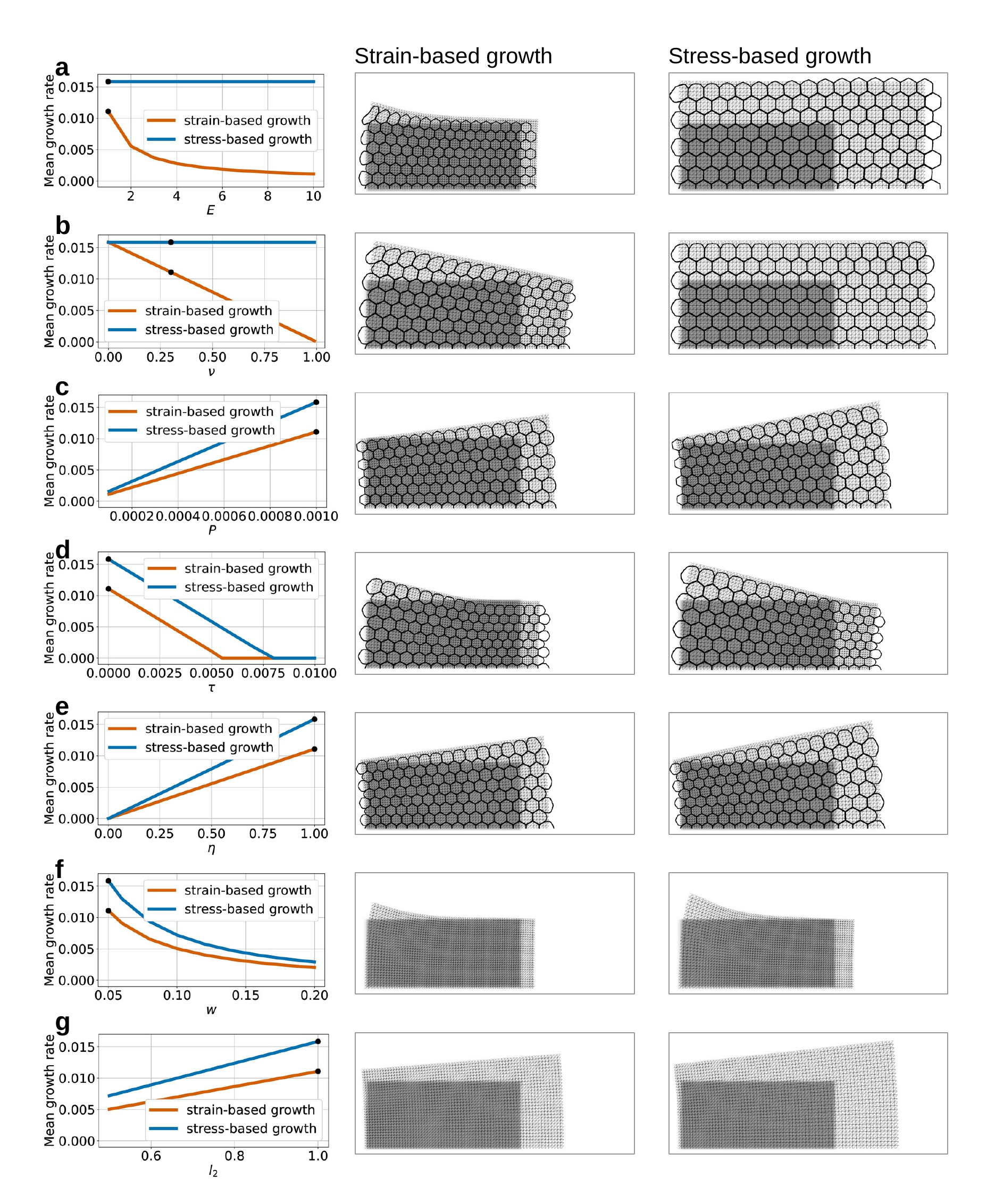}
\end{center}
\caption{\small {\bf Validation and illustration with gradients of material/geometric properties.} First column: One factor-at-a-time parameter scan around the reference model for the homogeneous macroscopic coupled simulation. The results for the reference parameter values are indicated by black dots. Second and third columns: Superimposition of the deformed reference tissues obtained from microscopic (excluding f and g) and from macroscopic coupled simulations, where all parameters are homogeneous, except for the parameter of focus in the row, which varies linearly with first coordinate (in initial geometry) between the minimum and maximum value of the $x$-axis in first column.The shadowed area behind indicates the initial geometry of the tissue. In all simulations $t_{max}=60$, $dt=1$; the number of cells is $N_x=16$ for microscopic simulations. The second and third columns correspond to strain-based and stress-based growth hypotheses, respectively. The parameters varied in each row are: a. Cell-wall Young modulus, $E$; b. Cell-wall Poisson ratio, $\nu$; c. Pressure, $P_1$; d. Strain/stress threshold value, $\tau$; e. Extensibility, $\eta$; f. Wall thickness, $w$; g. Cell size, $l_1=l_2$. }
\label{fig3}
\end{figure}

Now we consider more complex configurations in which material or geometric properties vary spatially, and we use them to further validate the agreement of macroscopic coupled simulations with simulation results for the microscopic model.

Beforehand we perform a parameter scan on larger intervals in the context of homogeneous tissues, and we present in the first column of Figure~\ref{fig3} the growth rate as a function of the parameters for strain- and stress-based growth hypotheses, as predicted by the macroscopic  model. The reference situation is highlighted by black dots on each graph. For several parameters the dependence is affine, so that the sensitivity analysis in Figure~\ref{fig2}c, which was restricted to small variations around the reference values, already captured the essence of the behavior. We find a non-affine dependence of the growth rate on the cell wall Young modulus for strain based growth, and on cell wall thickness for both growth hypotheses. We also note that the results for the strain-based and stress-based growth rates are identical when the Poisson ratio is equal to zero and all other parameters take their reference values, providing an additional check of the numerical simulation codes.

As next,  we consider spatial heterogeneity of each parameter one-by-one, where all other parameters are homogeneous and take the reference value. We consider linear variations of  the parameter along the first axis of the tissue, spanning  the same interval for  which the growth rate variation was presented in the homogeneous context, see the first column of Figure~\ref{fig3}. In the second and third columns of Figure~\ref{fig3} we present the superposition of the corresponding tissues at $t=60$ for the strain-based and stress-based growth hypothesis, respectively. For the  five parameters, $E$, $\nu$, $P_1$, $\tau$, $\eta$, there is a quantitative agreement of tissue shape and size between macroscopic coupled simulations and microscopic simulations. This outcome further validates the multiscale procedure in a context where an additional mechanical stress is induced by spatial differences in growth rate.

Finally, we note that implementing gradients of geometrical properties is not trivial in the microscopic model, whereas such gradients can be easily implemented in the macroscopic coupled simulations. In Figure~\ref{fig3}f-g, we present initial and final tissue shape for gradients in thickness, $w$, and cell size, $l_2$, as predicted by the macroscopic two-scale model.

\section{Conclusions}
\label{sec:conclusions}
We formulated a microscopic model for the growth of plant tissues and derived the corresponding macroscopic equations, when the limit of the ratio between cell size and tissue size $\delta$ tends to zero. Both methods, the formal asymptotic expansion and rigorous two-scale convergence,  are used to revise the macroscopic equations. Passing to the limit in the  nonlinear equations, resulted from the multiplicative decomposition of the deformation gradient into elastic and growth parts,  requires  the   strong convergence of $\{\F_{\rg, \delta}\}$, the proof of which was the main technical step in the analysis. To show the strong convergence for the sequence of solutions of the microscopic problem and uniqueness  for the macroscopic problem  we used  assumptions  on the boundedness of the deformation gradient, which may be shown using the regularity results for solutions of  the linear elasticity equations.   The multiscale analysis performed here will also apply to other multiscale models for the growth of biological tissues.    

The macroscopic two-scale  model comprises the  coupled system of equations of linear elasticity for the displacement  and  nonlinear ordinary differential equations for the growth tensor,   that are additionally  coupled to the  `unit cell' problems describing cell-scale properties and behaviour. This approach notably enables mapping the parameters of microscopic models that account for cell level characteristics to those of the macroscopic models defined on the tissue level, helping to bridge  two relatively separate worlds in current models of plant growth. We implemented both models (microscopic and macroscopic)  numerically and compared them quantitatively. The macroscopic coupled model requires less computational time for its numerical solution and can be readily adapted to describe spatial variations of parameters. Future work will address more complex spatial patterns of properties, such as accounting for variations in material properties across the cell wall, or additional physics, such as feedback from stress on material properties or hydraulics of water movement during growth. More generally, homogenization and the corresponding multiscale models appear as promising approaches to quantitatively describe morphogenesis.


\appendix

\section{Components of the elastic tensor}

The elasticity tensor in two dimensions is implemented as a six-component structure, where every component varies spatially,
$$
\begin{array}{llllllll}
\mathbb E& =&  \{E_0,& E_1,& E_2,& E_3,& E_4,& E_5\} \\
         & =&  \{E_{1111},& E_{2222},& E_{1122},& E_{1212},& E_{1112},& E_{2212}\}\textrm{.}
\end{array}
$$
For an isotropic material in 2 dimensions, or in 3 dimensions with plane stress conditions, the components of the elasticity tensor using Lam\'e's parameters $\lambda$ and $\mu$ are given by
$$
\mathbb E =\{2\mu+\lambda, 2\mu+\lambda, \lambda, \mu, 0, 0\}, 
\quad  \text{ with } \; \;  \mu\ =\ \frac{E}{2(1+\nu)}, \quad 
\lambda\ =\ \frac{E\nu}{1- \nu^2}, 
$$
where $E$  and $\nu$ are  Young's modulus and  Poisson's ratio respectively. 

Given an elasticity tensor $\mathbb E$ in 2 spatial dimensions, for an orthotropic material with symmetry axes along the two Cartesian axes ($E_{1112}=E_{2221}=0$), its material properties, such as  the Young moduli $E_1$ and $E_2$ in the symmetry directions as well as the Poisson ratio $\nu_{12}$ and shear modulus $G_{12}$, can be computed as
$$
 E_1\ =\ E_{1111}-\frac{E_{1122}^2}{E_{2222}},
\quad\quad  E_2\ =\ E_{2222}-\frac{E_{1122}^2}{E_{1111}},
\quad\quad \nu_{12}\ =\ \frac{E_{1122}}{E_{2222}},
\quad\quad  G_{12}\ =\ E_{1212}.
$$

\section{Error and sensitivity definitions}

For  the quantity $a^\delta$ defined in the microscopic domain (tissue) $\Omega^\delta$ and  its corresponding macroscopic quantity $A$ defined in the macroscopic  domain (tissue) $\Omega$, we define  the relative error $e^\delta$  as
\begin{equation}\label{error}
e^\delta(a^\delta, A) = \sqrt{\frac{\int_{\Omega^\delta} (a^\delta-A)^2 dx }{\int_{\Omega^\delta} A^2 dx}}.
\end{equation}
The sensitivity $\phi (X,x)$ around the reference model of an output quantity $X$ to the value of parameter $x$ is defined as
\begin{equation}\label{sensitivity}
\phi(X,x) =\frac{x_0}{X(x_0)}\frac{ \partial X}{\partial x}\Big |_{x=x_0},
\end{equation}
where $x_0$ is the reference value of the input parameter $x$ and $X(x_0)$ is the output value in the reference model (all input parameters are equal to their reference value). The sensitivities $\phi$ are normalized, non-dimensional quantities, and thus sensitivities of the output to different input parameters can be compared.

\section{Two-scale convergence and periodic unfolding operator}\label{appendix_A1}
We recall the definition and some properties of the two-scale convergence and periodic unfolding operator.

\begin{definition}[Two-scale convergence]\cite{Allaire92, Nguetseng89}
A sequence $\{u^\delta\}$ in $L^p(\Omega)$, with $1<p<\infty$,  is  two-scale convergent to $u\in L^p(\Omega\times Y)$ 
if for any $\phi\in L^q(\Omega; C_{\rm per}(Y))$, with $1/p+1/q=1$, 
$$
  \lim_{\delta\to 0}\int_\Omega u^\delta(x)\phi\big(x, x/\delta\big) dx
	= \frac 1{|Y|} \int_{\Omega\times Y} u(x,y)\phi(x,y) dydx.
$$
\end{definition}

\begin{theorem}\cite{Allaire96, NeussRadu96}
Let  $\{v^\delta\} \subset L^2(\Gamma^\delta)$ satisfy $\delta \|v^\delta\|^2_{L^2(\Gamma^\delta)} \leq C$,
then there exists a two-scale limit $v\in L^2(\Omega; L^2(\Gamma))$ such that, up to a subsequence,   
$v^\delta$  two-scale converges to $v \in L^2(\Omega\times \Gamma)$ in the sense that
$$
  \lim_{\delta\to 0} \delta \int_{\Gamma^\delta} v^\delta(x)\phi\big(x,x/\delta\big)d\gamma^\delta
	=  \frac 1{|Y|} \int_{\Omega\times \Gamma} v(x,y)\phi(x,y) d\gamma_ydx, \quad \text{ for any }  \phi\in C_0(\Omega; C_{\rm per}(Y)).
$$
\end{theorem}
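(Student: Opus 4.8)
The plan is to follow the classical functional-analytic argument for two-scale convergence on oscillating hypersurfaces, realising the two-scale limit $v$ as the Riesz representative of the limit of the functionals
$$
\mathcal L^\delta(\phi) \;=\; \delta \int_{\Gamma^\delta} v^\delta(x)\, \phi\big(x, x/\delta\big)\, d\gamma^\delta , \qquad \phi \in \mathcal D := C_0(\Omega; C_{\rm per}(Y)),
$$
where $\mathcal D$, with the supremum norm, is a separable normed space. Everything reduces to a uniform bound on $\mathcal L^\delta$ in an $L^2(\Omega\times\Gamma)$-type norm together with a compactness extraction; an alternative route via the boundary periodic unfolding operator is possible, but I will describe the direct argument.

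The technical heart is the ``oscillating surface integral'' lemma: for every $\psi \in C(\overline\Omega; C_{\rm per}(Y))$,
$$
\lim_{\delta\to 0}\ \delta \int_{\Gamma^\delta} \psi\big(x, x/\delta\big)\, d\gamma^\delta \;=\; \frac{1}{|Y|}\int_\Omega \int_\Gamma \psi(x,y)\, d\gamma_y\, dx .
$$
I would prove this as a co-dimension-one Riemann sum: split $\Gamma^\delta = \bigcup_{\xi\in\Xi_\delta}\delta(\Gamma+\xi)$ plus the part meeting the boundary layer $\Lambda_\delta$; on each interior cell the change of variables $x = \delta(y+\xi)$ gives $d\gamma^\delta = \delta^{d-1}\,d\gamma_y$, hence $\delta\int_{\delta(\Gamma+\xi)}\psi(x,x/\delta)\,d\gamma^\delta = \delta^{d} \int_\Gamma \psi(\delta(y+\xi),y)\,d\gamma_y$; summing over $\xi\in\Xi_\delta$ produces a Riemann sum over cells of volume $\delta^{d}|Y|$ which, by uniform continuity of $\psi$ in its first argument, converges to $|Y|^{-1}\int_\Omega\int_\Gamma\psi\,d\gamma_y\,dx$. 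The boundary-layer term is $\mathcal O\big(L\cdot\delta\cdot\delta^{d-1}|\Gamma|\big)=\mathcal O(\delta)$, using $L=\mathcal O(\delta^{1-d})$ from the construction of $\Omega^\delta$. Applying the lemma with $\psi=|\phi|^2$ and then Cauchy--Schwarz together with the hypothesis $\delta\|v^\delta\|_{L^2(\Gamma^\delta)}^2 \le C$ gives, for all small $\delta$,
$$
|\mathcal L^\delta(\phi)| \;\le\; \big(\delta\|v^\delta\|_{L^2(\Gamma^\delta)}^2\big)^{1/2}\big(\delta\|\phi(\cdot,\cdot/\delta)\|^2_{L^2(\Gamma^\delta)}\big)^{1/2} \;\le\; C^{1/2}\Big(\tfrac{1}{|Y|}\int_\Omega\int_\Gamma|\phi|^2\,d\gamma_y\,dx\Big)^{1/2} + o(1).
$$

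Next, since $\mathcal D$ is separable, a Cantor diagonal argument over a countable dense subset, combined with this uniform bound, yields a subsequence (still denoted $\delta$) along which $\mathcal L^\delta(\phi)\to\mathcal L(\phi)$ for every $\phi\in\mathcal D$. Passing to the limit in the displayed estimate, $\mathcal L$ is linear with $|\mathcal L(\phi)|\le C^{1/2}\big(|Y|^{-1}\int_\Omega\int_\Gamma|\phi|^2\,d\gamma_y\,dx\big)^{1/2}$; as $\mathcal D$ is dense in $L^2(\Omega\times\Gamma)$, it extends uniquely to a bounded linear functional there, and by the Riesz representation theorem there is $v\in L^2(\Omega\times\Gamma)$ (equivalently $v\in L^2(\Omega; L^2(\Gamma))$), with $\|v\|^2_{L^2(\Omega\times\Gamma)}\le C|Y|$, such that $\mathcal L(\phi)=|Y|^{-1}\int_\Omega\int_\Gamma v\,\phi\,d\gamma_y\,dx$ for all $\phi\in\mathcal D$. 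Unwinding the definition of $\mathcal L^\delta$, this is exactly the stated two-scale convergence of $v^\delta$ to $v$ along the subsequence.

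The main obstacle is the oscillating surface integral lemma: making the ``$\delta$-weighted $(d-1)$-dimensional Riemann sum'' rigorous, and in particular checking that the incomplete cells in $\Lambda_\delta$ contribute $o(1)$ — this is precisely where the periodic structure ($\Gamma=\partial Y_c$ Lipschitz, the $\delta^{d-1}$ scaling of surface measure, and the bound $L=\mathcal O(\delta^{1-d})$ on the number of boundary cells) enters. The remaining ingredients — Cauchy--Schwarz, diagonal extraction, density, and Riesz representation — are routine. I would also note at the outset that the conclusion holds only for a subsequence and that $v^\delta$ need only be defined on $\Gamma^\delta$, since test functions are evaluated there.
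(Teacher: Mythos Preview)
Your argument is the standard one and is correct: the oscillating-surface-integral lemma gives the admissibility of test functions, Cauchy--Schwarz plus the hypothesis yields a uniform dual bound, and diagonal extraction together with Riesz representation produces the limit $v\in L^2(\Omega\times\Gamma)$. This is exactly the route taken in the references \cite{Allaire96, NeussRadu96}.

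Note, however, that the paper does not actually prove this theorem: it is stated in the appendix as a cited result from \cite{Allaire96, NeussRadu96}, with no proof given. So there is nothing in the paper to compare your proposal against beyond the citation itself, and your write-up is in line with those sources.
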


\begin{lemma}\label{lem1}
(i)\,  If $\{u^\delta \}$ is bounded in $L^2(\Omega)$, there exists
a subsequence (not relabelled) such that $u^\delta \rightharpoonup u$ two-scale as $\delta\to 0$
for some function $u\in L^2(\Omega\times Y)$. 

(ii) \, If $u^\delta\rightharpoonup u$ weakly in $H^1(\Omega)$ then 
$u^\delta \rightharpoonup u$  and $\nabla u^\delta \rightharpoonup \nabla u+\nabla_y u_1$ two-scale, where 
$u_1\in L^2(\Omega;H^1_{\rm per}(Y)/\mathbb R)$.
\end{lemma}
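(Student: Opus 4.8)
The plan is to follow the classical Nguetseng--Allaire compactness argument.

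For part~(i), I would first record the averaging (mean-value) property for admissible test functions: for $\phi\in L^2(\Omega;C_{\rm per}(Y))$ the map $x\mapsto\phi(x,x/\delta)$ belongs to $L^2(\Omega)$, with $\|\phi(\cdot,\cdot/\delta)\|_{L^2(\Omega)}^2\le\|\phi\|_{L^2(\Omega;C_{\rm per}(Y))}^2$ and
$$
\int_\Omega\big|\phi(x,x/\delta)\big|^2\,dx\ \xrightarrow[\delta\to0]{}\ \frac1{|Y|}\int_\Omega\int_Y|\phi(x,y)|^2\,dy\,dx ,
$$
which follows by approximating $\phi$ in $L^2(\Omega;C_{\rm per}(Y))$ by finite sums $\sum_k a_k(x)b_k(x/\delta)$ with $a_k\in C_0(\Omega)$, $b_k\in C_{\rm per}(Y)$, together with the Riemann--Lebesgue--type limit $\int_\Omega b(x/\delta)\psi(x)\,dx\to|Y|^{-1}\big(\int_Y b\big)\big(\int_\Omega\psi\big)$. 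Next I view $L_\delta(\phi)=\int_\Omega u^\delta(x)\,\phi(x,x/\delta)\,dx$ as a sequence of linear functionals on the separable Banach space $L^2(\Omega;C_{\rm per}(Y))$. Cauchy--Schwarz and the bound $\|u^\delta\|_{L^2(\Omega)}\le C$ give $|L_\delta(\phi)|\le C\,\|\phi\|_{L^2(\Omega;C_{\rm per}(Y))}$ uniformly in $\delta$, so by separability and Banach--Alaoglu some subsequence satisfies $L_\delta(\phi)\to L(\phi)$ for every $\phi$. The averaging property upgrades this to $|L(\phi)|\le C\,|Y|^{-1/2}\|\phi\|_{L^2(\Omega\times Y)}$; since $L^2(\Omega;C_{\rm per}(Y))$ is dense in $L^2(\Omega\times Y)$, $L$ extends to a bounded functional on this Hilbert space, and the Riesz representation theorem yields $u\in L^2(\Omega\times Y)$ with $L(\phi)=|Y|^{-1}\int_{\Omega\times Y}u\,\phi$, i.e.\ $u^\delta\rightharpoonup u$ two-scale.

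For part~(ii), weak convergence $u^\delta\rightharpoonup u$ in $H^1(\Omega)$ makes $\{u^\delta\}$ and $\{\nabla u^\delta\}$ bounded in $L^2$, so (i) applied componentwise gives a further subsequence with $u^\delta\rightharpoonup u_0$ and $\nabla u^\delta\rightharpoonup\xi$ two-scale for some $u_0\in L^2(\Omega\times Y)$ and $\xi\in L^2(\Omega\times Y)^d$. Using that for $\Phi\in C_0^\infty(\Omega;C_{\rm per}^\infty(Y))^d$ one has $\partial_{x_i}\big[\Phi_i(x,x/\delta)\big]=\big({\rm div}_x\Phi+\tfrac1\delta{\rm div}_y\Phi\big)(x,x/\delta)$, integration by parts gives
$$
\int_\Omega\delta\,\nabla u^\delta\cdot\Phi(x,x/\delta)\,dx=-\int_\Omega u^\delta\,\big[\delta\,{\rm div}_x\Phi+{\rm div}_y\Phi\big](x,x/\delta)\,dx ;
$$
letting $\delta\to0$, the left side vanishes (since $\delta\nabla u^\delta\to0$ strongly in $L^2$), so $\int_{\Omega\times Y}u_0\,{\rm div}_y\Phi=0$, i.e.\ $\nabla_y u_0=0$; comparing the two-scale limit of $u^\delta$ against $y$-independent test functions with its weak $L^2$ limit then identifies $u_0=u$. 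Restricting now to $\Phi$ with ${\rm div}_y\Phi\equiv0$, so that $\partial_{x_i}\big[\Phi_i(x,x/\delta)\big]=({\rm div}_x\Phi)(x,x/\delta)$, integration by parts and passage to the two-scale limit, followed by an integration by parts in $x$ (using that $u$ is $y$-independent), yield
$$
\int_{\Omega\times Y}\big(\xi(x,y)-\nabla u(x)\big)\cdot\Phi(x,y)\,dy\,dx=0
$$
for every $y$-divergence-free $\Phi$.

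Finally, I invoke the orthogonal decomposition of $L^2(\Omega\times Y)^d$ into the subspace of $y$-divergence-free fields and its orthogonal complement $\{\nabla_y g:g\in L^2(\Omega;H^1_{\rm per}(Y))\}$. Since $\nabla u$ (constant in $y$) is itself $y$-divergence-free, the orthogonality relation forces the divergence-free part of $\xi$ to equal $\nabla u$, hence $\xi(x,y)=\nabla u(x)+\nabla_y u_1(x,y)$ with $u_1\in L^2(\Omega;H^1_{\rm per}(Y))$, unique up to an $x$-dependent additive constant, i.e.\ $u_1\in L^2(\Omega;H^1_{\rm per}(Y)/\mathbb R)$. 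The only genuinely delicate point is the joint measurability of $u_1$ in $(x,y)$, which I would settle by taking $u_1$ to be the Riesz representative in $L^2(\Omega;H^1_{\rm per}(Y)/\mathbb R)$ of the bounded functional $g\mapsto\int_{\Omega\times Y}(\xi-\nabla u)\cdot\nabla_y g$, or equivalently by solving, for a.e.\ $x$, the cell problem $\Delta_y u_1(x,\cdot)={\rm div}_y\xi(x,\cdot)$ and using measurable dependence of the solution operator on $x$. Everything else is routine; the averaging property in part~(i) and this measurability step are the places where care is needed.
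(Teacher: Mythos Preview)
Your argument is correct and is precisely the classical Nguetseng--Allaire compactness proof. Note, however, that the paper does not supply its own proof of this lemma: it is stated in the appendix as a recalled result from the two-scale convergence literature (Allaire~1992, Nguetseng~1989), so there is no in-paper proof to compare against---your write-up simply fills in the standard details that the paper omits by citation.
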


To define the periodic  unfolding operator, let $[z]_Y$ for any $z\in\mathbb R^d$ denote the
unique  combination $\sum_{i=1}^d k_i b_i$ with $k\in\mathbb Z^d$,
such that $z-[z]_Y\in Y$,  see e.g.~\cite{CDDGZ, CDG_book}

\begin{definition}\label{unfold}
Let $p\in[1,\infty]$ and $\phi\in L^p(\Omega)$.  The unfolding 
operator $\T^\delta$ is defined by 
$$
  \T^\delta(\phi)(x,y) = \begin{cases}  \phi\big(\delta \big[x/\delta\big]_Y + \delta y\big)
	\quad & \text{for a.e. }(x,y) \in ( \Omega\setminus \Lambda_\delta ) \times Y, \\
	0 & \text{for a.e. } x\in \Lambda_\delta, \;   y \in Y,
	\end{cases} 
 \quad \text{ and } \;  \T^\delta(\phi)\in L^p(\Omega\times Y).
$$
For $\psi\in L^p(\Gamma^\delta)$ the boundary unfolding operator $\T^\delta_{\Gamma}$ is defined by
$$
  \T^\delta_{\Gamma}(\psi)(x,y) = \begin{cases} \psi\big(\delta \big[x/\delta\big]_Y + \delta y\big)  \quad & \text{for a.e. }(x,y) \in  (\Omega\setminus \Lambda_\delta)  \times \Gamma, \\ 0 & \text{for a.e. } x\in \Lambda_\delta,  \;  y \in \Gamma,
\end{cases}  \quad 
 \text{ and } \; \T^\delta_{\Gamma}(\psi) \in L^p(\Omega\times \Gamma).
$$
For $\psi\in L^p(\Omega^\delta)$  the unfolding operator
$\T^\delta_{Y_w}$ is defined  in \eqref{unfolding_Yw}.
\end{definition}
Notice that in the main text we use the same notation $\mathcal T^\delta$ for all three types of  unfolding operator.


\begin{proposition}[\cite{CDG}]\label{prop.unfold}
Let $\{\psi^\delta\}$ be a bounded sequence in $L^p(\Omega)$ for some $1<p<\infty$.   
Then the following assertions are equivalent:\\
(i) $\{\T^\delta (\psi^\delta)\}$ converges weakly to $\psi$ in
 $L^p(\Omega\times Y)$; 
(ii)  $\{\psi^\delta\}$ converges two-scale to $\psi$, $\psi \in L^p(\Omega\times Y)$. 
\end{proposition}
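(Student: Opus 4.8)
The plan is to prove the two implications by exploiting two classical properties of the unfolding operator: the \emph{exact integration formula} and the \emph{unfolding criterion for products}. First I would recall that for $\phi\in L^1(\Omega)$ one has $\frac1{|Y|}\int_{\Omega\times Y}\T^\delta(\phi)\,dy\,dx=\int_{\Omega\setminus\Lambda_\delta}\phi\,dx$, and since $\T^\delta$ is multiplicative, $\T^\delta(\phi\varphi)=\T^\delta(\phi)\,\T^\delta(\varphi)$, this upgrades, for $\phi\in L^p(\Omega)$ and $\varphi\in L^q(\Omega)$ with $1/p+1/q=1$, to
\[
\frac1{|Y|}\int_{\Omega\times Y}\T^\delta(\phi)\,\T^\delta(\varphi)\,dy\,dx=\int_{\Omega\setminus\Lambda_\delta}\phi\,\varphi\,dx .
\]
Applying the same identity to $|\phi|^p$ and using $\T^\delta(|\phi|^p)=|\T^\delta(\phi)|^p$ gives the norm identity $\|\T^\delta(\phi)\|_{L^p(\Omega\times Y)}^p=|Y|\,\|\phi\|_{L^p(\Omega\setminus\Lambda_\delta)}^p$, so that the boundedness of $\{\psi^\delta\}$ in $L^p(\Omega)$ yields the boundedness of $\{\T^\delta(\psi^\delta)\}$ in $L^p(\Omega\times Y)$.

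The second ingredient is: for $\varphi\in L^q(\Omega;C_{\rm per}(Y))$, writing $\varphi_\delta(x)=\varphi(x,x/\delta)$, one has $\T^\delta(\varphi_\delta)\to\varphi$ strongly in $L^q(\Omega\times Y)$. I would prove this first for $\varphi\in C^\infty_0(\Omega;C^\infty_{\rm per}(Y))$: on $(\Omega\setminus\Lambda_\delta)\times Y$ one has $\T^\delta(\varphi_\delta)(x,y)=\varphi(\delta[x/\delta]_Y+\delta y,\,y)$, and since $\delta[x/\delta]_Y+\delta y\to x$ uniformly while $\varphi$ is uniformly continuous in its first argument (uniformly in $y$), the convergence is uniform on $(\Omega\setminus\Lambda_\delta)\times Y$; combined with $|\Lambda_\delta|\to0$ this gives the $L^q$ convergence. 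The general case follows by density of $C^\infty_0(\Omega;C^\infty_{\rm per}(Y))$ in $L^q(\Omega;C_{\rm per}(Y))$ together with the uniform bound on $\T^\delta$.

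Granting these, the implication $(i)\Rightarrow(ii)$ is direct. For $\varphi\in L^q(\Omega;C_{\rm per}(Y))$ split $\int_\Omega\psi^\delta\varphi_\delta\,dx=\int_{\Omega\setminus\Lambda_\delta}\psi^\delta\varphi_\delta\,dx+\int_{\Lambda_\delta}\psi^\delta\varphi_\delta\,dx$. The second term is bounded by $\|\psi^\delta\|_{L^p(\Omega)}\big(\int_{\Lambda_\delta}\|\varphi(x,\cdot)\|_{C(Y)}^q\,dx\big)^{1/q}\to0$ by absolute continuity of the integral and the uniform bound on $\{\psi^\delta\}$, while the first term equals $\frac1{|Y|}\int_{\Omega\times Y}\T^\delta(\psi^\delta)\,\T^\delta(\varphi_\delta)\,dy\,dx$, which converges to $\frac1{|Y|}\int_{\Omega\times Y}\psi\,\varphi\,dy\,dx$ by pairing the weak convergence $\T^\delta(\psi^\delta)\rightharpoonup\psi$ with the strong convergence $\T^\delta(\varphi_\delta)\to\varphi$; this is precisely two-scale convergence of $\{\psi^\delta\}$ to $\psi$. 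For $(ii)\Rightarrow(i)$, since $\{\T^\delta(\psi^\delta)\}$ is bounded in the reflexive space $L^p(\Omega\times Y)$, from any subsequence extract a further subsequence with $\T^{\delta'}(\psi^{\delta'})\rightharpoonup\chi$; by the implication just proved along $\delta'$, $\{\psi^{\delta'}\}$ two-scale converges to $\chi$, and uniqueness of the two-scale limit forces $\chi=\psi$. As the limit is the same along every such subsequence, the whole sequence $\T^\delta(\psi^\delta)$ converges weakly to $\psi$ in $L^p(\Omega\times Y)$.

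The step I expect to demand the most care is the unfolding criterion for products, i.e. the strong convergence $\T^\delta(\varphi_\delta)\to\varphi$: one must treat the boundary layer $\Lambda_\delta$ correctly and run the density argument cleanly, since the regularity assumed ($C_{\rm per}$ in $y$, merely $L^q$ in $x$) is exactly what is needed to make $\varphi_\delta$ measurable and $\T^\delta$-admissible. The remaining pieces — the exact integration identity, the norm bound on $\T^\delta$, and uniqueness of two-scale limits — are immediate from the definition of the unfolding operator and standard two-scale convergence theory.
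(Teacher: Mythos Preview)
Your argument is correct and is essentially the standard proof of this equivalence as given in the reference \cite{CDG} (Cioranescu--Damlamian--Griso). Note, however, that the paper itself does not prove this proposition: it is stated in the appendix with a citation to \cite{CDG} and no proof, since it is a known result from the periodic unfolding literature rather than a contribution of the present work. Your reconstruction---exact integration formula, multiplicativity of $\T^\delta$, the strong convergence $\T^\delta(\varphi_\delta)\to\varphi$ for admissible test functions, and the subsequence/uniqueness argument for the reverse implication---matches the original proof in that reference.
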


We have the following  properties of the periodic unfolding operator and the boundary unfolding operator: 
\begin{equation}\label{relat_T}
\begin{aligned} 
&  \mathcal T^\delta (F(u,v)) = F( \mathcal T^\delta (u), \mathcal T^\delta (v)), \; \quad  \mathcal T^\delta (v(t, x/\delta)) = v(t,y), \;  x\in \Omega^\delta, y \in Y_w \text{ or } y \in \Gamma, \, t \in (0,T),   \\
& \la \mathcal T^\delta (v),  \mathcal T^\delta (u) \ra_{\Omega_T \times Y_w} =|Y|  \la v, u \ra_{\Omega_{T}^\delta} - |Y|\la v, u \ra_{\Lambda_{\delta,T}} ,\; \; \quad
\|\mathcal T^\delta (\phi)\|_{L^p(\Omega_T\times Y_w)} \leq |Y|^{\frac 1 p } \|\phi\|_{L^p(\Omega_{T}^\delta)},  \\
&\|\T^\delta (\psi)\|_{L^p(\Omega_T\times\Gamma)} \leq \delta^{\frac 1 p } |Y|^{\frac 1 p } \|\psi\|_{L^p(\Gamma^\delta_T)} \leq C \big( \|\psi\|_{L^p(\Omega^\delta_{T})} + \delta \|\nabla \psi\|_{L^p(\Omega^\delta_{T})}\big), 
 \end{aligned} 
 \end{equation}
for   $u, v\in L^2((0,T)\times\Omega^\delta)$ or  $u, v\in L^2((0,T)\times\Gamma^\delta)$, $\phi \in L^p((0,T)\times \Omega^\delta)$,
$\psi \in L^p(0,T; W^{1,p}(\Omega^\delta))$  and $F$ is any linear or nonlinear function, see e.g.\ \cite{CDG, CDDGZ, CDG_book}. 

\begin{lemma}[\cite{CDG}]  
(i)  If $\phi \in L^p(\Omega)$, then  $\T^\delta(\phi) \to \phi$ strongly in $L^p(\Omega\times Y)$,  for  $1\leq p < \infty$.

(ii) Let $\{ \psi^\delta\} \subset L^p(\Omega)$,   with $\psi^\delta \to \psi$ strongly in $L^p(\Omega)$, then 
$\T^\delta(\psi^\delta) \to \psi$ strongly in $L^p(\Omega\times Y)$. 

\end{lemma}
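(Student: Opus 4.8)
The plan is to deduce both statements from the classical density argument for the unfolding operator, whose only inputs are the elementary bound $\|\mathcal{T}^\delta(\phi)\|_{L^p(\Omega\times Y)} = |Y|^{1/p}\|\phi\|_{L^p(\Omega\setminus\Lambda_\delta)} \le |Y|^{1/p}\|\phi\|_{L^p(\Omega)}$ (the analogue for $\mathcal{T}^\delta\colon L^p(\Omega)\to L^p(\Omega\times Y)$ of the estimate recorded in~\eqref{relat_T}, obtained cellwise because $\mathcal{T}^\delta(\phi)$ is constant in $x$ on each $\delta(\xi+Y)$), the linearity of $\mathcal{T}^\delta$, and the fact that the boundary layer is negligible, $|\Lambda_\delta| = \mathcal{O}(\delta) \to 0$ (since $\Lambda_\delta$ is a union of $L = \mathcal{O}(\delta^{-(d-1)})$ pieces, each contained in a cell of volume $\mathcal{O}(\delta^d)$, as fixed in Section~\ref{section_micro}). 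Throughout, $\phi\in L^p(\Omega)$ is identified with the element $(x,y)\mapsto\phi(x)$ of $L^p(\Omega\times Y)$.

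For part~(i) I would first treat $\phi\in C_c(\Omega)$, which is dense in $L^p(\Omega)$ for $1\le p<\infty$. Since $x-\delta[x/\delta]_Y\in\delta\overline{Y}$, for $(x,y)\in(\Omega\setminus\Lambda_\delta)\times Y$ one has $|\delta[x/\delta]_Y+\delta y - x|\le c_Y\delta$ with $c_Y$ depending only on $Y$, hence $|\mathcal{T}^\delta(\phi)(x,y)-\phi(x)|\le\omega_\phi(c_Y\delta)$, where $\omega_\phi$ is the modulus of continuity of $\phi$. On $\Lambda_\delta\times Y$, where $\mathcal{T}^\delta(\phi)=0$, the integrand is $|\phi(x)|^p$ and $\int_{\Lambda_\delta}|\phi|^p\to 0$ by absolute continuity of the Lebesgue integral. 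Adding the two contributions gives $\|\mathcal{T}^\delta(\phi)-\phi\|_{L^p(\Omega\times Y)}^p\le |\Omega|\,|Y|\,\omega_\phi(c_Y\delta)^p + |Y|\,\|\phi\|_{L^p(\Lambda_\delta)}^p\to 0$. For general $\phi\in L^p(\Omega)$ I would fix $\varepsilon>0$, pick $\phi_\varepsilon\in C_c(\Omega)$ with $\|\phi-\phi_\varepsilon\|_{L^p(\Omega)}<\varepsilon$, and split $\mathcal{T}^\delta(\phi)-\phi = \mathcal{T}^\delta(\phi-\phi_\varepsilon) + \big(\mathcal{T}^\delta(\phi_\varepsilon)-\phi_\varepsilon\big) + (\phi_\varepsilon-\phi)$; the first and third terms have $L^p(\Omega\times Y)$-norm at most $|Y|^{1/p}\varepsilon$ by the norm bound and linearity, the middle term tends to $0$ as $\delta\to0$ by the continuous case, and $\varepsilon$ is arbitrary.

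Part~(ii) then follows at once: writing $\mathcal{T}^\delta(\psi^\delta)-\psi = \mathcal{T}^\delta(\psi^\delta-\psi) + \big(\mathcal{T}^\delta(\psi)-\psi\big)$, the first term is bounded in $L^p(\Omega\times Y)$ by $|Y|^{1/p}\|\psi^\delta-\psi\|_{L^p(\Omega)}\to 0$ by hypothesis, while the second tends to $0$ by part~(i). There is no genuine obstacle here; the only points needing a line of justification are the geometric estimates $|\delta[x/\delta]_Y+\delta y - x|\le c_Y\delta$ and $|\Lambda_\delta|\to 0$, both immediate from the periodic setup of Section~\ref{section_micro}. Alternatively one may simply invoke~\cite{CDG, CDDGZ, CDG_book}, where this statement is proved in precisely this form.
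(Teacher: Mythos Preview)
Your proof is correct and follows exactly the standard density argument for the unfolding operator; the paper itself does not prove this lemma but merely cites it from~\cite{CDG}, where the proof proceeds along the same lines you outline. There is nothing to compare: your argument is the canonical one.
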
 


\section*{Numerical simulation codes}
Numerical simulation codes for both microscopic model \eqref{growth_micro_1}, \eqref{micro_model_ref_2} and macroscopic model \eqref{macro_main}, \eqref{macro_growth_eq} can be found under \text{https://gitlab.inria.fr/akiss1/planthom}.

\section*{Acknowledgments}
We would like to thank the Newton Institute of Mathematical Sciences (INI) and International Centre for Mathematical Sciences (ICMS) for organising the research programme `Growth, form and self-organisation' and the workshop `Growth, form and self-organisation in living systems'  at which this work was started.
We also acknowledge the support of the Centre Blaise Pascal's IT test platform operated with SIDUS \cite{CBP} at ENS de Lyon for the possibility to test our code on the center's computers.

\bibliographystyle{siamplain}
\bibliography{Master_References}
\end{document}